\newtheorem{theor}{Theorem}
\newtheorem{corol}[theor]{Corollary}
\newtheorem{thm}{Theorem}[section]
\newtheorem{lem}[thm]{Lemma}
\newtheorem{cor}[thm]{Corollary}
\newtheorem{prop}[thm]{Proposition}
\theoremstyle{definition}
\newtheorem{defn}[thm]{Definition}
\newtheorem{rem}[thm]{Remark}
\newtheorem{prop-defn}[thm]{Proposition-Definition}
\newtheorem{claim}{Claim}
\newtheorem*{claim*}{Claim}
\newtheorem*{ack*}{Acknowledgements}
\title{Generalized north-south dynamics on the space of geodesic currents}
\author{Caglar Uyanik}
\address{\tt Department of Mathematics, University of Illinois at
 Urbana-Champaign, 1409 West Green Street, Urbana, IL 61801, USA
\newline http://www.math.uiuc.edu/\~{}cuyanik2/} \email{\tt cuyanik2@illinois.edu}
\begin{document}
	
\begin{abstract} We prove uniform north-south dynamics type results for the action of $\varphi\in Out(F_{N})$ on the space of projectivized geodesic currents $\mathbb{P}Curr(S)=\mathbb{P}Curr(F_N)$, where $\varphi$ is induced by a pseudo-Anosov homeomorphism on a compact surface S with boundary such that $\pi_{1}(S)=F_{N}$. As an application, we show that for a subgroup $H\le Out(F_N)$, containing a fully irreducible (iwip) element, either $H$ contains a hyperbolic iwip or $H$ is contained in the image in $Out(F_N)$ of the mapping class group of a surface with a single boundary component.
\end{abstract}

\thanks{ The author is partially supported by the NSF grants of Ilya Kapovich (DMS-0904200) and Christopher J. Leininger (DMS-1207183)}

\subjclass[2010]{Primary 20F65, Secondary 57M, 37D}

\maketitle

\tableofcontents
	
\section{Introduction}

A well known result of Thurston \cite{Th} states that, if $S_{g}$ is a hyperbolic surface and $f\in Mod(S)$ is a pseudo-Anosov homeomorphism, then $f$ acts on the space of \emph{projective measured laminations} $\mathbb{P}\mathcal{M}\mathcal{L}(S)$ with north-south dynamics. In other words, there are two fixed points of this action, $[\mu_{+}]$ and $[\mu_{-}]$ called stable and unstable laminations, and any point $[\mu]\in \mathbb{P}\mathcal{M}\mathcal{L}(S)$ other than $[\mu_{-}]$ and $[\mu_{+}]$ converges to $[\mu_{+}]$ under positive iterates of $f$, and converges to $[\mu_{-}]$ under negative iterates of $f$. Moreover, there is a constant $\lambda>1$, called the \emph{dilatation}, such that $f\mu_{+}=\lambda\mu_{+}$ and $f^{-1}\mu_{-}=\frac{1}{\lambda}\mu_{-}$.  In fact, this convergence is uniform on compact sets by work of Ivanov \cite{Iva}.

A measured lamination $\mu\in\mathcal{M}\mathcal{L}(S)$ has two alternative interpretations:
\begin{enumerate}
\item as a geodesic current on $S$; see \cite{Bo86,Bo88},
\item as dual to an $\mathbb{R}$-tree; see for example \cite{Kap}, 
\end{enumerate}
both of these are very useful in terms of understanding mapping class groups.  

Let $F_{N}$ be a free group of rank $N\ge2$. The outer automorphism group of $F_{N}$, $Out(F_{N})$, is the quotient group $Aut(F_{N})/Inn(F_N)$. The group $Out(F_{N})$ is a close relative to $Mod(S)$ in the sense that tools for analyzing mapping class groups have similar counterparts in the study of elements of $Out(F_{N})$. The first such tool is the \emph{ Culler-Vogtmann's Outer Space \cite{CV}}, $cv_N$, which is the space of marked metric graphs or equivalently the space of minimal, free, discrete isometric actions of $F_{N}$ on $\mathbb{R}$-trees. This space $cv_N$ and its projectivization $CV_N$, obtained as the quotient by the action of $\mathbb{R}_+$, acting by scaling the metrics, both have natural closures $\overline{cv}_N$ and $\overline{CV}_N$ with respect to \emph{Gromov-Hausdorff} topology. We explain this in detail in Section \ref{2.1}. 

Another such tool is the space of geodesic currents on $F_{N}$, denoted by $Curr(F_N)$, which is the space of locally finite Borel measures on $\partial^{2}F_{N}$ which are $F_{N}$ invariant and \emph{flip} invariant. The space of \emph{projectivized geodesic currents}, denoted by $\mathbb{P}Curr(F_{N})$ is the quotient of $Curr(F_{N})$, where two currents are equivalent if they are positive scalar multiples of each other. 

The main $Out(F_{N})$ analog of a pseudo-Anosov mapping class is a \emph{fully irreducible} element, which is also called an \emph{irreducible with irreducible powers (iwip)}. An iwip $\varphi\in Out(F_{N})$ is an element such that no positive power of $\varphi$ fixes the conjugacy class of a proper free factor of $F_{N}$. There are two types of iwips both of which are equally important. An iwip $\varphi\in Out(F_{N})$ is called \emph{hyperbolic} or \emph{atoroidal} if it has no nontrivial periodic conjugacy classes. It is known \cite{BF, Brink} that an iwip is hyperbolic if and only if $G=F_{N}\rtimes_\varphi\mathbb{Z}$ is word-hyperbolic. An iwip $\varphi$ is called \emph{geometric} or \emph{non-atoroidal} otherwise. The reason for the name is a  theorem of Bestvina-Handel \cite{BH92}, saying that $\varphi\in Out(F_N)$ is a non-atoroidal iwip if and only if $\varphi$ is induced by a pseudo-Anosov homeomorphism of a surface $S$ with one boundary component and $\pi_1(S)\cong F_N$.

Thurston's north south dynamics result on $\mathbb{P}\mathcal{M}\mathcal{L}(S)$ has several different generalizations in the $Out(F_{N})$ context. The first such generalization is due to Levitt and Lustig. In \cite{LL} they show that for an iwip $\varphi\in Out(F_{N})$, its action on the compactified outer space $\overline{CV}_N$ has exactly two fixed points, $[T_{+}]$ and $[T_{-}]$. For any other point $[T]\neq[T_{\pm}]$ we have $\lim_{n\to\infty}[T]\varphi^{n}=[T_{+}]$ and $\lim_{n\to\infty}[T]\varphi^{-n}=[T_{-}]$. Moreover, this convergence is uniform on compact subsets $K$ of $\overline{CV}_N$.
  
Reiner Martin, in his unpublished 1995 thesis \cite{Martin}, proves that if $\varphi\in Out(F_{N})$ is a hyperbolic iwip, then $\varphi$ acts on $\mathbb{P}Curr(F_{N})$ with north-south dynamics. We discuss his results after Corollary F below. 

The first main result of this paper deals with the dynamical properties of a pseudo-Anosov map $f\in Mod(S)$ on $\mathbb{P}Curr(S)$ where $S$ is a compact, hyperbolic surface with $b\ge1$ boundary components $\alpha_1,\alpha_2,\dotsc,\alpha_b$. Let $\mu_{\alpha_i}$ be the current corresponding to the boundary curve $\alpha_i$. Here we note that that $\pi_1(S)=F_N$ and $\mathbb{P}Curr(F_N)=\mathbb{P}Curr(S)$. See section \ref{2.1} for details.

Let us define $\Delta, H_{0}(f), H_{1}(f)\subset\mathbb{P}Curr(S)=\mathbb{P}Curr(F_N)$ as follows:
\[
\Delta:=\{[a_1\mu_{\alpha_1}+a_2\mu_{\alpha_2}+\dotsc+a_b\mu_{\alpha_b}]\ |\ a_i\ge0,  \sum_{i=1}^{b}a_i>0 \}.
\]
\[
H_{0}(f):=\{[t_1\mu_{-}+t_2\nu]\ | [\nu]\in\Delta, t_1, t_2\ge0\}\ and\ H_{1}(f):=\{[t'_1\mu_{+}+t'_2\nu]\ |\  [\nu]\in\Delta, t'_1, t'_2\ge0\}.
\]

\begin{theor}\label{A}
Let $f$ be a pseudo-Anosov homeomorphism on $S$. Let $K$ be a compact set in $\mathbb{P}Curr(S)\setminus H_{0}(f)$. Then,
for any open neighborhood $U$ of $[\mu_{+}]$, there exist $m\in\mathbb{N}$ such that $f^{n}(K)\subset U$ for all $n\geq m$. Similarly for a compact set $K'\subset\mathbb{P}Curr(S)\setminus H_{1}(f)$ and an open neighborhood $V$ of $[\mu_{-}]$, there exist $m'\in\mathbb{N}$ such that $f^{-n}(K')\subset V$ for all $n\ge m'$. 
\end{theor}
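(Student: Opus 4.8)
\noindent\emph{Strategy.} I would first reduce to the assertion about positive iterates: the one about $f^{-n}$, $[\mu_{-}]$ and $H_{1}(f)$ follows by applying it to $f^{-1}$, which is again pseudo-Anosov on $S$ with the same boundary, whose stable and unstable foliations are those of $f$ interchanged, so that its attracting current is $[\mu_{-}]$ and the set ``$H_{0}(f^{-1})$'' is exactly $H_{1}(f)$. The engine is the Kapovich--Lustig intersection form $\langle\,\cdot\,,\cdot\,\rangle\colon\overline{cv}_{N}\times Curr(F_{N})\to\mathbb R_{\ge 0}$: jointly continuous, homogeneous in each variable, $Out(F_{N})$--equivariant ($\langle T\varphi,\mu\rangle=\langle T,\varphi\mu\rangle$), strictly positive on $cv_{N}\times(Curr(F_{N})\setminus\{0\})$, with $\langle T,\mu\rangle=0$ iff $\mathrm{supp}(\mu)\subseteq L^{2}(T)$ (the Coulbois--Hilion--Lustig dual lamination), and restricting to the geometric intersection number on surface data. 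Write $\varphi\in Out(F_{N})$ for the class of $f$, let $\lambda>1$ be the dilatation, and let $[T_{+}],[T_{-}]\in\overline{CV}_{N}$ be the attracting/repelling fixed trees of $\varphi$ from Levitt--Lustig \cite{LL}. Since $f$ is geometric, $T_{+}$ and $T_{-}$ are, up to scale, the $\mathbb R$--trees dual to the stable foliation $\mathcal F^{s}$ and unstable foliation $\mathcal F^{u}$; one checks $T_{+}\varphi=\lambda T_{+}$, $\varphi\mu_{+}=\lambda\mu_{+}$ (and reciprocally for $T_{-},\mu_{-}$), $\langle T_{+},\mu_{+}\rangle=i(\mathcal F^{s},\mathcal F^{u})>0$, and $\langle T_{\pm},\mu_{\alpha_{i}}\rangle=0$ for every boundary current.

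The first genuine step is to identify the exceptional sets with vanishing loci of the pairing: $\langle T_{+},\mu\rangle=0\iff[\mu]\in H_{0}(f)$ and $\langle T_{-},\mu\rangle=0\iff[\mu]\in H_{1}(f)$. Indeed $\langle T_{+},\mu\rangle=0$ says $\mathrm{supp}(\mu)\subseteq L^{2}(T_{+})$, which consists of the leaves of $\mathcal F^{s}$, the boundary curves, and the (measure--zero) diagonals of the complementary regions; using unique ergodicity of $\mathcal F^{s}$ and that the only other minimal sublaminations are the $\alpha_{i}$, such a $\mu$ is exactly a nonnegative combination of $\mu_{-}$ and $\mu_{\alpha_{1}},\dots,\mu_{\alpha_{b}}$, i.e.\ $[\mu]\in H_{0}(f)$; symmetrically for $T_{-}$ and $H_{1}(f)$.

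Next, the dynamical core. Fix $T_{0}\in cv_{N}$; I would prove: if $[\mu_{k}]\to[\mu_{\infty}]\in\mathbb P Curr(S)\setminus H_{0}(f)$ and $n_{k}\to\infty$, then $\varphi^{n_{k}}[\mu_{k}]\to[\mu_{+}]$ (the theorem follows: if $f^{n}K\not\subseteq U$ for arbitrarily large $n$, pick $[\mu_{k}]\in K$ with $\varphi^{n_{k}}[\mu_{k}]\notin U$, pass to $[\mu_{k}]\to[\mu_{\infty}]\in K\subseteq\mathbb P Curr(S)\setminus H_{0}(f)$, and contradict). Normalize $\nu_{k}:=\varphi^{n_{k}}\mu_{k}/\langle T_{0},\varphi^{n_{k}}\mu_{k}\rangle$ and let $\nu$ be a subsequential limit (the slice $\{\langle T_{0},\cdot\rangle=1\}$ is compact). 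For $T\in cv_{N}$, $\langle T,\nu_{k}\rangle=\langle T\varphi^{n_{k}},\mu_{k}\rangle/\langle T_{0}\varphi^{n_{k}},\mu_{k}\rangle$; since $[T\varphi^{n_{k}}]\to[T_{+}]$ and $\langle T_{+},\mu_{+}\rangle\neq 0$, rescaling these trees to pair to $1$ with $\mu_{+}$ and using joint continuity together with $\langle T\varphi^{n_{k}},\mu_{+}\rangle=\lambda^{n_{k}}\langle T,\mu_{+}\rangle$ gives $\langle T\varphi^{n_{k}},\mu_{k}\rangle=\lambda^{n_{k}}\langle T,\mu_{+}\rangle\,(\langle T_{+},\mu_{\infty}\rangle/\langle T_{+},\mu_{+}\rangle+o(1))$, and likewise with $T_{0}$ in place of $T$; since $\langle T_{+},\mu_{\infty}\rangle\neq 0$, the powers of $\lambda$ and the common factor cancel, yielding $\langle T,\nu\rangle=\langle T,\mu_{+}\rangle/\langle T_{0},\mu_{+}\rangle$ for all $T\in cv_{N}$, hence all $T\in\overline{cv}_{N}$ by density and continuity. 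Taking $T=T_{-}$ gives $\langle T_{-},\nu\rangle=0$, so $\nu$ is carried by $\mathcal F^{u}\cup\partial S$ and thus $\nu=\kappa\mu_{+}+\sum_{i}b_{i}\mu_{\alpha_{i}}$ with $\kappa,b_{i}\ge 0$; taking $T=T_{+}$ forces $\kappa=1/\langle T_{0},\mu_{+}\rangle>0$, and then a general $T\in cv_{N}$ forces $\sum_{i}b_{i}\langle T,\mu_{\alpha_{i}}\rangle=0$, so each $b_{i}=0$. Hence $\nu=\kappa\mu_{+}$, i.e.\ $[\nu]=[\mu_{+}]$; as every subsequential limit is $[\mu_{+}]$, we are done.

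\textbf{Main obstacle.} I expect the hard part to be the two items that use the surface structure in an essential way: identifying $H_{0}(f),H_{1}(f)$ with the zero loci of $\langle T_{+},\cdot\rangle,\langle T_{-},\cdot\rangle$, and then pinning down the renormalized limit. The boundary currents $\mu_{\alpha_{i}}$ are $\varphi$--periodic but are neither attracted to $[\mu_{+}]$ nor repelled, so one must rule out their presence in $\lim\varphi^{n}[\mu]$ directly --- here via the dual lamination $L^{2}(T_{-})$ and unique ergodicity of $\mathcal F^{u}$ --- and it is precisely this periodic-but-not-hyperbolic behaviour that makes the statement ``generalized'' rather than strict north--south dynamics; in the same vein, the hypothesis $[\mu]\notin H_{0}(f)$ is exactly what keeps $\langle T_{+},\mu_{\infty}\rangle$ (and hence the key ratio) from degenerating.
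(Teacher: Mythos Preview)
Your argument is correct in outline but takes a genuinely different route from the paper, and there is one real gap you should be aware of.

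\textbf{Comparison.} The paper never touches the Kapovich--Lustig pairing or Levitt--Lustig tree dynamics to prove Theorem~A. Instead it works entirely with Bonahon's symmetric intersection form $i(\cdot,\cdot)$ on $Curr(DS)$: first it proves directly, by analysing complementary regions of the stable lamination, that $i(\nu,\mu_{+})=0$ forces $\nu$ to be a nonnegative combination of $\mu_{+}$ and the $\mu_{\alpha_i}$ (Proposition~\ref{Hset}); then it establishes pointwise convergence $f^{n}[\nu]\to[\mu_{+}]$ by a compactness-plus-exclusion argument using Ivanov's lemma for measured laminations; then it pins down the rate $\lambda^{-n}f^{n}(\nu)\to i(\nu,\mu_{-})\mu_{+}$; finally it upgrades to uniform convergence on compacta by an explicit estimate in the Duchin--Leininger--Rafi metric. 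Your proof instead identifies $H_{0},H_{1}$ as the zero loci of $\langle T_{+},\cdot\rangle,\langle T_{-},\cdot\rangle$ via $L^{2}(T_{\pm})$ and unique ergodicity, and then runs a single sequential-compactness argument, using $[T\varphi^{n}]\to[T_{+}]$ to compute all pairings of the renormalized limit. This is cleaner (no metric estimates) and, interestingly, inverts the paper's logical order: what you use as input---the characterisation of $\{\langle T_{+},\mu\rangle=0\}$---the paper proves \emph{after} Theorem~A, as Corollary~C (Theorem~\ref{Erg}).

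\textbf{The gap.} Theorem~A is stated for a compact surface with $b\ge 1$ boundary components. Your engine is the convergence $[T\varphi^{n}]\to[T_{+}]$ for every $T\in cv_{N}$, which you attribute to Levitt--Lustig. But \cite{LL} is a theorem about iwips, and by Bestvina--Handel (Theorem~\ref{BH92}) the outer automorphism induced by a pseudo-Anosov on $S$ is an iwip only when $b=1$; for $b\ge 2$ some boundary curves are primitive and $\varphi$ is reducible. So as written your proof only establishes the $b=1$ case (Theorem~B). For general $b$ you must justify $[T\varphi^{n}]\to[T_{+}]$ by other means---e.g.\ via Thurston/Ivanov on the surface side, since $T_{+}$ is dual to a measured foliation and translation lengths are geometric intersection numbers---or, more simply, replace the pair $(\overline{cv}_N,\langle\cdot,\cdot\rangle)$ in your argument by $(\mathcal{ML}(DS),i(\cdot,\cdot))$: then Lemma~\ref{Ivanov} plays exactly the role you assign to \cite{LL}, Proposition~\ref{Hset} replaces your dual-lamination step, and your sequential-compactness skeleton goes through for all $b$. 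That variant is still structurally different from the paper's explicit DLR-metric estimate, and arguably tidier.
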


Theorem A implies that if $[\nu]\in\mathbb{P}Curr(S)\setminus (H_{0}(f)\cup H_{1}(f))$, then $\lim_{n\to\infty}f^{n}[\nu]=[\mu_{+}]$ and $\lim_{n\to\infty}f^{-n}[\nu]=[\mu_{-}]$. 
Moreover, it is not hard to see that $f$ has \emph{exceptional dynamics} on $H_0(f)\cup H_1(f)$:

\noindent If $[\mu]=[t_1\mu_{+}+t_2\nu]$ where $t_1>0, [\nu]\in\Delta$, then \[
\lim_{n\to\infty}f^n([\mu])=[\mu_{+}]
\]
but 
\[
\lim_{n\to\infty}f^{-n}([\mu])=[t_2\nu].
\]  
If $[\mu]=[t'_1\mu_{-}+t'_2\nu]$ where $t'_1>0, [\nu]\in\Delta$, then
\[
\lim_{n\to\infty}f^{-n}([\mu])=[\mu_{-}]
\]but\[
\lim_{n\to\infty}f^{n}([\mu])=[t_2\nu].
\]
As a particular case of Theorem \ref{A} for surfaces with one boundary component we obtain the following general result about dynamics of non-atoroidal iwips on $\mathbb{P}Curr(F_N)$. 

\begin{theor}\label{B} Let $\varphi\in Out(F_{N})$ be a non-atoroidal iwip. Then the action of $\varphi$ on the space of projectivized geodesic currents, $\mathbb{P}Curr(F_{N})$, has uniform dynamics in the following sense: Given an open neighborhood $U$ of the stable current $[\mu_{+}]$ and a compact set $K_0\subset\mathbb{P}Curr(F_{N})\setminus H_{0}(\varphi)$, there is an integer $M_0>0$ such that for all $n\ge M_0$, $\varphi^{n}(K_0)\subset U$.  Similarly, given an open neighborhood $V$ of the unstable current $[\mu_{-}]$ and a compact set $K_1\subset\mathbb{P}Curr(F_{N})\setminus H_{1}(\varphi)$, there is an integer $M_1>0$ such that for all $m\ge M_1$, $\varphi^{-m}(K_1)\subset V$.
\end{theor}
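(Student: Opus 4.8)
The plan is to deduce Theorem~\ref{B} directly from Theorem~\ref{A}, using only the structure theorem of Bestvina--Handel to reduce to the geometric situation. Since $\varphi\in Out(F_N)$ is a non-atoroidal iwip, \cite{BH92} provides a compact surface $S$ with a \emph{single} boundary component $\alpha_1$, an isomorphism $\pi_1(S)\cong F_N$, and a pseudo-Anosov homeomorphism $f\colon S\to S$ inducing $\varphi$ on $\pi_1(S)\cong F_N$. Under the identification $\mathbb{P}Curr(S)=\mathbb{P}Curr(F_N)$ recalled in Section~\ref{2.1}, the natural action of $Mod(S)$ on $\mathbb{P}Curr(S)$ factors through the homomorphism $Mod(S)\to Out(F_N)$, so the action of $f$ on $\mathbb{P}Curr(S)$ coincides with the action of $\varphi=[f]$ on $\mathbb{P}Curr(F_N)$, independently of the chosen pseudo-Anosov representative.

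Next I would match up the distinguished objects. Via interpretation (1) of the introduction, the stable and unstable measured laminations $\mu_+,\mu_-$ of $f$ determine geodesic currents on $S\cong F_N$, and these are exactly the stable and unstable currents $[\mu_+],[\mu_-]$ of $\varphi$ appearing in Theorem~\ref{B}. Because $S$ has exactly one boundary component, the simplex $\Delta$ from the definitions preceding Theorem~\ref{A} collapses to the single point $\Delta=\{[\mu_{\alpha_1}]\}$, so that
\[
H_0(f)=\{[t_1\mu_-+t_2\mu_{\alpha_1}]\mid t_1,t_2\ge0\},\qquad H_1(f)=\{[t_1\mu_++t_2\mu_{\alpha_1}]\mid t_1,t_2\ge0\}
\]
are precisely the sets $H_0(\varphi)$ and $H_1(\varphi)$ used in the statement of Theorem~\ref{B}.

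With these identifications in place the conclusion is immediate. Given an open neighborhood $U$ of $[\mu_+]$ and a compact set $K_0\subset\mathbb{P}Curr(F_N)\setminus H_0(\varphi)=\mathbb{P}Curr(S)\setminus H_0(f)$, the first half of Theorem~\ref{A} yields $M_0>0$ with $\varphi^n(K_0)=f^n(K_0)\subset U$ for all $n\ge M_0$; the assertion about $\varphi^{-m}(K_1)\subset V$ follows symmetrically from the second half of Theorem~\ref{A}. I do not expect a genuine obstacle here beyond bookkeeping, since all the dynamical content lies in Theorem~\ref{A}; the only steps requiring care are verifying that the surface produced by \cite{BH92} may be taken with one boundary component and $\pi_1\cong F_N$ on the nose (which is exactly the content of that theorem), and checking that the identification $\mathbb{P}Curr(S)=\mathbb{P}Curr(F_N)$ intertwines the two actions and carries $\mu_\pm$ and $\mu_{\alpha_1}$ to the objects named in Theorem~\ref{B} --- both handled by the background material of Section~\ref{2.1}.
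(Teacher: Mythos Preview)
Your proposal is correct and follows essentially the same route as the paper: invoke the Bestvina--Handel theorem to realize the non-atoroidal iwip $\varphi$ as a pseudo-Anosov $f$ on a surface $S$ with a single boundary component and $\pi_1(S)\cong F_N$, identify $\mathbb{P}Curr(S)=\mathbb{P}Curr(F_N)$ together with the $H_i$ sets, and read off the conclusion as the $b=1$ case of Theorem~\ref{A}. The paper records this in one line (defining $H_i(\varphi):=H_i(f)$ and citing Theorem~\ref{NSG}), so your more explicit bookkeeping about intertwining the actions and matching $\mu_\pm,\mu_{\alpha_1}$ is, if anything, slightly more careful than what appears there.
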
 
As a corollary of Theorem \ref{B} we get a unique-ergodicity type statement for non-atoroidal iwips:
\begin{corol}
Let $\varphi\in Out(F_{N})$ be a non-atoroidal iwip. Let $T_{+}$ and $T_{-}$ be representatives of attracting and repelling trees in $\overline{cv}_N$ corresponding to right action of $\varphi$ on $\overline{cv}_N$ respectively. Then 
\[
\left<T_{+},\mu\right>=0 \iff [\mu]=[a_{0}\mu_{-}+b_{0}\mu_{\alpha}]
\] for some $a_{0}\ge0,b_{0}\ge0$, where $\mu_{\alpha}$ is the current corresponding to boundary curve $\alpha$ as in Theorem \ref{B}. Similarly,
\[
\left<T_{-},\mu\right>=0 \iff [\mu]=[a_{1}\mu_{+}+b_{1}\mu_{\alpha}]
\] for some $a_{1}\ge0, b_{1}\ge0$.
\end{corol}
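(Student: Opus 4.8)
The plan is to establish the two displayed equivalences together; the second follows from the first applied to $\varphi^{-1}$, which is again a non-atoroidal iwip with the same peripheral curve $\alpha$, whose attracting tree is $T_{-}$ and whose repelling current is $\mu_{+}$, so that $H_{0}(\varphi^{-1})=\{[a\mu_{+}+b\mu_{\alpha}]:a,b\ge 0\}=H_{1}(\varphi)$. It therefore suffices to prove
\[
\langle T_{+},\mu\rangle=0\iff[\mu]\in H_{0}(\varphi),
\]
where, since $S$ has a single boundary component, $\Delta=\{[\mu_{\alpha}]\}$ and $H_{0}(\varphi)=\{[a\mu_{-}+b\mu_{\alpha}]:a,b\ge 0\}$. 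Throughout I use the standard properties of the intersection pairing $\langle\,\cdot\,,\,\cdot\,\rangle\colon\overline{cv}_{N}\times Curr(F_{N})\to\mathbb{R}_{\ge 0}$: it is continuous, homogeneous and linear in the current variable, specializes to $\langle T,\eta_{g}\rangle=\|g\|_{T}$ on counting currents, and satisfies the equivariance $\langle T\varphi,\mu\rangle=\langle T,\varphi\mu\rangle$. The one consequence of equivariance needed is: if $T\varphi=\lambda T$ and $\varphi\mu=\lambda'\mu$ with $\lambda\neq\lambda'$, then $\langle T,\mu\rangle=0$.

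For $(\Leftarrow)$, by linearity in the current it is enough to show $\langle T_{+},\mu_{-}\rangle=0$ and $\langle T_{+},\mu_{\alpha}\rangle=0$. Since $[T_{+}]$ is the attracting fixed point of the right action of $\varphi$ on $\overline{cv}_{N}$, we have $T_{+}\varphi=\lambda T_{+}$ with $\lambda=\lambda(\varphi)>1$ the dilatation; since $\varphi$ fixes the peripheral conjugacy class, $\varphi\mu_{\alpha}=\mu_{\alpha}$; and since $[\mu_{-}]$ is the attracting fixed point of $\varphi^{-1}$ on $\mathbb{P}Curr(F_{N})$, $\varphi$ scales $\mu_{-}$ by the factor $\lambda^{-1}$. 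As $1\neq\lambda$ and $\lambda^{-1}\neq\lambda$, the observation above yields $\langle T_{+},\mu_{\alpha}\rangle=0$ and $\langle T_{+},\mu_{-}\rangle=0$, hence $\langle T_{+},\mu\rangle=0$ for every $[\mu]=[a\mu_{-}+b\mu_{\alpha}]\in H_{0}(\varphi)$. The same conclusion is visible in the surface picture, where $T_{+}$ is the $\mathbb{R}$-tree dual to the stable foliation $\mathcal{F}^{s}$ of $f$, so that $\langle T_{+},\mu_{-}\rangle=i(\mathcal{F}^{s},\mathcal{F}^{s})=0$ and $\langle T_{+},\mu_{\alpha}\rangle=i(\alpha,\mathcal{F}^{s})=0$ since $\alpha$ is peripheral.

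For $(\Rightarrow)$ I argue by contraposition. If $[\mu]\notin H_{0}(\varphi)$, then $\{[\mu]\}$ is a compact set contained in $\mathbb{P}Curr(F_{N})\setminus H_{0}(\varphi)$, so Theorem \ref{B} gives $\varphi^{n}[\mu]\to[\mu_{+}]$. Iterating the equivariance identity, $\langle T_{+},\varphi^{n}\mu\rangle=\langle T_{+}\varphi^{n},\mu\rangle=\lambda^{n}\langle T_{+},\mu\rangle$; hence if $\langle T_{+},\mu\rangle=0$, then, writing $\nu_{n}$ for the norm-one representative of $\varphi^{n}[\mu]$, homogeneity gives $\langle T_{+},\nu_{n}\rangle=0$ for all $n$, and continuity of the pairing together with the convergence $\nu_{n}\to$ (the norm-one representative of $[\mu_{+}]$) forces $\langle T_{+},\mu_{+}\rangle=0$. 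But $\langle T_{+},\mu_{+}\rangle>0$: in the surface picture $\langle T_{+},\mu_{+}\rangle=i(\mathcal{F}^{s},\mathcal{F}^{u})>0$ because the invariant foliations of a pseudo-Anosov fill $S$ (more generally, the attracting tree and attracting current of any iwip pair positively, by the work of Kapovich--Lustig on the intersection form). This contradiction shows $\langle T_{+},\mu\rangle>0$, completing the proof. The single genuine input here is the strict positivity $\langle T_{+},\mu_{+}\rangle>0$, together with correctly fixing the orientation conventions so that $\varphi$ expands $T_{+}$, contracts $\mu_{-}$, and fixes $\mu_{\alpha}$; that is where I would be most careful, while the rest is routine bookkeeping with Theorem \ref{B} and the basic properties of the intersection form.
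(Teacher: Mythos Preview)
Your proof is correct and follows essentially the same route as the paper's own proof (Theorem~\ref{Erg}). Both directions match: for $(\Leftarrow)$ the paper does the direct eigenvalue computation $\langle T_{+},\mu_{-}\rangle=\langle T_{+}\varphi,\varphi^{-1}\mu_{-}\rangle=\lambda\lambda_{+}\langle T_{+},\mu_{-}\rangle$ that you packaged as the observation ``different scaling factors force the pairing to vanish''; for $(\Rightarrow)$ the paper argues exactly as you do, using the convergence $a_{n}\varphi^{n}\mu\to\mu_{+}$ from Theorem~\ref{B} together with continuity and the nonvanishing $\langle T_{+},\mu_{+}\rangle\neq 0$, which the paper simply asserts while you supply the surface justification $i(\mathcal{F}^{s},\mathcal{F}^{u})>0$.
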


As an application of our methods, we obtain the following general structural result, which is a \emph{subgroup version} of a theorem of Bestvina-Handel \cite{BH92}. See Theorem \ref{BH92} below. 

\begin{theor} Let $H\leq Out(F_N)$ and suppose that $H$ contains an iwip $\varphi$. Then one of the following holds:
\begin{enumerate} 
\item $H$ contains a hyperbolic iwip. 
\item $H$ is geometric, i.e. $H$ contains no hyperbolic iwips and $H\leq Mod^{\pm}(S)\leq Out(F_N)$ where $S$ is a compact surface with one boundary component with $\pi_1(S)=F_N$ such that $\varphi\in H$ is induced by a pseudo-Anosov homeomorphism of $S$.  
\end{enumerate}
\end{theor}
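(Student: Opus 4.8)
The plan is to proceed by a dichotomy on whether $\varphi$ itself (or a power of it) is atoroidal. The starting point is the Bestvina--Handel classification quoted as Theorem~\ref{BH92}: an iwip is either hyperbolic, or non-atoroidal and hence induced by a pseudo-Anosov homeomorphism of a compact surface $S$ with a single boundary component $\alpha$ with $\pi_1(S)\cong F_N$. So we may assume $\varphi$ is non-atoroidal, say induced by a pseudo-Anosov $f\in\mathrm{Mod}^{\pm}(S)$, and we must show that either $H$ contains a \emph{hyperbolic} iwip, or every element of $H$ actually lies in the image $\mathrm{Mod}^{\pm}(S)\le Out(F_N)$. The key observation is that the image of the mapping class group of $S$ in $Out(F_N)$ is precisely the stabilizer of the conjugacy class $[\alpha]$ of the boundary curve, equivalently the stabilizer of the geodesic current $[\mu_{\alpha}]\in\mathbb{P}Curr(F_N)$. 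Thus the whole problem is reframed dynamically: we want to show that if $[\mu_\alpha]$ is \emph{not} fixed by all of $H$, then $H$ contains a hyperbolic iwip.

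The main tool is Theorem~\ref{B}: $\varphi$ acts on $\mathbb{P}Curr(F_N)$ with ``generalized north--south dynamics'' whose attracting/repelling sets are the simplices $H_1(\varphi)$ (a segment joining $[\mu_+]$ to $[\mu_\alpha]$) and $H_0(\varphi)$ (a segment joining $[\mu_-]$ to $[\mu_\alpha]$). The strategy is a ping-pong argument: pick $h\in H$ with $h[\mu_\alpha]\ne[\mu_\alpha]$. First I would use the uniform convergence in Theorem~\ref{B} to show that for $n$ large the conjugate $\psi_n:=h\varphi^n h^{-1}$, whose attracting current is $h[\mu_+]$ and whose ``bad sets'' are $h(H_1(\varphi))$, $h(H_0(\varphi))$, is in a ping-pong position with a suitably high power of $\varphi$: one checks that $h[\mu_{\pm}]\notin H_0(\varphi)\cup H_1(\varphi)$ and $[\mu_{\pm}]\notin h(H_0(\varphi)\cup H_1(\varphi))$, which holds precisely because $h[\mu_\alpha]\ne[\mu_\alpha]$ forces the two segments to be genuinely transverse. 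Then a standard ping-pong lemma on $\mathbb{P}Curr(F_N)$ produces, for large $n,m$, an element $\eta:=\varphi^m\psi_n^{-1}=\varphi^m h\varphi^{-n}h^{-1}\in H$ acting on $\mathbb{P}Curr(F_N)$ with honest (two-point) north--south dynamics: attracting fixed point near $[\mu_+]$, repelling fixed point near $h[\mu_-]$.

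Next I would promote this dynamical north--south behavior to the conclusion that $\eta$ is a \emph{hyperbolic iwip}. That $\eta$ is an iwip: an element with north--south dynamics on $\mathbb{P}Curr$ cannot preserve the conjugacy class of a proper free factor, since such a free factor would give an $\eta$-invariant subcurrent-set (the currents ``supported'' on the free factor system), contradicting the dynamics — alternatively one cites Reiner Martin's criterion (discussed after Corollary~F) or Kapovich--Lustig / Bestvina--Feighn--Handel style arguments identifying iwips via their currents. That $\eta$ is \emph{hyperbolic}: here is where the single-boundary-component hypothesis on $S$ is used. A non-atoroidal iwip always has its boundary current $[\mu_\beta]$ as a point of both $H_0$ and $H_1$; if $\eta$ were non-atoroidal it would be geometric, induced by a pseudo-Anosov of a surface $S'$ with one boundary curve $\beta$, and $[\mu_\beta]$ would be the unique current in $H_0(\eta)\cap H_1(\eta)$ — but by construction $\eta$ has genuine two-point dynamics, i.e. $H_0(\eta)=\{[\mu_-^\eta]\}$ and $H_1(\eta)=\{[\mu_+^\eta]\}$ are disjoint singletons, so there is no such periodic boundary current, forcing $\eta$ to be atoroidal. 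This gives case~(1).

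The remaining task is the contrapositive: if $H$ contains \emph{no} hyperbolic iwip, then $h[\mu_\alpha]=[\mu_\alpha]$ for every $h\in H$, whence $H$ stabilizes $[\mu_\alpha]$ and therefore $H\le\mathrm{Mod}^{\pm}(S)\le Out(F_N)$, giving case~(2). (One should also check $H$ is not virtually cyclic in a degenerate way and handle the elementary bookkeeping that conjugates of $\varphi$ by elements of $H$ are again iwips with the expected attracting/repelling currents, which is immediate from naturality of the current construction.) I expect the main obstacle to be the ping-pong verification: one must be careful that the ``bad sets'' $H_0(\varphi), H_1(\varphi), h(H_0(\varphi)), h(H_1(\varphi))$ are pairwise disjoint in $\mathbb{P}Curr(F_N)$, or at least that the four relevant fixed points avoid the opposing bad sets, and this disjointness is exactly the geometric content of $h[\mu_\alpha]\ne[\mu_\alpha]$ together with $h$ not conjugating $\varphi^{\pm}$-data into $\varphi$-data (a separate argument, e.g.\ because otherwise $h$ would normalize the cyclic group up to finite index and hence fix $[\mu_\alpha]$ anyway). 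Making this transversality rigorous, and invoking Theorem~\ref{B}'s \emph{uniformity} to get a single $N$ that works simultaneously on the compact complements involved, is the crux; the rest is assembling known facts about currents and iwips.
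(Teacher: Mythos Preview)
Your skeleton matches the paper: reduce to $\varphi$ non-atoroidal, identify $\mathrm{Mod}^{\pm}(S)$ with the stabilizer of the boundary class $[g]$ via Dehn--Nielsen--Baer, and show that any $\psi\in H$ with $\psi[g]\ne[g^{\pm1}]$ yields a hyperbolic iwip in $H$. The transversality statement $\psi(H_1)\cap H_0=\emptyset$ that you isolate is exactly the Claim the paper proves (via the intersection form and Theorem~\ref{Erg2}). From that point, however, the arguments diverge. The paper does \emph{not} run ping-pong on $\mathbb{P}Curr(F_N)$ to manufacture an element with two-point dynamics; instead it shows directly that the single element $\varphi^m\psi$ is a hyperbolic iwip for large $m$, by combining Lemma~\ref{cpsimilar} (an adaptation of Clay--Pettet: one uses $\langle T_{\pm},\cdot\rangle$ and $\langle T_{-}\psi,\cdot\rangle$ to build a quantity on $Curr(F_N)\setminus\{0\}$ that strictly increases under either $\varphi^m\psi$ or its inverse, ruling out periodic conjugacy classes) with the black-box Proposition~\ref{claypettet} of Clay--Pettet, which gives full irreducibility of $\varphi^m\psi$ from the \emph{tree} condition $[T_{+}\psi]\ne[T_{-}]$ (and Remark~\ref{mainremark} checks that $\psi(H_1)\cap H_0=\emptyset$ implies this tree condition). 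No dynamics of the composed element on $\mathbb{P}Curr$ are ever analyzed.

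Your route has a genuine gap at the step ``two-point north--south dynamics on $\mathbb{P}Curr(F_N)$ implies iwip''. You argue that a periodic proper free factor would produce an $\eta$-invariant proper closed subset of $\mathbb{P}Curr(F_N)$, but north--south dynamics does not forbid invariant proper closed subsets --- witness $\mathcal{M}_N$ itself, which is invariant under every hyperbolic iwip. What you would actually need is that the attracting fixed point of $\eta$ cannot lie in the closure of currents carried by a proper free factor system, i.e.\ a ``filling'' statement about the attracting lamination of $\eta$; establishing this from scratch is essentially of the same order of difficulty as Proposition~\ref{claypettet} itself. Martin's theorem (and \cite{BF10}) give the \emph{forward} implication (hyperbolic iwip $\Rightarrow$ north--south on $\mathbb{P}Curr$), not the converse you invoke. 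Your hyperbolicity argument (no periodic rational current once the two fixed points are irrational) is closer to salvageable and is in the spirit of Lemma~\ref{cpsimilar}, but the iwip step cannot be completed with the tools available in the paper; this is precisely why the paper outsources that step to the tree side via Clay--Pettet.
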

Moreover, if the original iwip $\varphi\in H$ is non-atoroidal and $(1)$ happens, then $H$ contains a free subgroup $L$ of rank two such that every nontrivial element of $L$ is a hyperbolic iwip. See Remark \ref{ranktwo} below. 

In \cite{CFKM,CFKM2}, using a result of Handel-Mosher \cite{HM} about classification of subgroups of $Out(F_N)$, Carette-Francaviglia-Kapovich and Martino showed that every nontrivial normal subgroup of $Out(F_N)$ contains an iwip for $N\ge3$. And they asked whether every such subgroup contains a hyperbolic iwip. As a corollary of Theorem D we answer this question in the affirmative direction:

\begin{corol} Let $N\ge3$. Then, every nontrivial normal subgroup $H$ of $Out(F_N)$ contains a hyperbolic iwip.
\end{corol}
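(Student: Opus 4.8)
\noindent\emph{Proof proposal.} The plan is to combine the theorem of Carette--Francaviglia--Kapovich and Martino \cite{CFKM,CFKM2} with Theorem D, and then to rule out the second (geometric) alternative of Theorem D by exploiting normality together with the rigid structure of the periodic conjugacy classes of a non-atoroidal iwip.

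So let $N\ge 3$ and let $H\trianglelefteq Out(F_N)$ be nontrivial. By \cite{CFKM,CFKM2} (which rests on the Handel--Mosher subgroup classification \cite{HM}), $H$ contains an iwip $\varphi$. Applying Theorem D to $(H,\varphi)$, either $H$ contains a hyperbolic iwip and we are done, or $H\le Mod^{\pm}(S)\le Out(F_N)$ for a compact surface $S$ with one boundary component $\alpha$ and $\pi_1(S)=F_N$, with $\varphi$ induced by a pseudo-Anosov homeomorphism of $S$; in particular $\varphi$ is non-atoroidal. I will show this second case cannot happen.

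Write $A=\{[\alpha],[\alpha^{-1}]\}$ for the unoriented conjugacy class of the boundary curve. Every element of $Mod^{\pm}(S)$, viewed in $Out(F_N)$, is induced by a homeomorphism of $S$ and therefore sends $\partial S$ to itself, hence preserves $A$; thus every element of $H$ preserves $A$. Since $H$ is normal, $\psi^{-1}H\psi=H$ preserves $A$ for each $\psi\in Out(F_N)$, i.e.\ $H$ preserves $\psi(A)$ for every $\psi$. Now I use the standard fact that the periodic conjugacy classes of the non-atoroidal iwip $\varphi$ are precisely the nontrivial powers $[\alpha^{m}]$: a $\varphi$-periodic free homotopy class on $S$ is forced to be peripheral, which one can see, for instance, by hyperbolizing the mapping torus $F_N\rtimes_{\varphi^{2}}\mathbb{Z}$ of the orientation-preserving map $\varphi^{2}$ and noting that a $\varphi$-periodic class is then parabolic, hence conjugate into the cyclic subgroup $\langle\alpha\rangle$ carried by the boundary cusp. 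Granting this, fix $\psi\in Out(F_N)$. Since $\varphi\in H$ preserves the two-element set $\psi(A)=\{[\psi\alpha],[\psi\alpha^{-1}]\}$ and $\varphi$ acts as a bijection on conjugacy classes, $\varphi^{2}$ fixes $[\psi\alpha]$; so $[\psi\alpha]$ is a nontrivial power of $[\alpha]$; and since $\alpha$, being a boundary curve of a surface of negative Euler characteristic, is not a proper power, neither is $\psi\alpha$, so in fact $[\psi\alpha]\in\{[\alpha],[\alpha^{-1}]\}$. Hence $\psi(A)=A$ for every $\psi\in Out(F_N)$.

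Finally, recall that $Out(F_N)$ contains hyperbolic iwips whenever $N\ge3$ (there are none when $N=2$, which is why $N\ge3$ is needed both here and in \cite{CFKM,CFKM2}); let $\psi_{0}$ be one. Then $\psi_{0}$, and hence $\psi_{0}^{2}$, is atoroidal and fixes no nontrivial conjugacy class, whereas $\psi_{0}(A)=A$ forces $\psi_{0}^{2}[\alpha]=[\alpha]$ with $\alpha\neq1$ --- a contradiction. Therefore the geometric alternative of Theorem D is impossible, and $H$ contains a hyperbolic iwip.

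\noindent\emph{Expected main obstacle.} Once the two cited results are granted, the argument is soft apart from the single geometric input that the periodic conjugacy classes of a non-atoroidal iwip are exactly the powers of the boundary curve (so in particular they form a small, explicitly understood family); setting that up carefully --- together with the routine checks that $Mod^{\pm}(S)$ preserves $A$ and that $\partial S$ is root-free --- is the step I would concentrate on. The remainder is just the tension between normality and the fact that hyperbolic iwips, which exist precisely when $N\ge3$, move every conjugacy class.
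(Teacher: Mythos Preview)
Your proof is correct and follows essentially the same route as the paper's: both obtain an iwip in $H$ from \cite{CFKM}, reduce via Theorem~D to ruling out $H\le Mod^{\pm}(S)$, and derive a contradiction from normality together with the fact that the root-free periodic conjugacy class of a non-atoroidal iwip is exactly the boundary class, using the existence of a hyperbolic element of $Out(F_N)$ for $N\ge 3$. The only cosmetic difference is bookkeeping: the paper conjugates $\varphi$ by a hyperbolic $\eta$ to produce $\varphi_1=\eta\varphi\eta^{-1}\in H$ whose unique root-free periodic class is $\eta[\alpha]\neq[\alpha^{\pm1}]$, hence $\varphi_1\notin Mod^{\pm}(S)$; you instead keep $\varphi$ fixed and observe that normality forces $\varphi$ to preserve every $\psi(A)$, whence $\psi(A)=A$ for all $\psi\in Out(F_N)$, contradicting the existence of a hyperbolic iwip. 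These are the same argument viewed from two sides.
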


Recall that the \emph{minimal set} $\mathcal{M}_N\subset\mathbb{P}Curr(F_{N})$, introduced by R. Martin \cite{Martin}, is the closure of the set
\[
\{[\eta_{g}]\ |\ g\in F_{N}\ is\ primitive\ element\}
\]
in $\mathbb{P}Curr(F_{N})$. By a result of Kapovich-Lustig  \cite{KL1},  $\mathcal{M}_N$ is the unique smallest non-empty closed $Out(F_N)$-invariant subset of $\mathbb{P}Curr(F_N)$. Concretely, $\mathcal{M}_N$ is equal to the closure of the $Out(F_{N})$ orbit of $[\eta_{g}]$ for a primitive element $g\in F_{N}$. Note that for every non-atoroidal iwip $\varphi\in Out(F_N)$, its stable current $[\mu_+]$ belongs to $\mathcal{M}_N$. Indeed, for every primitive element $g\in F_N$ the positive iterates $\varphi^{n}([\eta_g])$ converge to $[\mu_{+}]$ by Theorem \ref{A}, and therefore $[\mu_+]\in\mathcal{M}_N$. For similar reasons $[\mu_{-}]\in\mathcal{M}_N$. As a direct consequence of Theorem \ref{B} we obtain:

\begin{corol}\label{minimalint}
Let $\varphi\in Out(F_{N})$ be a non-atoroidal iwip, the action of $\varphi$ on $\mathcal{M}_N$ has uniform north-south dynamics. In other words, given a compact set $K_0\subset\mathcal{M}_N\setminus\{[\mu_{-}]\}$ and an open neighborhood $U$ of  $[\mu_{+}]$ in $\mathcal{M}_N$, there is an integer $M_0>0$ such that $\varphi^{n}(K_0)\subset U$ for all $n\ge M_0$. Similarly, given a compact set $K_1\subset\mathcal{M}_N\setminus\{[\mu_{+}]\}$ and an open neighborhood $V$ of $[\mu_{-}]$ in $\mathcal{M}_N$, there is an integer $M_1>0$ such that $\varphi^{-m}(K_1)\subset V$ for all $m\ge M_1$.
\end{corol}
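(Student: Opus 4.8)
The plan is to deduce this from Theorem \ref{B} by showing that, when we intersect everything with $\mathcal{M}_N$, the ``exceptional'' sets $H_0(\varphi)$ and $H_1(\varphi)$ collapse onto the single points $[\mu_-]$ and $[\mu_+]$ respectively. Recall $S$ has one boundary component $\alpha$, so $\Delta = \{[\mu_\alpha]\}$ is a single point, and $H_0(\varphi) = \{[t_1\mu_- + t_2\mu_\alpha] \mid t_1,t_2\ge 0\}$ is the ``segment'' in $\mathbb{P}Curr(F_N)$ joining $[\mu_-]$ to $[\mu_\alpha]$, and similarly $H_1(\varphi)$ joins $[\mu_+]$ to $[\mu_\alpha]$. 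The key claim is:
\begin{claim*}
$H_0(\varphi)\cap\mathcal{M}_N = \{[\mu_-]\}$ and $H_1(\varphi)\cap\mathcal{M}_N = \{[\mu_+]\}$.
\end{claim*}

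First I would establish this claim. Since $[\mu_-],[\mu_+]\in\mathcal{M}_N$ (as noted in the paragraph preceding the corollary), the inclusions $\supseteq$ are immediate, so the content is the reverse. I would argue via the trees $T_+, T_-$ of $\varphi$: by the Corollary following Theorem \ref{B}, $\langle T_+,\mu\rangle = 0$ exactly when $[\mu] = [a_0\mu_- + b_0\mu_\alpha]$, i.e. exactly on $H_0(\varphi)$; likewise $\langle T_-,\mu\rangle = 0$ exactly on $H_1(\varphi)$. Now suppose $[\mu]\in H_0(\varphi)\cap\mathcal{M}_N$, so $\langle T_+,\mu\rangle = 0$. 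The point is that $\mu_\alpha$, the boundary current, satisfies $\langle T,\mu_\alpha\rangle > 0$ for \emph{every} $T\in\overline{cv}_N$ with dense orbits — more to the point, I would invoke the fact (Kapovich--Lustig) that for $T$ with dense orbits, $\langle T,\mu\rangle = 0$ for $[\mu]\in\mathcal{M}_N$ forces $[\mu]$ to be supported on the ``dual lamination'' $L(T)$, and for $T = T_+$ the only current in $\mathcal{M}_N$ with this property is $[\mu_-]$ itself; equivalently one uses that $\mu_\alpha\notin\mathcal{M}_N$ (a peripheral curve in a one-holed surface is never primitive in $F_N$, being a product of commutators) together with the structure of $H_0(\varphi)$ as a segment. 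Concretely: if $[\mu] = [a_0\mu_- + b_0\mu_\alpha]\in\mathcal{M}_N$ with $b_0 > 0$, then scaling and using that $\mathcal{M}_N$ is closed, applying $\varphi^{-n}$ and using Theorem \ref{A}'s exceptional dynamics gives $\varphi^{-n}[\mu]\to[\mu_\alpha]$, whence $[\mu_\alpha]\in\mathcal{M}_N$, contradicting the non-primitivity of $\alpha$. Hence $b_0 = 0$ and $[\mu] = [\mu_-]$. The argument for $H_1(\varphi)$ is symmetric, applying $\varphi^{n}$ instead.

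With the claim in hand, the corollary is almost immediate. Given a compact $K_0\subset\mathcal{M}_N\setminus\{[\mu_-]\}$, the claim gives $K_0\cap H_0(\varphi) = \emptyset$, so $K_0$ is a compact subset of $\mathbb{P}Curr(F_N)\setminus H_0(\varphi)$. Given an open neighborhood $U$ of $[\mu_+]$ in $\mathcal{M}_N$, write $U = \tilde U\cap\mathcal{M}_N$ for some open $\tilde U$ in $\mathbb{P}Curr(F_N)$. Theorem \ref{B} applied to $K_0$ and $\tilde U$ produces $M_0$ with $\varphi^n(K_0)\subset\tilde U$ for $n\ge M_0$; since $\mathcal{M}_N$ is $Out(F_N)$-invariant we also have $\varphi^n(K_0)\subset\mathcal{M}_N$, so $\varphi^n(K_0)\subset U$ for all $n\ge M_0$. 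The statement for $\varphi^{-m}$ and $[\mu_-]$ follows the same way using $H_1(\varphi)$ and the second half of Theorem \ref{B}.

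The main obstacle is the claim, specifically ruling out the genuinely mixed currents $[a_0\mu_- + b_0\mu_\alpha]$ with both coefficients positive from $\mathcal{M}_N$. The cleanest route is the non-primitivity of the peripheral curve $\alpha$ (so $[\mu_\alpha]\notin\mathcal{M}_N$) combined with the exceptional-dynamics computation already recorded after Theorem \ref{A}, which shows any such mixed point flows under $\varphi^{\mp n}$ to $[\mu_\alpha]$; closedness and invariance of $\mathcal{M}_N$ then give the contradiction. One should double-check that $[\mu_\alpha]$ really is excluded from $\mathcal{M}_N$ — this uses that $\mathcal{M}_N$ is the closure of primitive conjugacy classes and that a simple closed curve isotopic to the boundary of a one-holed surface represents a non-primitive element of $F_N$, hence cannot be a limit of primitive ones in the relevant sense; invoking the Kapovich--Lustig characterization of $\mathcal{M}_N$ as the smallest closed invariant set sidesteps any subtlety, since $[\mu_\alpha]$ is fixed by $\varphi$ and generates a strictly smaller closed invariant set if it were in $\mathcal{M}_N$, contradicting minimality unless $[\mu_\alpha]\in\{[\mu_+],[\mu_-]\}$, which it is not.
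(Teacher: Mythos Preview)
Your approach is essentially the same as the paper's: reduce to Theorem \ref{B} by showing $H_0(\varphi)\cap\mathcal{M}_N=\{[\mu_-]\}$ and $H_1(\varphi)\cap\mathcal{M}_N=\{[\mu_+]\}$, which in turn comes down to $[\mu_\alpha]\notin\mathcal{M}_N$ together with the exceptional dynamics pushing any mixed point in $H_0\cup H_1$ to $[\mu_\alpha]$. Two points to fix. First, the direction of iteration is reversed: for $[\mu]=[a_0\mu_-+b_0\mu_\alpha]$ with $a_0,b_0>0$ it is the \emph{forward} iterates $\varphi^{n}[\mu]$ that converge to $[\mu_\alpha]$ (and $\varphi^{-n}[\mu]\to[\mu_-]$), and symmetrically for $H_1$; this does not affect the argument, since either direction lands $[\mu_\alpha]$ in the orbit closure. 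Second, your fallback minimality argument for $[\mu_\alpha]\notin\mathcal{M}_N$ does not work: minimality of $\mathcal{M}_N$ is with respect to closed $Out(F_N)$-invariant sets, not $\langle\varphi\rangle$-invariant sets, so the fact that $\{[\mu_\alpha]\}$ is $\varphi$-invariant gives no contradiction. The paper handles this step via the Whitehead-graph criterion: any $[\nu]\in\mathcal{M}_N$ has Whitehead graph (with respect to any basis) either disconnected or with a cut vertex, and choosing a basis in which $\alpha$ is the product of commutators one checks directly that $[\mu_\alpha]$ fails this test. Your ``product of commutators'' remark is exactly the right input, but it needs this criterion to conclude about the closure.
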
 

In his 1995 thesis \cite{Martin}, Reiner Martin stated Corollary F, but without proof.
In \cite[Proposition 4.11]{BF10} Bestvina and Feighn prove uniform north-south dynamics for the action of a hyperbolic iwip on $\mathbb{P}Curr(F_N)$ and for a non-atoroidal iwip on $\mathcal{M}_N$ (i.e. Corollary F). Their proof follows Martin's original approach for the case of hyperbolic iwips which is quite different from our proof of Corollary F here. We also give a detailed proof of Reiner Martin's result in \cite{U}. 

\begin{ack*} I am indebted to my advisors Ilya Kapovich and Chris Leininger for guidance, support and encouragement. I am grateful to Matt Clay for bringing \cite{CP} to our attention, which helped us to streamline our original proof of Theorem \ref{mainapp}. I would also like to thank Spencer Dowdall, Martin Lustig, Brian Ray and Ric Wade for helpful discussions. Finally, I would like to thank to the referee for useful comments. 
\end{ack*}

\section{Preliminaries}\label{Prelim}
\subsection{Geodesic Currents on Free Groups and Outer Space}\label{2.1}	
Let $F_{N}$ be a finitely generated free group of rank $N\geq2$. Let
us denote the Gromov boundary of $F_{N}$ by $\partial F_{N}$ and set 
\[
\partial^{2}F_{N}:=\{(\xi,\zeta)\ | \xi,\zeta\in\partial F_{N},\ and\ \xi\neq\zeta\}.
\]

A geodesic current on $F_{N}$ is a positive locally finite Borel measure on $\partial^{2}F_{N}$,
which is $F_{N}$-invariant and $\sigma_{f}$-invariant, where $\sigma_{f}:\partial^{2}F_{N}\rightarrow\partial^{2}F_{N}$
is the $flip$ map defined by 
\[
\sigma_{f}(\xi,\zeta)=(\zeta,\xi)
\]
for $(\xi,\zeta)\in\partial^{2}F_{N}$. 
We will denote the space of geodesic currents on $F_{N}$ by $Curr(F_{N})$. The space $Curr(F_{N})$ is endowed with the weak* topology so that, given $\nu_{n},\nu\in Curr(F_{N})$, $\lim_{n\to\infty}\nu_{n}=\nu$ if and only if $\lim_{n\to\infty}\nu_{n}(S_1\times S_2)=\nu(S_1\times S_2)$ for all disjoint closed-open subsets $S_{1}, S_{2}\subseteq \partial F_{N}$. 

For a Borel subset $S$ of $\partial^{2}F_{N}$ and $\varphi\in Aut(F_N)$,
\[
\varphi\nu(S):=\nu(\varphi^{-1}(S))
\]
defines a continuous, linear left action of $Aut(F_{N})$ on $Curr(F_{N})$. Moreover, $Inn(F_{N})$ acts trivially, so that the action induces an action by the quotient group $Out(F_{N})$.

Let $\nu_{1},\nu_{2}$ be two non-zero currents, we say $\nu_{1}$
is equivalent to $\nu_{2}$, and write $\nu_{1}\sim\nu_{2}$, if there
is a positive real number $r$ such that $\nu_{1}=r\nu_{2}$. Then, the
space of \emph{projectivized geodesic currents} on $F_{N}$ is defined by
\[
\mathbb{P}Curr(F_{N}):=\{\nu\in Curr(F_{N}):\ \nu\neq0\}/\sim.
\]
We will denote the projective class of the current $\nu$ by $[\nu]$. 
The space $\mathbb{P}Curr(F_{N})$ inherits the quotient topology and the above $Aut(F_{N})$ and $Out(F_{N})$ actions on $Curr(F_{N})$ descend to well defined actions on $\mathbb{P}Curr(F_{N})$ as follows: For $\varphi\in Aut(F_{N})$ and $[\nu]\in\mathbb{P}Curr(F_{N})$,
\[
\varphi[\nu]:=[\varphi\nu].
\]

Let $R_N$ be the \emph{rose} with N petals, that is, a finite graph with one vertex $q$, and $N$ loop-edges attached to the vertex $q$. Let $\Gamma$ be a finite, connected graph with no valence-one vertices such that $\pi_{1}(\Gamma,p)\cong F_N$. A \emph{simplicial chart} or \emph{marking} on $F_{N}$ is a homotopy equivalence $\alpha:R_N\to\Gamma$. The marking $\alpha:R_N\to\Gamma$ induces an isomorphism, which we will also denote by $\alpha$, $\alpha:\pi_1(R_N,q)\to\pi_1(\Gamma,p)$ where $p=\alpha(q)$.  There are canonical $F_{N}$-equivariant homeomorphisms $\tilde{\alpha}=\partial\alpha:\partial F_{N}\to\partial\tilde{\Gamma}$ and $\partial^{2}\alpha:\partial^{2}F_{N}\to\partial^{2}\tilde{\Gamma}$ induced by $\alpha$.

Let $\gamma$ in $\tilde{\Gamma}$ be a reduced edge path. Define the \emph{cylinder} set  for $\gamma$ to be 
\[
Cyl_{\alpha}(\gamma):=\{(\xi,\zeta)\in\partial^{2}F_{N} : \gamma\subset[\tilde\alpha(\xi),\tilde\alpha(\zeta)]\},
\]
where $[\tilde\alpha(\xi),\tilde\alpha(\zeta)]$ denotes the geodesic from $\tilde\alpha(\xi)$ to $\tilde\alpha(\zeta)$ in $\tilde{\Gamma}$.

For an edge-path $v$ in $\Gamma$, let $\gamma$ be any lift of $v$ to $\tilde{\Gamma}$. We set 
\[
\left<v,\nu\right>_\alpha:=\nu\left(Cyl_{\alpha}(\gamma)\right)
\]
and call $\left<v,\nu\right>_{\alpha}$ \emph{the number of occurrences of $v$ in $\nu$}.

In \cite{Ka2}, Kapovich showed that given $\nu_{n},\nu\in Curr(F_{N})$, $\lim_{n\to\infty}\nu_{n}=\nu$ if and only if $\lim_{n\to\infty}\left<v,\nu_{n}\right>_\alpha=\left<v,\nu\right>_\alpha$ for all edge paths $v$ in $\Gamma$. Moreover a geodesic current is uniquely determined by the set of values $(\left<v,\nu\right>_{\alpha})_{v\in \mathcal{P}\Gamma}$, where $\mathcal{P}\Gamma$ is the set of all edge paths in $\Gamma$. 

For any $\nu\in Curr(F_{N})$, the \emph{weight of $\nu$ with respect to $\alpha$} is defined as 
\[
w_{\alpha}(\nu):=\sum_{e\in E\Gamma}\left<e,\nu\right>_{\alpha},
\]    
where $E\Gamma$ is the set of all oriented edges of $\Gamma$. It turns out that the concept of \emph{weight} is useful in terms of giving convergence criteria in the space of projective currents $\mathbb{P}Curr(F_{N})$:
\begin{lem}\cite{Ka2} For any nonzero $\nu_{n},\nu\in Curr(F_{N})$, we have
\[
\lim_{n\to\infty}[\nu_{n}]=[\nu]\ if\ and\ only\ if\  \lim_{n\to\infty}\frac{\nu_{n}}{w_{\alpha}(\nu_{n})}=\frac{\nu}{w_{\alpha}(\nu)}.
\]
\end{lem}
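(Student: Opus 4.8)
The plan is to identify $\mathbb{P}Curr(F_N)$ with the affine slice $W_\alpha:=\{\mu\in Curr(F_N)\ :\ w_\alpha(\mu)=1\}$ by means of the normalization map $N\colon[\nu]\mapsto\nu/w_\alpha(\nu)$, after first collecting the properties of the weight functional $w_\alpha$ that make this work.

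The first step is to record that $w_\alpha$ is linear, continuous, and strictly positive on $Curr(F_N)\setminus\{0\}$. Linearity is immediate, since for each edge $e$ the map $\nu\mapsto\langle e,\nu\rangle_\alpha=\nu(Cyl_\alpha(\gamma_e))$ is linear (for any fixed lift $\gamma_e$ of $e$) and $E\Gamma$ is finite; continuity is immediate from the convergence criterion of \cite{Ka2} recalled above, which forces $\langle e,\nu_n\rangle_\alpha\to\langle e,\nu\rangle_\alpha$ for every edge $e$ whenever $\nu_n\to\nu$. For strict positivity, observe that every bi-infinite geodesic in $\tilde\Gamma$ traverses at least one edge, so the $F_N$-translates of the finitely many cylinder sets $Cyl_\alpha(\gamma_e)$, $e\in E\Gamma$, cover $\partial^2 F_N$; hence by $F_N$-invariance of $\nu$, if $\nu\neq 0$ then $\langle e,\nu\rangle_\alpha>0$ for some $e\in E\Gamma$, so $w_\alpha(\nu)>0$. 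Each $\langle e,\nu\rangle_\alpha$ is moreover finite, since $Cyl_\alpha(\gamma_e)$ is a compact subset of $\partial^2 F_N$ (here one uses that $\tilde\Gamma$ is locally finite) and $\nu$, being locally finite, is finite on compact sets; thus $0<w_\alpha(\nu)<\infty$ for every nonzero $\nu$. Consequently $\nu$ and $\nu/w_\alpha(\nu)$ always lie in the same projective class with $\nu/w_\alpha(\nu)\in W_\alpha$, so $N$ is a well-defined bijection $\mathbb{P}Curr(F_N)\to W_\alpha$ whose inverse is the restriction $\pi|_{W_\alpha}$ of the quotient projection $\pi\colon Curr(F_N)\setminus\{0\}\to\mathbb{P}Curr(F_N)$.

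The implication $(\Leftarrow)$ then follows at once: $\pi$ is continuous, $\pi(\nu_n/w_\alpha(\nu_n))=[\nu_n]$ and $\pi(\nu/w_\alpha(\nu))=[\nu]$, so $\nu_n/w_\alpha(\nu_n)\to\nu/w_\alpha(\nu)$ yields $[\nu_n]\to[\nu]$. For $(\Rightarrow)$ it suffices to show $N$ is continuous, and the crux is that $W_\alpha$ is compact. It is closed in $Curr(F_N)$, being a level set of the continuous map $w_\alpha$; and for every edge path $v$ with initial edge $e$ the cylinder of $v$ is contained in the cylinder of $e$, so $0\le\langle v,\mu\rangle_\alpha\le\langle e,\mu\rangle_\alpha\le w_\alpha(\mu)=1$ for all $\mu\in W_\alpha$. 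Under the topological embedding $Curr(F_N)\hookrightarrow\prod_{v\in\mathcal{P}\Gamma}\mathbb{R}_{\geq 0}$, $\nu\mapsto(\langle v,\nu\rangle_\alpha)_v$, of \cite{Ka2}, the set $W_\alpha$ therefore maps into the compact cube $\prod_v[0,1]$ while remaining closed, so $W_\alpha$ is compact. Since $\mathbb{P}Curr(F_N)$ is Hausdorff (this is standard; alternatively it follows because $\pi$ is open and, using the continuity and strict positivity of $w_\alpha$, the scaling relation is closed in $(Curr(F_N)\setminus\{0\})^2$), the continuous bijection $\pi|_{W_\alpha}$ from the compact space $W_\alpha$ to the Hausdorff space $\mathbb{P}Curr(F_N)$ is a homeomorphism; hence $N=(\pi|_{W_\alpha})^{-1}$ is continuous, and $[\nu_n]\to[\nu]$ gives $\nu_n/w_\alpha(\nu_n)=N([\nu_n])\to N([\nu])=\nu/w_\alpha(\nu)$.

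The only genuine obstacle is this last direction, because convergence in the quotient topology on $\mathbb{P}Curr(F_N)$ does not by itself supply convergent representatives; compactness of the slice $W_\alpha$ together with Hausdorffness of $\mathbb{P}Curr(F_N)$ is exactly what repairs that. If one prefers to avoid the homeomorphism statement, the same conclusion comes from a subsequence argument: the normalized currents $\mu_n:=\nu_n/w_\alpha(\nu_n)$ lie in the compact metrizable set $W_\alpha$, any subsequential limit $\mu_\infty$ satisfies $[\mu_\infty]=\lim_n[\nu_n]=[\nu]$ by continuity of $\pi$ and Hausdorffness, and $w_\alpha(\mu_\infty)=1$ then forces $\mu_\infty=\nu/w_\alpha(\nu)$; hence $\mu_n\to\nu/w_\alpha(\nu)$.
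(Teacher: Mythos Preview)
The paper does not supply a proof of this lemma; it is quoted from \cite{Ka2} and left unproved here. So there is nothing to compare against in the paper itself.

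Your argument is correct and is essentially the standard one (as in \cite{Ka2}): normalize by the weight to land in the slice $W_\alpha$, note that $W_\alpha$ is compact, and use that a continuous bijection from a compact space to a Hausdorff space is a homeomorphism. One small point worth making explicit: when you argue that $W_\alpha$ is compact via the embedding $Curr(F_N)\hookrightarrow\prod_{v\in\mathcal P\Gamma}\mathbb R_{\ge0}$, you need not just that this map is a topological embedding but that its image is \emph{closed} in the product (so that closedness of $W_\alpha$ in $Curr(F_N)$ transfers to closedness in the cube $\prod_v[0,1]$). This is true---the image is cut out by the Kirchhoff/switch conditions, a family of linear equations, and this is part of what \cite{Ka2} proves---but your phrase ``while remaining closed'' leaves it implicit. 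Once that is said, the proof is complete; your alternative subsequence argument at the end also works and rests on the same fact.
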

 
\begin{defn}[Rational Currents] Let $g\in F_N$ be a nontrivial element such that $g\neq h^{k}$ for any $h\in F_N$ and $k>1$. Then define the \emph{counting current} $\eta_{g}$ as follows:  For a closed-open subset $S$ of $\partial^{2} F_{N}$, $\eta_{g}$ is the number of $F_{N}$-translates of $(g^{-\infty}, g^{\infty})$ and $(g^{\infty}, g^{-\infty})$ that are contained in $S$. For an arbitrary non-trivial element $g\in F_N$ write $g=h^{k}$, where $h$ is not a proper power. Then define $\eta_{g}:=k\eta_{h}$. Any nonnegative scalar multiple of a counting current is called a \emph{rational current}. 
\end{defn}
An important fact about rational currents is that, the set of rational currents is dense in $Curr(F_{N})$, see \cite{Ka1,Ka2}. Note that for any $h\in F_N$ we have $(hgh^{-1})^{-\infty}=hg^{-\infty}$ and $(hgh^{-1})^{\infty}= hg^{\infty}$. From here, it is easy to see that $\eta_{g}$ depends only on the conjugacy class of the element $g$. So, from now on, we will use $\eta_{g}$ and $\eta_{[g]}$ interchangeably. 

It turns out that \emph{the number of occurrences} of an edge path in a \emph{rational current} has a more concrete description: Let $c$ be a circuit in $\Gamma$. For any edge-path $v$ define \emph{number of occurrences of $v$ in $c$}, denoted by $\left<v,c\right>_{\alpha}$, to be the number of vertices in $c$ such that starting from that vertex, moving in the positive direction on $c$ one can read off $v$ or $v^{-1}$ as an edge-path. Then for an edge-path $v$, and a conjugacy class $[g]$ in $F_{N}$ one has 
\[
\left<v,\eta_{[g]}\right>_{\alpha}=\left<v,c(g)\right>_{\alpha},
\]
where $c(g)=\alpha(g)$ is the unique reduced circuit in $\Gamma$ representing $[g]$, see \cite{Ka2}.

\begin{defn}[Outer Space]  The space of minimal, free and discrete isometric actions of $F_{N}$ on $\mathbb{R}$-trees up to $F_{N}$-equivariant isometry is denoted by $cv_N$ and called \emph{unprojectivized Outer Space}. The closure of the Outer Space, $\overline{cv}_N$, consists precisely of \emph{very small, minimal, isometric} actions of $F_{N}$ on $\mathbb{R}$-trees, see \cite{BF93}. 
It is known that \cite{CM} every point in the closure of the outer space is uniquely determined by its \emph{translation length function} $\|.\|_{T}:F_{N}\to\mathbb{R}$ where $\|g\|_{T}=\min\limits_{x\in T}d_{T}(x,gx)$. There is a natural continuous right action of $Aut(F_{N})$ on $\overline{cv}_N$, which in the level of translation length functions defined by
\[
\|g\|_{T\varphi}=\|\varphi(g)\|_{T}
\]
for any $T\in\overline{cv}_N$ and $\varphi\in Aut(F_{N})$. It is easy to see that for any $h\in F_{N}$, $\|hgh^{-1}\|_{T}=\|g\|_{T}$. So $Inn(F_{N})$ is in the kernel of this action, hence the above action factors through $Out(F_{N})$.
\end{defn}
A useful tool relating geodesic currents to outer space is the intersection form introduced by Kapovich-Lustig.
\begin{prop-defn}\cite{KL2}\label{intform} There exists a unique continuous map $\left<,\right>:\overline{cv}_N\times Curr(F_{N})\to\mathbb{R}_{\geq0}$ with the following properties:
\begin{enumerate}
\item $\left<T,c_{1}\nu_{1}+c_{2}\nu_{2}\right>=c_{1}\left<T,\nu_{1}\right>+c_{2}\left<T,\nu_{2}\right>$ for any $T\in\overline{cv}_N$, $\nu_{1},\nu_{2}\in Curr(F_{N})$ and nonnegative scalars $c_{1},c_{2}$. 
\item$\left<cT,\nu\right>=c\left<T,\nu\right>$ for any $T\in\overline{cv}_N$ and $\nu\in Curr(F_{N})$ and $c\geq0$. 
\item$\left<T\varphi,\nu\right>=\left<T,\varphi\nu\right>$ for any $T\in\overline{cv}_N$, $\nu\in Curr(F_{N})$ and $\varphi\in Out(F_{N})$.
\item $\left<T,\eta_{g}\right>=\|g\|_{T}$ for any  $T\in\overline{cv}_N$, any nontrivial $g\in F_{N}$.
\end{enumerate}
\end{prop-defn}
\noindent A detailed discussion of geodesic currents on free groups can be found
in \cite{Ka1,KL1,KL2,KL3}.
\subsection{Geodesic Currents on Surfaces \label{2.2}}
Let $S$ be a closed surface of genus $g\geq2$. Let us fix a hyperbolic metric on $S$ and consider the universal cover $\tilde{S}$ with the pull-back metric. There is a natural $\pi_{1}(S)$ action on $\tilde{S}$ by isometries. We will denote the space of bi-infinite, unoriented, unparameterized geodesics in $\tilde S$ by $G(\tilde S)$, given the quotient topology from the compact open topology on parameterized bi-infinite geodesics. Since such a geodesic is uniquely determined by the (unordered) pair of its distinct end points on the boundary at infinity of $\tilde{S}$, a more concrete description can be given by
\[
G(\tilde{S})=\big((\tilde{S}_{\infty}\times\tilde{S}_{\infty})-\Delta\big)\big/ \big((\xi,\zeta)=(\zeta,\xi)\big).  
\]

A \emph{geodesic current} on $S$ is a locally finite Borel measure on $G(\tilde{S})$ which is $\pi_{1}(S)$ invariant. The set of all geodesic currents on $S$, denoted by $Curr(S)$, is a metrizable topological space with the weak* topology, see \cite{Bo86,Bo88}.

As a simple example, consider the preimage in $\tilde{S}$ of any closed curve $\gamma\subset S$, which is a collection of complete geodesics in $\tilde{S}$. This is a discrete subset of $G(\tilde{S})$ which is invariant under the action of $\pi_{1}(S)$. Dirac (counting) measure associated to this set on $G(\tilde{S})$ gives a geodesic current on $S$, which is denoted by $\mu_\gamma$. Note that this construction gives an injection from the set of closed curves on $S$ to $Curr(S)$. As another example, one can consider a measured geodesic lamination as a geodesic current, see \cite{Bo86, Lein}. Therefore, the set $\mathcal{ML}(S)$ of all measured geodesic laminations on $S$, is a subset of $Curr(S)$. 

Recall that \emph{geometric intersection number} of $\alpha$, $\beta$ for any two homotopy classes of closed curves is the minimum number of intersections of $\alpha'$ and $\beta'$ for $\alpha'\simeq\alpha$ and $\beta'\simeq\beta$. One can show that this minimum is realized when $\alpha'$, $\beta'$ are geodesic representatives. 

In \cite{Bo86}, Bonahon constructed a continuous, symmetric, bilinear function $i(,):Curr(S)\times Curr(S)\to\mathbb{R}_{\geq0}$ such that  for any two homotopy classes of closed curves $\alpha$, $\beta$, $i(\mu_\alpha,\mu_\beta)$ is the geometric intersection number between $\alpha$ and $\beta$. We will call $i(,)$ the \emph{intersection number function}. 

We say that a geodesic current $\mu\in Curr(S)$ is \emph{binding} if $i(\nu,\mu)>0$ for all $\nu\in Curr(S)$. For example, let $\beta=\{\beta_1,\dotsc,\beta_m\}$ be a filling set of simple closed curves, i.e. union of geodesic representatives of $\beta_i$ cuts $S$ up into topological disks. Then $\mu_\beta=\sum\mu_{\beta_i}$ is a binding current. One of the facts proved in \cite{Bo88} that we will use repeatedly in our arguments is as follows:

\begin{prop}[Bonahon]\label{binding} Let $\beta$ be a binding current. Then the set of $\nu\in Curr(S)$ with $i(\nu,\beta)\leq M$ for $M>0$ is compact in $Curr(S)$. 
\end{prop}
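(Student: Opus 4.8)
The plan is to reduce the statement to a compactness criterion for currents phrased in terms of occurrences of edge-paths, using the intersection form as a substitute for the weight function. First I would recall the basic setup: $Curr(S)$ is a subspace of (or naturally identified with a space related to) $Curr(F_N)$ after picking an identification $\pi_1(S)=F_N$ and a marking $\alpha\colon R_N\to\Gamma$. Since the weak* topology on $Curr(S)$ is metrizable and $Curr(S)$ sits inside the locally compact space of locally finite Borel measures on $G(\tilde S)$ with the weak* topology, it suffices to show that the set
\[
\mathcal{C}_M:=\{\nu\in Curr(S)\ :\ i(\nu,\beta)\le M\}
\]
is closed and that it is \emph{bounded}, in the sense that for every compact (equivalently, every cylinder) set $C\subseteq G(\tilde S)$ there is a uniform bound on $\nu(C)$ for $\nu\in\mathcal{C}_M$; then sequential compactness follows from a standard Banach–Alaoglu / diagonal argument together with the fact that a weak* limit of currents is again a current.

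The key step — and the main obstacle — is establishing the uniform upper bound $\nu(C)\le B(C,M)$ for all $\nu\in\mathcal{C}_M$. Here is where bindingness of $\beta$ is used in an essential way. Fix a cylinder set $C=Cyl_\alpha(\gamma)$ corresponding to a reduced edge-path $\gamma$ in $\tilde\Gamma$ (these generate the relevant $\sigma$-algebra, so it is enough to bound $\nu$ on them). Using the fact that $\beta$ is binding, every bi-infinite geodesic in $\tilde S$ whose endpoint pair lies in $C$ must cross at least one of the lifts of the geodesic representatives of the curves comprising $\beta$; moreover, by a compactness argument on the (compact) set of geodesic segments realizing the combinatorial pattern $\gamma$, one can arrange that there is a \emph{fixed} finite collection of lifts $\tilde\beta_1,\dots,\tilde\beta_k$ of arcs of $\beta$, and a constant $c>0$, such that any geodesic in $C$ crosses one of the $\tilde\beta_j$ transversally with a definite angle/length, so that $i(\nu,\beta)\ge \frac1c\,\nu(C)$ — up to covering $C$ by finitely many pieces each of which is "captured" by a single arc of $\beta$. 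This yields $\nu(C)\le c\,i(\nu,\beta)\le cM$, which is the desired bound. The delicate points are: (i) making precise, via the structure of the intersection form $i(\cdot,\cdot)$ from \cite{Bo86} as an integral over pairs of transversally intersecting geodesics, that mass of $\nu$ concentrated in $C$ forces a proportional amount of intersection with $\beta$; and (ii) getting the constant $c$ to be uniform over all $\nu$, which is where the local finiteness of $\beta$ (it is a genuine current supported on finitely many $\pi_1$-orbits of geodesics) and a compactness argument for geodesic segments are invoked.

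Once the uniform bound is in hand, closedness of $\mathcal{C}_M$ is immediate from continuity of $i(\cdot,\beta)$ (Bonahon's theorem), and compactness follows: given a sequence $\nu_n\in\mathcal{C}_M$, the uniform bounds on $\nu_n(C)$ over a countable generating family of cylinder sets let us extract, by a diagonal argument, a subsequence converging weak* to some locally finite measure $\nu_\infty$; $\pi_1(S)$-invariance passes to the limit, so $\nu_\infty\in Curr(S)$, and $i(\nu_\infty,\beta)\le M$ by continuity, so $\nu_\infty\in\mathcal{C}_M$. I would remark that this is exactly Proposition 5 (or its analogue) in \cite{Bo88}, and for the purposes of this paper one may simply cite it; the sketch above is the shape of Bonahon's argument, with the one genuinely substantive input being the "bindingness forces proportional intersection" estimate.
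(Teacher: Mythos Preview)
The paper does not prove this proposition: it is stated with attribution to Bonahon \cite{Bo88} and used as a black box. Your proposal already anticipates this, correctly noting at the end that one may simply cite \cite{Bo88}. So there is nothing to compare against in the paper itself.

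As for your sketch of Bonahon's argument, the overall strategy (closedness from continuity of $i(\cdot,\beta)$, uniform mass bounds on cylinder sets, then a diagonal/Banach--Alaoglu extraction) is the right shape. One inaccuracy worth flagging: your parenthetical that $\beta$ ``is a genuine current supported on finitely many $\pi_1$-orbits of geodesics'' is not true for a general binding current---the Liouville current of a hyperbolic metric is binding but has full support. Your ``mass forces intersection'' estimate, as written, leans on that assumption (finitely many arcs $\tilde\beta_1,\dots,\tilde\beta_k$ to cross). This is harmless for the paper's actual applications, where $\beta$ is always a filling multicurve, but it is a gap if you want the full statement. Bonahon's original route avoids this by first proving compactness of sublevel sets of the hyperbolic length functional $\ell(\nu)=i(\nu,L)$ via a direct geometric argument, and then bootstrapping to an arbitrary binding $\beta$ using compactness of $\mathbb{P}Curr(S)$ and positivity of $i(\cdot,\beta)/i(\cdot,L)$.
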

Proposition \ref{binding} also implies that the \emph{space of projective geodesic currents}, $\mathbb{P}Curr(S)$, is compact where 
\[
\mathbb{P}Curr(S)=(Curr(S)\setminus0\big)/\sim
\] 
and where $\nu_{1}\sim\nu_{2}\ if\ and\ only\ if\ \nu_{1}=c\nu_{2}$ for some $c>0$.

In \cite{Otal}, Otal used the intersection number function to distinguish points in $Curr(S)$.
\begin{thm}\cite{Otal}\label{Otal} Given $\nu_{1},\nu_{2}\in Curr(S)$, $\nu_{1}=\nu_{2}\ if\ and\ only\ if\ i(\nu_{1},\alpha)=i(\nu_{2},\alpha)$ for every closed curve $\alpha$ in $S$.
\end{thm}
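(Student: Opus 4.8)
The plan is to treat the ``only if'' direction as immediate and to prove the ``if'' direction by recovering the Borel measure $\nu$ on $G(\tilde S)$ from the collection of numbers $i(\nu,\mu_\alpha)$, with $\alpha$ ranging over all closed geodesics on $S$. If $\nu_1=\nu_2$ then of course $i(\nu_1,\mu_\alpha)=i(\nu_2,\mu_\alpha)$ for every $\alpha$, since $i$ is a well-defined function on $Curr(S)\times Curr(S)$. For the converse I would work with the ``boxes'' $B(I,J)=\{[\xi,\zeta]\in G(\tilde S)\ :\ \text{one of }\xi,\zeta\text{ lies in }I\text{ and the other in }J\}$, where $I,J$ are disjoint closed arcs of $\tilde S_\infty$; the open boxes form a basis for the topology of $G(\tilde S)$, so two currents coincide as soon as they agree on all boxes and their $\pi_1(S)$-translates. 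Fixing the two given currents $\nu_1,\nu_2$, I would first note that for all positions of the four endpoints (``corners'') of $I$ and $J$ outside a countable exceptional set, the box $B(I,J)$ is relatively compact and has $\nu_i$-null topological boundary: indeed $\partial B(I,J)$ lies in the union of the four ``pencils'' $P_x=\{[\xi,\zeta]\in G(\tilde S)\ :\ x\in\{\xi,\zeta\}\}$ at the corners, and a short argument using local finiteness of $\nu_i$ (together with the fact that any two distinct pencils meet in a single geodesic) shows that $\nu_i(P_x)>0$ for only countably many $x\in\tilde S_\infty$. Thus it suffices to prove $\nu_1(B(I,J))=\nu_2(B(I,J))$ for such generic $I,J$, since these boxes still determine the measures by monotone convergence.

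The next step relates box masses to intersection numbers. By Bonahon's construction of $i$, for a closed geodesic $\alpha$ with conjugacy class $[g]$ the number $i(\nu,\mu_\alpha)$ is the $\nu$-mass, counted over a fundamental domain for the $\pi_1(S)$-action on $\tilde S$, of the geodesics crossing the full preimage $\hat\alpha\subset\tilde S$ of $\alpha$ (a locally finite family of complete geodesics, the axes of the conjugates of $g$), each crossing counted with multiplicity one; when $\alpha$ is simple this is exactly $\nu(H_k)$, where $H_k=\{\ell'\in G(\tilde S)\ :\ \ell'\text{ crosses }k\}$ and $k$ is a fundamental segment of one lift of $\alpha$, and in general it is a finite sum of such terms $\nu(H_{k_j})$ over the arcs $k_j$ of $\hat\alpha$ meeting a fundamental domain. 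Using that the set of endpoint pairs of axes of hyperbolic elements of $\pi_1(S)$ is dense in $(\tilde S_\infty\times\tilde S_\infty)\setminus\Delta$, I would pick, for the complete geodesic $\ell$ spanned by a short generic segment $k_0$, hyperbolic elements $g_n$ whose axes converge to $\ell$ and whose self-crossings (crossings among distinct $\pi_1(S)$-translates of the axis) can be confined outside a prescribed large compact set $\Omega\supset k_0$ --- such $g_n$ are manufactured by following $\ell$ for a long time and then closing up along a controlled path. On $\Omega$ the picture is then a disjoint union of finitely many arcs $k_j^{(n)}$ of $\hat\alpha_n$ (with $\alpha_n=[g_n]$), one of which is close to $k_0$, and letting $n\to\infty$ while exploiting continuity of $i$ and the $\nu_i$-nullity of generic box boundaries, I would try to isolate the term $\nu(H_{k_0})=\nu(B(I,J))$ inside $i(\nu,\mu_{\alpha_n})$ and conclude $\nu_1(B(I,J))=\nu_2(B(I,J))$ from $i(\nu_1,\mu_{\alpha_n})=i(\nu_2,\mu_{\alpha_n})$.

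The hard part, and the step I expect to be the real obstacle, is exactly this last isolation: a single number $i(\nu,\mu_{\alpha_n})$ bundles together the crossing masses of all arcs of $\hat\alpha_n$ in a fundamental domain (and, since $\alpha_n$ need not be simple, self-crossing multiplicities), and one must extract from it the contribution of the one distinguished arc near $k_0$. Making this work requires (a) a geometric lemma producing closed geodesics that shadow a prescribed complete geodesic along an arbitrarily long arc while keeping all of their self-crossings inside a prescribed region, so that elsewhere they behave like an embedded arc, and (b) a telescoping or inclusion--exclusion argument over an exhaustion of $G(\tilde S)$ by boxes, organized by an induction on the combinatorial complexity of $\hat\alpha_n\cap\Omega$, showing that after subtracting the contributions already known to agree for $\nu_1$ and $\nu_2$ the surviving term is precisely $\nu(H_{k_0})$.

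Once (a) and (b) are available the theorem follows. I suspect that Otal's original argument avoids much of this bookkeeping by passing to an equivalent language: geodesic-flow-invariant (transverse) measures on $T^1S$, in which $i(\nu,\mu_\alpha)$ becomes the flux of the invariant measure across $\alpha$ and an invariant measure is pinned down by its fluxes across closed geodesics, or transverse cocycles in the sense of Bonahon. Either reformulation would let one replace the delicate multiplicity analysis above by a cleaner inversion, but the box-counting scheme just described is the route I would take.
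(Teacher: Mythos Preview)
The paper does not prove this statement at all: Theorem~\ref{Otal} is quoted from \cite{Otal} as a known result, with no argument given, and is used only as input to the Duchin--Leininger--Rafi metric. So there is no ``paper's own proof'' to compare your attempt against.

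As for your sketch on its own merits: the overall architecture (reduce to box masses, approximate a generic transversal segment by closed geodesics, read off the box mass from intersection numbers) is the right kind of idea, but as you yourself say, the crucial isolation step---extracting $\nu(B(I,J))$ from the bundled quantity $i(\nu,\mu_{\alpha_n})$---is not carried out. Your proposed lemmas (a) and (b) are genuine obligations, not formalities: producing closed geodesics whose self-intersections are confined to a prescribed region, and then running an inclusion--exclusion that actually terminates, both require real work that is absent here. Otal's original argument indeed proceeds differently, via the Liouville/geodesic-flow picture you allude to at the end, and avoids this bookkeeping; if you want a self-contained proof you would be better served following that route (or the transverse-cocycle formulation in Bonahon's later papers) rather than the direct box-counting scheme.
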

\noindent Recently, in \cite{DLR}, Duchin-Leininger-Rafi used Theorem \ref{Otal} to construct a metric on $Curr(S)$, which will be crucial in our argument. 
\begin{thm}\cite{DLR} Let $\beta$ be a filling set of simple closed curves. Enumerate all the closed curves on $S$ by $\{\alpha_{1},\alpha_{2},\alpha_{3}\dotsc\}$. Let $x_{k}=\dfrac{\alpha_{k}}{i(\alpha_{k},\beta)}$, then
\[
d(\nu_{1},\nu_{2})=\sum\limits_{k=1}^{\infty}\frac{1}{2^{k}}|i(\nu_{1},x_{k})-i(\nu_{2},x_{k})|
\]
defines a proper metric which is compatible with the weak* topology. 
\end{thm}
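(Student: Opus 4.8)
The plan is to verify, in this order, that $d$ is a well-defined finite-valued metric, that it induces the weak* topology, and that it is proper. The tools I would use throughout are Bonahon's compactness criterion (Proposition \ref{binding}), the continuity and bilinearity of the intersection form $i(,)$ on $Curr(S)$, and Otal's rigidity theorem (Theorem \ref{Otal}); note that the enumeration $\{\alpha_1,\alpha_2,\dots\}$ of all closed curves is countable, since free homotopy classes of closed curves correspond to conjugacy classes in $\pi_1(S)$.

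First I would record two preliminary observations. Write $\mu_\beta=\sum_{i=1}^m\mu_{\beta_i}$ for the binding current associated with the filling system $\beta=\{\beta_1,\dots,\beta_m\}$. Since $\beta$ is filling, $i(\alpha_k,\beta)=i(\mu_{\alpha_k},\mu_\beta)>0$ for every closed curve $\alpha_k$, so each $x_k$ is a genuine current, and by homogeneity of $i(,)$ we have $i(x_k,\mu_\beta)=1$. Thus every $x_k$ lies in the \emph{unit sphere} $\Sigma:=\{\mu\in Curr(S):i(\mu,\mu_\beta)=1\}$, which is compact by Proposition \ref{binding}. Continuity of $\mu\mapsto i(\nu,\mu)$ on the compact set $\Sigma$ then bounds $i(\nu,x_k)$ uniformly in $k$ by some $C_\nu$, so $d(\nu_1,\nu_2)\le C_{\nu_1}+C_{\nu_2}<\infty$ and the defining series converges. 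Symmetry and the triangle inequality hold term by term; and $d(\nu_1,\nu_2)=0$ forces $i(\nu_1,x_k)=i(\nu_2,x_k)$, hence $i(\nu_1,\alpha_k)=i(\nu_2,\alpha_k)$, for all $k$, so $\nu_1=\nu_2$ by Otal's Theorem \ref{Otal} since $\{\alpha_k\}$ is the set of all closed curves. So $d$ is a metric.

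Next I would prove that $d$ induces the weak* topology. Since the weak* topology on $Curr(S)$ is metrizable (as recalled above), it suffices to match up convergent sequences. For the easy direction, if $\nu_n\to\nu$ weak* then $i(\nu_n,\mu_\beta)\to i(\nu,\mu_\beta)$, so the $\nu_n$ all lie in a fixed compact set $\mathcal K=\{\mu:i(\mu,\mu_\beta)\le M\}$; continuity of $i(,)$ on the compact product $\mathcal K\times\Sigma$ gives a uniform bound $|i(\nu_n,x_k)-i(\nu,x_k)|\le 2C$, and since each term tends to $0$, splitting the series into a finite head and a small tail yields $d(\nu_n,\nu)\to 0$. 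For the converse, $d(\nu_n,\nu)\to 0$ gives $i(\nu_n,\alpha_k)\to i(\nu,\alpha_k)$ for every $k$, hence (taking $\alpha_k$ among the $\beta_i$) the $\nu_n$ eventually lie in some compact $\mathcal K$; if $\nu_n\not\to\nu$ weak*, I would extract a subsequence bounded away from $\nu$, pass to a weak* limit $\mu\in\mathcal K$ with $\mu\ne\nu$, observe $i(\mu,\alpha_k)=i(\nu,\alpha_k)$ for all $k$, and obtain the contradiction $\mu=\nu$ from Otal. This step carries the real content: it is the only part that is not formal, and it genuinely interlocks Proposition \ref{binding} with Theorem \ref{Otal}; I expect it to be the main obstacle.

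Finally, properness. Here the key point is that the curves $\beta_i$ themselves occur in the enumeration, say $\beta_i=\alpha_{k(i)}$, so that $\mu_{\beta_i}=i(\alpha_{k(i)},\beta)\,x_{k(i)}$ and hence $i(\nu,\mu_\beta)=\sum_{i=1}^m i(\alpha_{k(i)},\beta)\,i(\nu,x_{k(i)})$ by bilinearity. If $\nu\in\overline B(\nu_0,r)$, then for each $i$ the $k(i)$-th term of the series defining $d$ gives $i(\nu,x_{k(i)})\le i(\nu_0,x_{k(i)})+2^{k(i)}r$, and summing the finitely many $i$ bounds $i(\nu,\mu_\beta)$ by a constant $M=M(\nu_0,r,\beta)$. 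Thus $\overline B(\nu_0,r)$ is a closed subset of the compact set $\{\mu:i(\mu,\mu_\beta)\le M\}$ from Proposition \ref{binding}, so closed bounded sets are compact and $d$ is proper. Once the observations in the second paragraph are in place, the metrizability and properness should be routine; the delicate implication remains $d$-convergence $\Rightarrow$ weak*-convergence from the third paragraph.
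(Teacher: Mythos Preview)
The paper does not prove this theorem: it is quoted from \cite{DLR} as a tool and no proof is given in the present paper. So there is nothing to compare against in this source; your proposal is a reconstruction of the \cite{DLR} argument, and as such it is essentially correct.

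A couple of minor remarks. Your key structural observation is exactly the right one: the normalized curves $x_k$ all lie on the compact ``sphere'' $\Sigma=\{\mu:i(\mu,\mu_\beta)=1\}$ (closed in the weak* topology by continuity of $i$, and contained in the Bonahon-compact ball $\{i(\cdot,\mu_\beta)\le 1\}$), which is precisely what makes the series well defined and what drives the dominated-convergence style tail estimate. The second key observation, that the curves $\beta_i$ themselves appear in the enumeration, is what powers both properness and the harder implication ``$d$-convergence $\Rightarrow$ weak*-convergence,'' since it lets you trap any $d$-Cauchy or $d$-bounded family inside a Bonahon-compact set before invoking Otal. In your properness paragraph you implicitly use that the two topologies already agree (so that a $d$-closed subset of a weak*-compact set is $d$-compact); this is fine because you established the equivalence of topologies first, but it is worth making the dependence explicit. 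With these small clarifications the argument is complete.
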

In general one can define a geodesic current on an oriented surface $S$ of finite type, but here we restrict our attention to compact surfaces with $b$ boundary components and give the definition for this particular case. Given a compact surface $S$ with $b$ boundary components, think of it as a subset of a complete, hyperbolic surface $S'$, obtained from $S$ by attaching $b$ \emph{flaring ends}, see Figure \ref{Flare}.  

\begin{figure}[h]
\centering
\includegraphics[trim=1cm 2cm 6cm 4cm, clip=true, totalheight=0.4\textheight, angle=95]{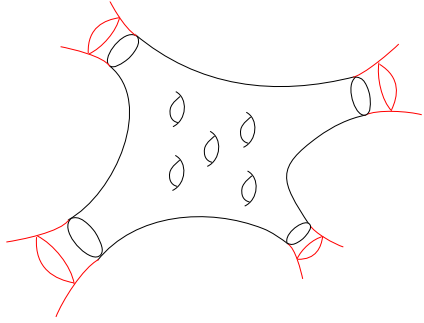}
\caption{Attaching flaring ends\label{Flare}}
\end{figure}

A \emph{geodesic current} on $S$ is a geodesic current on $S'$, a locally finite Borel measure on $G(\tilde{S'})$ which is $\pi_{1}(S')$ invariant, with the property that  support of this measure projects into $S$. 

An alternative definition can be given as follows: Let $S$ be an oriented surface of genus $g\geq1$ with $b\ge1$ boundary components. Consider the double of $S$, $DS$, which is a closed, oriented surface obtained by gluing two copies of $S$ along their boundaries by the identity map. We equip $DS$ with a hyperbolic metric such that $S$ has geodesic boundary. A geodesic current on $S$ can be thought as a geodesic current on $DS$ whose support projects entirely into $S$. Building upon work of Bonahon \cite{Bo86}, Duchin-Leininger-Rafi \cite{DLR} showed that intersection number on $Curr(S)$ is in fact the restriction of the intersection number defined on $Curr(DS)$. \\ For a detailed discussion of geodesic currents on surfaces we refer reader to \cite{Bo86,Bo88,DLR}.


\section{Dynamics on Surfaces}

Let $S$ be a compact surface of \emph{genus} $g\ge1$ with $b\ge1$ boundary components and let $f:S\rightarrow S$ be a \emph{pseudo-Anosov} map which fixes $\partial S$ pointwise. Let $\varphi\in Out(\pi_1(S))=Out(F_N)$ be the induced outer automorphism. Let us denote the \emph{stable measured lamination} corresponding to $f$ by $\mu_{+}$ and the \emph{unstable measured lamination} by $\mu_{-}$.  We will always assume that $i(\mu_{-},\mu_{+})=1$. Also fix a hyperbolic metric on $S$ and denote the geodesic boundary components of $S$ by $\alpha_1,\alpha_2,\dotsc\alpha_b$. We will extend the map $f$ on $S$  to double surface $DS$ by taking the identity map on the other side and will continue to denote our extension to $DS$ by $f$. Also we will adhere to the point of view that a geodesic current on $S$ is a geodesic current on $DS$ whose support projects entirely into $S$. Let $\mu_{\alpha_{i}}$ be the current corresponding to the boundary curve $\alpha_i$. 

Let us define $\Delta, H_{0}(f), H_{1}(f)\subset\mathbb{P}Curr(S)$ as follows:
\[
\Delta:=\{[a_1\mu_{\alpha_1}+a_2\mu_{\alpha_2}+\dotsc+a_b\mu_{\alpha_b}]\ |\ a_i\ge0,  \sum_{i=1}^{b}a_i>0 \}.
\]
\[
H_{0}(f):=\{[t_1\mu_{-}+t_2\nu]\ | [\nu]\in\Delta\}\ and\ H_{1}(f):=\{[t'_1\mu_{+}+t'_2\nu]\ |  [\nu]\in\Delta\}.
\]

First, we will prove a general result about geodesic currents which
will be useful throughout this article. 
\begin{prop}\label{Hset}
Let $f$ be a pseudo-Anosov homeomorphism on a surface $S$ with $b$ boundary components $\{\alpha_1,\alpha_2,\dotsc,\alpha_b\}$. Let $[\mu_{+}]$ and $[\mu_{-}]$ be the corresponding stable and unstable laminations for $f$.  Assume that $\nu\in Curr(S)$ is a geodesic current on $S$ such that $i(\nu,\mu_{+})=0$. Then,
\[
 \nu=a_{1}\mu_{\alpha_1}+a_2\mu_{\alpha_2}+\dotsc+a_b\mu_{\alpha_b}+c\mu_{+}
\]
for some $a_{i}, c\geq0$. Similarly, if $i(\nu,\mu_{-})=0$, then
\[
\nu=a'_{1}\mu_{\alpha_1}+a'_2\mu_{\alpha_2}+\dotsc+a'_b\mu_{\alpha_b}+c'\mu_{-}
\]
for some $a'_{i}, c'\geq0$.
\end{prop}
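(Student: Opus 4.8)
The plan is to analyze the structure of a geodesic current $\nu$ on $S$ with $i(\nu,\mu_+)=0$ by working in the double $DS$ and exploiting the dynamics of the pseudo-Anosov $f$. First I would recall that the stable lamination $\mu_+$ is a \emph{filling} lamination on the surface obtained from $S$ by collapsing/capping (or, in the $DS$ picture, $\mu_+$ together with the boundary curves $\alpha_1,\dots,\alpha_b$ is filling), so that a geodesic in $\widetilde{DS}$ disjoint from $\widetilde{\mu_+}$ and supported in $S$ is very constrained. Concretely, the condition $i(\nu,\mu_+)=0$ means that $\mu_+$-almost no leaf crosses a $\nu$-generic geodesic; since $\mu_+$ fills $S$ rel boundary, any geodesic in the support of $\nu$ that is not asymptotic to a leaf of $\mu_+$ must be freely homotopic into a neighborhood of $\partial S$, i.e. must be (a boundary-parallel curve) one of the $\alpha_i$, or else a leaf of $\mu_+$ itself, or a geodesic with both endpoints on the boundary at infinity coinciding with endpoints of $\mu_+$-leaves. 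The goal is to conclude that the support of $\nu$ decomposes into the support of $\mu_+$ and the boundary geodesics.

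Next I would make this decomposition rigorous using the intersection-number characterization of currents (Otal's Theorem \ref{Otal}) together with the metric of Duchin--Leininger--Rafi. The key computation is: for \emph{every} closed curve $\alpha$ on $S$, compute $i(\nu,\alpha)$ and compare it with $i\bigl(\sum a_i\mu_{\alpha_i}+c\mu_+,\alpha\bigr)$ for the correct choice of coefficients. To extract the coefficients $a_i$ and $c$, I would use the pseudo-Anosov dynamics: apply $f^n$ (which scales $\mu_+$ by $\lambda^n$ and fixes each $\mu_{\alpha_i}$) and use uniform control to show that $\nu - c\mu_+$ has zero intersection with $\mu_+$ \emph{and} with $\mu_-$, hence (by the standard fact that $\mu_+$ and $\mu_-$ jointly bind $S$ rel boundary, i.e. $\mu_+ + \mu_- + \sum\mu_{\alpha_i}$ is a binding current on $DS$ supported in $S$) is supported entirely on boundary leaves. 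More precisely: a current $\rho$ supported in $S$ with $i(\rho,\mu_+)=i(\rho,\mu_-)=0$ can only have support on geodesics disjoint from both laminations; since $\mu_+\cup\mu_-$ fills $S$, such geodesics are exactly the boundary geodesics $\alpha_1,\dots,\alpha_b$, so $\rho=\sum a_i\mu_{\alpha_i}$. Thus I would first isolate the $\mu_+$-part of $\nu$ (the coefficient $c$ being, say, the mass $\nu$ assigns to a transversal of $\mu_+$, or extracted as $\lim \langle v,\nu\rangle/\langle v,\mu_+\rangle$ along paths tracking $\mu_+$), subtract it off, check the difference still has zero intersection with $\mu_+$, observe it automatically has zero intersection with $\mu_-$ since its support avoids $\mu_-$'s filling complement, and then invoke the filling argument.

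The main obstacle I anticipate is the \emph{decomposition step}: showing that a current on $DS$ supported in $S$ whose support consists only of geodesics disjoint from the filling lamination $\mu_+$ must literally be a nonnegative combination of $\mu_{\alpha_i}$ and $\mu_+$ — one has to rule out "diagonal" leaves (geodesics with one or both endpoints equal to endpoints of leaves of $\mu_+$ but not themselves leaves), and more subtly, one must show the restriction of $\nu$ to the leaves of $\mu_+$ is itself a \emph{transverse measure} for $\mu_+$ and hence (by unique ergodicity of the pseudo-Anosov stable lamination, which holds here even with boundary) a scalar multiple of $\mu_+$. This is where I would lean hardest on Bonahon's structure theory of geodesic currents on surfaces \cite{Bo86,Bo88} and on unique ergodicity of $\mu_+$; the rest is bookkeeping with the intersection form and Otal's rigidity theorem. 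The statement for $\mu_-$ is identical by symmetry ($f \leftrightarrow f^{-1}$, $\mu_+ \leftrightarrow \mu_-$).
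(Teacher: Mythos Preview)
Your outline is broadly on the right track---the support of $\nu$ must consist of geodesics that do not cross leaves of $\mu_+$ transversely, and the task is to show these are exactly leaves of $\mu_+$ and boundary geodesics---but the middle portion of your plan (Otal's theorem, the Duchin--Leininger--Rafi metric, applying $f^n$ to extract coefficients) is a detour that does not touch the actual difficulty. In particular, your step ``observe $\nu - c\mu_+$ automatically has zero intersection with $\mu_-$ since its support avoids $\mu_-$'s filling complement'' is circular: you only know the support avoids the complementary regions of $\mu_-$ \emph{after} you have proved the proposition, and if instead you define $c = i(\nu,\mu_-)$ to force $i(\nu - c\mu_+,\mu_-)=0$, you have no reason to believe $\nu - c\mu_+$ is a nonnegative measure. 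The intersection-number machinery does not by itself rule out the ``diagonal'' leaves you correctly flag as the obstacle.

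The paper's argument is more direct and avoids all of this. It cuts $S$ along $\mu_+$ into ideal polygons and crowns, and observes that any geodesic $\ell$ in the support of $\nu$ lying in such a complementary region is \emph{isolated} in $G(\tilde S)$ (the set of geodesics in a single complementary polygon or crown is discrete). Hence $\ell$ carries a positive atom of $\nu$. But the projection of $\ell$ to $S$ is a bi-infinite geodesic in a compact surface, so it recurs: choose a short transversal $L$ that it crosses infinitely often, lift $L$ to $\tilde L$, and note that infinitely many $\pi_1(S)$-translates of $\ell$ meet $\tilde L$. These translates form a compact subset of $G(\tilde S)$ carrying infinite $\nu$-mass, contradicting local finiteness. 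This single paragraph replaces your entire second step.

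You are right, however, that one still needs unique ergodicity of $\mu_+$ to finish: once the support of $\nu$ is contained in (leaves of $\mu_+$) $\cup$ (boundary geodesics), the restriction of $\nu$ to the leaves of $\mu_+$ is a transverse invariant measure on that lamination, and it is a nontrivial fact (true for stable laminations of pseudo-Anosovs) that this forces it to be $c\mu_+$. The paper's proof asserts the conclusion at this point without naming unique ergodicity explicitly, so your instinct to invoke it is well placed; it just belongs at the very end, not as part of an intersection-number scheme.
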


\begin{proof}
We will prove the first part of the proposition. The proof of the second
part is similar. Here we use the structure of geodesic laminations and in
particular the structure of the stable and unstable laminations. For a
detailed discussion see \cite{CB,Th,Lein}.

Let $S$ be a surface with $b\ge1$ boundary components and $DS$ the double of $S$.  
Let $\mu_{+}$ be the stable lamination corresponding to the pseudo-Anosov map $f$. Assume that for a geodesic current $\nu$, the intersection number $i(\nu,\mu_{+})=0$.
We'll now investigate possible geodesics in the support of $\nu$. Since $i(\nu,\mu_{+})=0$, it follows  from the definition of Bonahon's intersection form that projection of any geodesic in the support of $\nu$ onto
$S$ cannot intersect the leaves of the lamination transversely. Indeed, such an intersection would contribute to intersection number positively. Therefore, for each geodesic $\ell$ in the support of $\nu$ one of the following happens:  
\begin{enumerate} 
\item $\ell$ projects onto a leaf of the stable lamination or a boundary curve, or
\item $\ell$ projects onto a geodesic that is disjoint from the leaves of the stable lamination or the boundary curves. 
\end{enumerate}
If (1) happens for every geodesic $\ell$ in the support of $\nu$, then there is nothing to prove.  If not, then $\ell$ projects to a geodesic, which is disjoint from the stable lamination. So if we cut $S$ along the stable lamination to get ideal polygons or crowns \cite{CB} (see Figure \ref{compregions}), then one of these contains the image of the geodesic $\ell$. 

\begin{figure}[h]
\centering
\includegraphics[trim=0cm 13cm 0cm 1cm, clip=true, totalheight=0.3\textheight, angle=0]{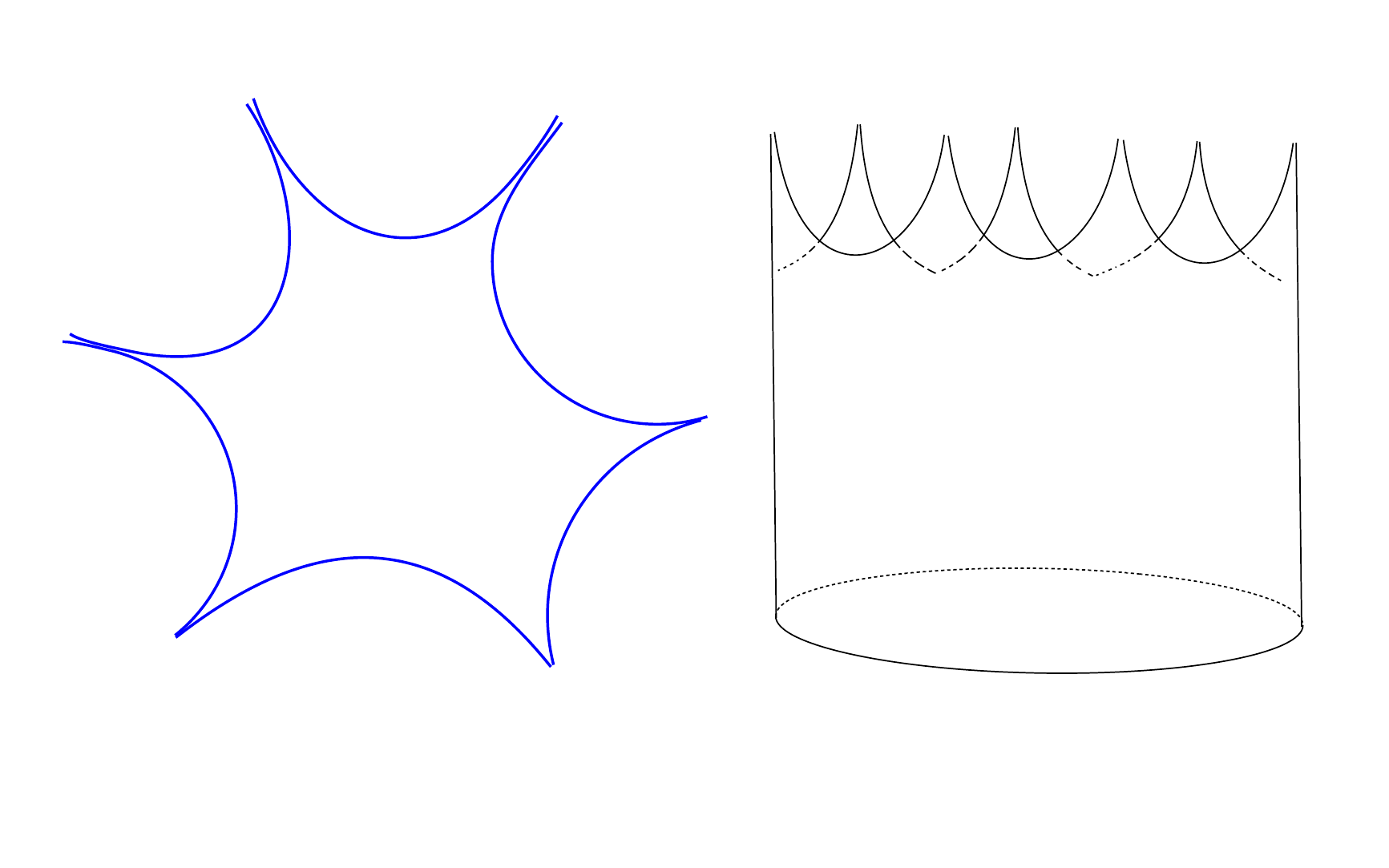}
\caption{Complementary Regions\label{compregions}}
\end{figure}

From here, it follows that there is a geodesic $\ell'$ (possibly different than $\ell$) in the support of $\nu$ which is \emph{isolated}, i.e. there is some open set $U$ in $G(\tilde{S})$, which intersects the support of $\nu$ in the single point $\{\ell'\}$.  The geodesic $\ell'$ must have positive mass assigned by $\nu$. This geodesic will project onto a biinfinite geodesic in $S$, which must therefore accumulate since $S$ is compact. Take a short geodesic segment $L$ in $S$, which the projection of $\ell'$ intersects infinitely often. Consider a lift $\tilde{L}$ of $L$ to $\tilde{S}$. There are infinitely many translates of $\ell'$ that intersect $\tilde{L}$ which also form a compact set. By the $\pi_1(S)$-invariance of the measures, this set must have infinite measure, which contradicts the fact that $\nu$ is a locally finite Borel measure. Therefore (2) does not happen. So as a result, $\nu$ will be a combination of some multiple
of the stable lamination and a linear combination of currents corresponding to the boundary curves. 
\end{proof}
\noindent The following lemma is due to N. Ivanov.
\begin{lem}\cite{Iva}\label{Ivanov} Let $\mu$ be a measured geodesic lamination on $DS$, if $[\mu]\notin H_{0}(f)$ then 
\[
\lim_{n\to\infty}\lambda^{-n}f^{n}(\mu)=i(\mu,\mu_{-})\mu_{+},
\]
where $\lambda$ is the dilatation.
Similarly if $[\mu]\notin H_{1}(f)$, then 
\[
\lim_{n\to\infty}\lambda^{-n}f^{-n}(\mu)=i(\mu,\mu_{+})\mu_{-}.
\]
\end{lem}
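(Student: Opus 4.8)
This is essentially a result of Ivanov \cite{Iva}; the plan is to read off the precise rescaled limit from the Perron--Frobenius data underlying Thurston's north--south dynamics, after two preliminary remarks. I would work in the space $\mathcal{ML}(S)$ of measured laminations supported in $S$, on which the intersection form and the $f$-action restrict from $DS$, and recall that $i(\mu_+,\mu_-)=1$, that $f$ scales $\mu_+$ by $\lambda$ and $\mu_-$ by $\lambda^{-1}$ (equivalently $f^{-1}\mu_-=\lambda\mu_-$), and that $f$ fixes each boundary current $\mu_{\alpha_i}$. The first point is that, by Proposition \ref{Hset}, the hypothesis $[\mu]\notin H_0(f)$ is equivalent to $i(\mu,\mu_-)>0$, so the asserted limit $i(\mu,\mu_-)\mu_+$ really is a nonzero measured lamination. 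The second is a reduction: writing $\mu=\mu'+\sum_i b_i\mu_{\alpha_i}$ with $\mu'$ carrying no boundary-parallel leaves, one has $\lambda^{-n}f^{n}\mu=\lambda^{-n}f^{n}\mu'+\lambda^{-n}\sum_i b_i\mu_{\alpha_i}$ with second summand tending to $0$ (as $\lambda>1$), while $i(\mu,\mu_-)=i(\mu',\mu_-)$; so I may assume $\mu$ has no boundary-parallel leaves, in which case $[\mu]\notin H_0(f)$ forces $\mu\neq 0$ and $[\mu]\neq[\mu_-]$.

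\textbf{The core computation.} Using standard train track theory I would fix a birecurrent train track $\tau$ in $S$ invariant under a suitable representative of $f$ (i.e.\ $f(\tau)$ carried by $\tau$), carrying $\mu_+$, and such that the cone $PL(\tau)\subset\mathbb{P}\mathcal{ML}(S)$ of projective laminations carried by $\tau$ is a \emph{neighborhood} of $[\mu_+]$; this uses that $\mu_+$ is filling and arational on $S$. Then $f$ maps the set of laminations carried by $\tau$ into itself, acting on branch-weight vectors by a nonnegative integral transition matrix $A$ which, for a pseudo-Anosov map, is primitive (after possibly passing to a power of $f$, recombined below) with Perron--Frobenius eigenvalue $\lambda$; its positive right eigenvector $v_+$ is precisely the weight vector of $\mu_+$, because $f\mu_+=\lambda\mu_+$. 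By primitivity, $\lambda^{-n}A^{n}\to v_+u^{\top}$, the rank-one projection onto the $\lambda$-eigenspace, where $u$ is the left eigenvector with $u^{\top}v_+=1$; hence for $\mu=\mu_w$ carried by $\tau$ with weight vector $w$ one gets $\lambda^{-n}f^{n}\mu\to(u^{\top}w)\mu_+$, using continuity of the weight-to-lamination map and the weak$^*$ topology. It remains to identify the scalar, which is the crux: the linear functional $w\mapsto i(\mu_w,\mu_-)$ on the weight cone satisfies $i(\mu_{Aw},\mu_-)=i(f\mu_w,\mu_-)=i(\mu_w,f^{-1}\mu_-)=\lambda\,i(\mu_w,\mu_-)$, so it is represented by a left $\lambda$-eigenvector of $A$; since it takes the value $i(\mu_+,\mu_-)=1$ on $v_+$, it equals $u^{\top}(\cdot)$. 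Thus $\lambda^{-n}f^{n}\mu\to i(\mu,\mu_-)\mu_+$ for every $\mu$ carried by $\tau$.

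\textbf{Conclusion.} For $\mu$ reduced as above, Thurston's north--south dynamics on $\mathbb{P}\mathcal{ML}(S)$ (\cite{Th}; for bordered surfaces, see \cite{Iva}) gives $[f^{n}\mu]\to[\mu_+]$, so, $PL(\tau)$ being a neighborhood of $[\mu_+]$, we have $f^{N}\mu\in PL(\tau)$ for some $N\ge 0$. Applying the core computation to $f^{N}\mu$ and using $f$-invariance of $i(\,,\,)$ together with $f^{-N}\mu_-=\lambda^{N}\mu_-$,
\[
\lambda^{-(n+N)}f^{n+N}\mu=\lambda^{-N}\lambda^{-n}f^{n}(f^{N}\mu)\;\longrightarrow\;\lambda^{-N}i(f^{N}\mu,\mu_-)\mu_+=i(\mu,\mu_-)\mu_+,
\]
because $\lambda^{-N}i(f^{N}\mu,\mu_-)=\lambda^{-N}\lambda^{N}i(\mu,\mu_-)=i(\mu,\mu_-)$; since $\{n+N:n\ge 0\}$ is cofinite in $\mathbb{N}$ this yields $\lim_{m}\lambda^{-m}f^{m}\mu=i(\mu,\mu_-)\mu_+$. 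If a power $f^{k}$ was used to make $A$ primitive, the same formula (with the same constant, since $i(\mu_+,\mu_-)=1$ is unchanged) holds for $f^{k}$, and writing $m=kq+r$ with $0\le r<k$ and invoking $\lambda^{-r}f^{r}\mu_+=\mu_+$ and continuity of $\lambda^{-r}f^{r}$ shows every residue subsequence converges to the same limit, hence so does the full sequence. The backward statement follows by applying everything to $f^{-1}$, which swaps $\mu_+\leftrightarrow\mu_-$ and $H_0(f)\leftrightarrow H_1(f)$.

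\textbf{Main difficulty.} The two nonroutine ingredients are (i) producing an $f$-invariant train track whose carried cone is an honest neighborhood of $[\mu_+]$ in $\mathbb{P}\mathcal{ML}(S)$ --- standard for an arational filling lamination, but needing some care on a bordered surface (say via diagonal extension to a maximal birecurrent track) --- and (ii) the identification of the left Perron--Frobenius eigenvector of $A$ with the functional $i(\,\cdot\,,\mu_-)$, which is exactly what forces the constant $i(\mu,\mu_-)$ in the limit. Everything else --- the boundary reduction, $f$-equivariance of the intersection form, the power recombination, and the deduction of the $f^{-1}$ statement --- is routine. As an alternative to invoking Thurston's projective dynamics, one could instead establish precompactness of $\bigl(\lambda^{-n}f^{n}\mu\bigr)_{n}$ directly from Proposition \ref{binding} and identify all subsequential limits via Otal's theorem (Theorem \ref{Otal}); this, however, does not remove the train track computation at the core.
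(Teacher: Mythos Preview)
The paper does not prove this lemma; it is quoted from Ivanov \cite{Iva} and used as a black box, so there is no in-paper argument to compare yours against.

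Your train-track/Perron--Frobenius sketch is the standard route (essentially Ivanov's) and is correct for measured laminations supported in $S$; the identification of the left Perron eigenvector of the transition matrix with the functional $i(\,\cdot\,,\mu_-)$ is exactly the right way to pin down the constant. There is, however, a scope mismatch you should fix. The lemma is stated for measured laminations on the \emph{double} $DS$, on which the extended $f$ is the identity on the second copy $S'$, and it is applied in that generality later in the paper (to the filling multicurve $\beta$ on $DS$ and to the curves $x_j$ on $DS$ in the proofs of Lemma~\ref{projconv} and Theorem~\ref{NSG}). Your explicit restriction to $\mathcal{ML}(S)$ does not cover this. Relatedly, your preliminary claim that $[\mu]\notin H_0(f)\Longleftrightarrow i(\mu,\mu_-)>0$ via Proposition~\ref{Hset} is only valid for currents on $S$: a lamination supported entirely in $S'$ has $i(\mu,\mu_-)=0$ while $[\mu]\notin H_0(f)$, and there the asserted limit is the zero lamination, not a nonzero multiple of $\mu_+$. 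The passage from $S$ to $DS$ is not automatic---$f$ is not pseudo-Anosov on $DS$, and curves on $DS$ may cross $\partial S$---so some additional argument (e.g.\ testing the limit against simple closed curves on $DS$ via Theorem~\ref{Otal}, handling separately those in $S$, those in $S'$, and those crossing the boundary using a relative/arc version of your $S$-computation) is required but not supplied in your write-up.
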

The first step toward a generalization of the Lemma \ref{Ivanov} for actions of pseudo-Anosov homeomorphisms on the space of projective geodesic currents can be stated as follows: 

\begin{lem}\label{projconv}
Let $[\nu]\notin H_{0}(f)$ be an arbitrary geodesic current, then 
\[
\lim_{n\to\infty}f^{n}[\nu]=[\mu_{+}].
\]
\end{lem}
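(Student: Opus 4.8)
The plan is to bootstrap from Ivanov's Lemma \ref{Ivanov}, which gives the needed convergence for measured laminations, to arbitrary geodesic currents by means of the intersection-number metric of Duchin--Leininger--Rafi together with the compactness of $\mathbb{P}Curr(S)$ (Proposition \ref{binding}). First I would fix the binding (filling) current $\beta=\sum\mu_{\beta_i}$ used to define the metric $d$, and recall that $[\nu_n]\to[\nu]$ in $\mathbb{P}Curr(S)$ is equivalent to the convergence of the normalized intersection numbers $i(\nu_n,x_k)/i(\beta,x_k)$, or — more conveniently for this argument — to the existence of scalars $c_n>0$ with $c_n\nu_n\to\nu$ in $Curr(S)$ in the weak* topology, which by Otal's theorem (Theorem \ref{Otal}) amounts to $c_n\, i(\nu_n,\alpha)\to i(\nu,\alpha)$ for every closed curve $\alpha$.

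The core computation is to understand $i(f^n\nu,\alpha)$ for a fixed closed curve $\alpha$. Using $f$-invariance of the intersection form on $DS$ (i.e. $i(f^n\nu,\alpha)=i(\nu,f^{-n}\alpha)$) one is led to study the sequence $\lambda^{-n}f^{-n}(\mu_\alpha)$; but $\alpha$ is a closed curve, hence $\mu_\alpha\in\mathcal{ML}(S)\subset\mathcal{ML}(DS)$, so Lemma \ref{Ivanov} applies directly provided $[\mu_\alpha]\notin H_1(f)$, giving $\lambda^{-n}f^{-n}\mu_\alpha\to i(\mu_\alpha,\mu_+)\mu_-$. Plugging this in yields $\lambda^{-n}i(f^n\nu,\alpha)\to i(\mu_\alpha,\mu_+)\,i(\nu,\mu_-)$. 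The hypothesis $[\nu]\notin H_0(f)$ is precisely what guarantees, via Proposition \ref{Hset}, that $i(\nu,\mu_-)>0$ (if $i(\nu,\mu_-)=0$ then $\nu$ would be a nonnegative combination of the $\mu_{\alpha_i}$ and $\mu_-$, i.e. $[\nu]\in H_0(f)$). Also $i(\mu_\alpha,\mu_+)>0$ for all but the countably many curves $\alpha$ with $[\mu_\alpha]\in H_1(f)$ — and in fact since $\mu_+$ is filling in $DS$ relative to $S$, one checks $i(\mu_\alpha,\mu_+)>0$ for every essential non-peripheral curve. Setting $c_n=\lambda^{-n}/i(\nu,\mu_-)$, this gives $c_n\, i(f^n\nu,\alpha)\to i(\mu_\alpha,\mu_+)=i(\mu_+,\alpha)$ for each such $\alpha$, and one must also handle the boundary curves $\alpha_i$ separately (there $i(f^n\nu,\alpha_i)=i(\nu,\alpha_i)$ is constant since $f$ fixes $\partial S$, so $c_n\, i(f^n\nu,\alpha_i)\to 0 = i(\mu_+,\alpha_i)$, which is consistent).

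The remaining issue is that pointwise convergence of intersection numbers $i(\cdot,\alpha)$ for each curve $\alpha$ does not a priori upgrade to weak* convergence $c_n f^n\nu\to\mu_+$ without some control; this is where compactness enters. Here is the clean route: by Proposition \ref{binding}, to extract a convergent subsequence of $c_n f^n\nu$ one needs $i(c_n f^n\nu,\beta)$ bounded, which holds since $i(c_nf^n\nu,\beta)\to i(\mu_+,\beta)<\infty$. Any weak* subsequential limit $\nu_\infty$ satisfies $i(\nu_\infty,\alpha)=i(\mu_+,\alpha)$ for every closed curve $\alpha$, hence $\nu_\infty=\mu_+$ by Theorem \ref{Otal}; since every subsequence has a further subsequence converging to $\mu_+$, the whole sequence $c_n f^n\nu$ converges to $\mu_+$ in $Curr(S)$, and therefore $f^n[\nu]\to[\mu_+]$ in $\mathbb{P}Curr(S)$. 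The main obstacle I anticipate is the bookkeeping around which curves $\alpha$ are legitimately covered by Lemma \ref{Ivanov} (i.e. ruling out $[\mu_\alpha]\in H_1(f)$) and making sure the compactness argument is stated for the right sequence with a uniform intersection bound; once those are in place the proof is essentially the combination of Ivanov's lemma, Proposition \ref{Hset}, Otal's theorem, and Bonahon's compactness. The second statement of Lemma \ref{projconv} (if it appears) would follow by replacing $f$ with $f^{-1}$ and swapping the roles of $\mu_+,\mu_-$ and $H_0,H_1$.
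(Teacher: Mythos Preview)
Your argument is correct but follows a genuinely different route from the paper's.

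The paper proves the lemma by: (1) taking an arbitrary subsequential projective limit $[\nu_*]$ of $(f^{n_k}[\nu])$, (2) identifying the correct scaling as $\lambda^{-n_k}$ via an intersection computation with a filling curve $\beta$, (3) showing $i(\nu_*,\mu_+)=0$ so that Proposition~\ref{Hset} forces $\nu_*=t\mu_0+s\mu_+$ with $\mu_0$ a boundary current, (4) showing $s>0$ by computing $i(\nu_*,\mu_-)=i(\nu,\mu_-)\neq 0$, and then (5) a diagonalization contradiction: if some subsequential limit had $t>0$ (i.e.\ $[\nu_*]\neq[\mu_+]$), then $f^{-m}[\nu_*]\to[\mu_0]$, and re-indexing would produce a subsequence $f^{j_m}[\nu]\to[\mu_0]$, violating (4).

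Your approach instead applies Ivanov's Lemma curve by curve: for each closed curve $\alpha$ on $DS$ with $[\mu_\alpha]\notin H_1(f)$ you get $\lambda^{-n}i(f^n\nu,\alpha)=i(\nu,\lambda^{-n}f^{-n}\mu_\alpha)\to i(\nu,\mu_-)\,i(\mu_+,\alpha)$, handle the boundary curves separately, and then invoke Bonahon's compactness (Proposition~\ref{binding}) together with Otal's Theorem~\ref{Otal} to upgrade pointwise convergence of intersection numbers to weak* convergence $\lambda^{-n}f^n\nu\to i(\nu,\mu_-)\mu_+$. This is more direct, avoids the diagonalization, and in fact establishes the stronger unprojectivized statement of Theorem~\ref{PNS} in one stroke; the paper first proves Lemma~\ref{projconv} and then deduces Theorem~\ref{PNS} from it. The trade-off is that you rely on Otal's Theorem already at this stage, whereas the paper's proof of Lemma~\ref{projconv} uses only Proposition~\ref{Hset} and defers the Otal/DLR machinery to the uniform statement Theorem~\ref{NSG}.

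One small inaccuracy that does not affect the argument: it is not true that $i(\mu_\alpha,\mu_+)>0$ for every essential non-peripheral curve on $DS$, since curves lying entirely in the second copy of $S$ satisfy $i(\mu_\alpha,\mu_+)=0$. But you never actually use positivity---all you need is $c_n\,i(f^n\nu,\alpha)\to i(\mu_+,\alpha)$, which holds whether or not the right-hand side vanishes.
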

\begin{proof} Considering $\mathbb{P}Curr(S)\subset\mathbb{P}Curr(DS)$ as a closed subset, since $\mathbb{P}Curr(DS)$ is compact, there exist a subsequence $\{n_{k}\}$ such that $\lim_{n_{k}\to\infty}f^{n_{k}}[\nu]$ exists. This means that there is a sequence $\{w_{n_{k}}\}$ of positive real numbers such that
\[
\lim_{n_{k}\to\infty}w_{n_{k}}f^{n_{k}}(\nu)=\nu_{*}\ne0
\]
where $\nu_*\in\mathbb{P}Curr(S)$. 
Let $\beta$ be a filling set of simple closed curves in $DS$. We have
\begin{align*} 0\ne C & = i(\nu_{*},\beta)\\ & =i(\lim_{n_{k}\to\infty}w_{n_{k}}f^{n_{k}}(\nu),\beta) \\
 & =\lim_{n_{k}\to\infty}i(w_{n_{k}}\nu,f^{-n_k}(\beta))\\
 & =\lim_{n_{k}\to\infty}w_{n_{k}}\lambda^{n_{k}}i(\nu,\lambda^{-n_{k}}f^{-n_{k}}(\beta))\\
 & =\lim_{n_{k}\to\infty}w_{n_{k}}\lambda^{n_{k}}\lim_{n_{k}\to\infty}i(\nu,\lambda^{-n_{k}}f^{-n_{k}}(\beta)))\\
 & =\lim_{n_{k}\to\infty}w_{n_{k}}\lambda^{n_{k}}i(\nu,\mu_{-})i(\mu_{+},\beta).
\end{align*}
From here we can deduce that
\[
\lim_{n_{k}\to\infty}w_{n_{k}}\lambda^{n_{k}}=C_{1}\neq0,
\]
since $i(\nu,\mu_{-})\neq0$ and $i(\mu_+,\beta)\neq0$. This means that without loss of generality we can choose $w_{n_{k}}=\lambda^{-n_{k}}$. Now, look at 
\begin{align*} i(\nu_{*},\mu_{+}) & =i(\lim_{n_{k}\to\infty}\lambda^{-n_{k}}f^{n_{k}}(\nu),\mu_{+})\\
 & =\lim_{n_{k}\to\infty}i(\nu,\lambda^{-n_{k}}f^{-n_{k}}(\mu_{+}))\\
 & =\lim_{n_{k}\to\infty}\lambda^{-2n_{k}}i(\nu,\mu_{+})\\
 & =0.
\end{align*}
Therefore, by Proposition \ref{Hset}, 
\[
\nu_{*}=t\mu_{0}+s\mu_{+}
\] where $t,s\ge0,\ t+s>0$ and $\mu_{0}$ is a non-negative, non-trivial linear combination of currents corresponding to boundary curves. 
\begin{claim} $s\neq0$, i.e $\nu_{*}\neq t\mu_{0}$.
\end{claim}
We will show that $\nu_{*}$ has non-zero intersection number with $\mu_{-}$ which will imply that $\nu_{*}\neq\mu_{0}$. 
\begin{align*}i(\nu_{*},\mu_{-}) & =i(\lim_{n_{k}\to\infty}\lambda^{-n_{k}}f^{n_{k}}(\nu),\mu_{-})\\
 & =\lim_{n_{k}\to\infty}i(\nu,\lambda^{-n_{k}}f^{-n_{k}}(\mu_{-}))\\
 & =\lim_{n_{k}\to\infty}i(\nu,\mu_{-})\\
 & =i(\nu,\mu_{-})\neq0,
\end{align*}
since $\nu\notin H_{0}$. 
\begin{claim} $\lim_{n\to\infty}f^{n}[\nu]=[\mu_{+}]$.
\end{claim}
\noindent Assume not, then there exists a subsequence $\{n_{k}\}$ such that 
\[
\lim_{n_{k}\to\infty}f^{n_{k}}[\nu]=[\nu_{*}]\neq[\mu_{+}].
\]
First observe that
\[
\lim_{m\to\infty}f^{-m}[\nu_{*}]=[\mu_{0}],
\]
since 
\[
f^{-m}[t\mu_{0}+s\mu_{+}]=[t\mu_{0}+\lambda^{-m}s\mu_{+}].
\]
We also have,  $\forall i>0$
\[ \lim_{n_{k}\to\infty}f^{n_{k}-i}[\nu]=f^{-i}[\nu_{*}].
\]
Let  $d'$ be a metric on $\mathbb{P}Curr(DS)$ which induces the quotient topology from the weak-* topology.
Then, $\forall m\geq1$ there exist an integer $i_{m}\ge1$ such that 
\[
d'([\mu_{0}],f^{-i_{m}}[\nu_{*}])<\frac{1}{2m}.
\]
Since $\lim_{n_{k}\to\infty}f^{n_{k}-i_{m}}[\nu]=f^{-i_{m}}[\nu_{*}]$, 
pick $n_{k}=n_{k}(m)$ such that 
\[
d'(f^{-i_{m}}[\nu_{*}],f^{n_{k}-i_{m}}[\nu])<\frac{1}{2m}\ and\  n_{k}-i_{m}>m.
\]
Now, set $j_{m}=n_{k}-i_{m}$, then by triangle inequality we have 
\[
d'(f^{j_{m}}[\nu],[\mu_{0}])<\frac{1}{m},
\]
which implies 
$\lim_{j_{m}\to\infty}f^{j_{m}}[\nu]=[\mu_{0}]$, which contradicts with the previous claim.   
This completes the proof of the lemma.   
\end{proof}

\begin{thm}\label{PNS}
Let $f$ be a pseudo-Anosov homeomorphism on a compact surface $S$. Let $[\nu]\notin H_{0}(f)$ be a geodesic current. Then, 
\[
\lim_{n\to\infty}\lambda^{-n}f^{n}(\nu)=i(\nu,\mu_{-})\mu_{+},
\]
where $\mu_{+}$ and $\mu_{-}$   are the stable and the unstable laminations for $f$. Similarly, for a geodesic current $[\nu]\notin H_1(f)$, we have \[
\lim_{n\to\infty}\lambda^{-n}f^{-n}(\nu)=i(\nu,\mu_+)\mu_{-}.
\]
\end{thm}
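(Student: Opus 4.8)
\emph{Strategy.} The plan is to upgrade the projective convergence already established in Lemma~\ref{projconv} to the precise scaling limit using one boundedness estimate together with a single intersection-number computation. I will work inside $Curr(DS)$ throughout, using that Bonahon's intersection form $i(\cdot,\cdot)$ is continuous and bilinear, that $f$ scales the invariant laminations as $f^{-n}\mu_{+}=\lambda^{-n}\mu_{+}$ and $f^{-n}\mu_{-}=\lambda^{n}\mu_{-}$, and that $f$ fixes every boundary current $\mu_{\alpha_i}$. Note first that $[\nu]\notin H_{0}(f)$ forces $i(\nu,\mu_{-})>0$: if $i(\nu,\mu_{-})=0$ then Proposition~\ref{Hset} writes $\nu=\sum a'_i\mu_{\alpha_i}+c'\mu_{-}$, so $[\nu]\in H_{0}(f)$.

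\emph{Step 1: precompactness of the orbit.} Fix a filling set $\beta$ of simple closed curves on $DS$, so $\mu_{\beta}$ is a binding current; then $i(\mu_{\beta},\mu_{+})>0$, so $[\mu_{\beta}]\notin H_{1}(f)$ and Lemma~\ref{Ivanov} applies to $\mu_{\beta}$. Since $i\big(f^{n}(\nu),\mu_{\beta}\big)=i\big(\nu,f^{-n}(\mu_{\beta})\big)$ and $i(\nu,\cdot)$ is continuous,
\[
i\big(\lambda^{-n}f^{n}(\nu),\mu_{\beta}\big)=i\big(\nu,\lambda^{-n}f^{-n}(\mu_{\beta})\big)\ \longrightarrow\ i(\mu_{\beta},\mu_{+})\,i(\nu,\mu_{-}),
\]
so these quantities are bounded, say by $M$. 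By Proposition~\ref{binding} the set $\{\lambda^{-n}f^{n}(\nu):n\ge1\}$ lies in the compact set $\{\eta:i(\eta,\mu_{\beta})\le M\}$, hence it is precompact in $Curr(DS)$.

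\emph{Step 2: identifying subsequential limits, and conclusion.} Let $\nu_{*}$ be the limit of a subsequence $\lambda^{-n_{k}}f^{n_{k}}(\nu)$. Applying Lemma~\ref{Ivanov} to $\mu_{-}$ (legitimate since $i(\mu_{-},\mu_{+})=1\ne0$ gives $[\mu_{-}]\notin H_{1}(f)$),
\[
i(\nu_{*},\mu_{-})=\lim_{k\to\infty}i\big(\nu,\lambda^{-n_{k}}f^{-n_{k}}(\mu_{-})\big)=i\big(\nu,i(\mu_{-},\mu_{+})\mu_{-}\big)=i(\nu,\mu_{-})>0,
\]
so $\nu_{*}\ne0$. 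By Lemma~\ref{projconv}, $\lim_{n}f^{n}[\nu]=[\mu_{+}]$, hence $[\nu_{*}]=\lim_{k}\lambda^{-n_{k}}f^{n_{k}}[\nu]=[\mu_{+}]$, so $\nu_{*}=c\,\mu_{+}$ for some $c>0$. Pairing with $\mu_{-}$ and using $i(\mu_{+},\mu_{-})=1$ with the displayed identity gives $c=i(\nu,\mu_{-})$. Thus every subsequential limit equals $i(\nu,\mu_{-})\mu_{+}$, and by Step~1 the whole sequence converges to $i(\nu,\mu_{-})\mu_{+}$. The statement for negative powers follows verbatim, applying Lemmas~\ref{Ivanov} and \ref{projconv} to the pseudo-Anosov $f^{-1}$, whose stable and unstable laminations are $\mu_{-}$ and $\mu_{+}$ and whose forbidden set is $H_{1}(f)$.

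\emph{Main obstacle.} The one genuinely delicate point is ruling out boundary terms in $\nu_{*}$: a current $\sum a_{i}\mu_{\alpha_{i}}+c\mu_{+}$ has the same intersection number as $c\mu_{+}$ with each of $\mu_{+}$, $\mu_{-}$, and every $\mu_{\alpha_{j}}$, so those pairings alone cannot force the $a_i$ to vanish. This is why the argument must be routed through Lemma~\ref{projconv} — which, via Proposition~\ref{Hset} and its cut-surface/isolated-leaf analysis, already rules out any surviving boundary component at the projective level. Everything else is bookkeeping with the bilinearity and continuity of $i(\cdot,\cdot)$.
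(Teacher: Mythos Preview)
Your proof is correct and rests on the same two ingredients as the paper's: the projective convergence of Lemma~\ref{projconv}, and a single pairing with $\mu_{-}$ (using $f^{-n}\mu_{-}=\lambda^{n}\mu_{-}$ and $i(\mu_{+},\mu_{-})=1$) to pin down the constant. The organization, however, differs. You fix the scaling $\lambda^{-n}$ from the outset, establish precompactness of $\{\lambda^{-n}f^{n}(\nu)\}$ via Proposition~\ref{binding}, and then identify every subsequential limit as $i(\nu,\mu_{-})\mu_{+}$. The paper instead reads off from Lemma~\ref{projconv} a sequence $w_{n}>0$ with $w_{n}f^{n}(\nu)\to\mu_{+}$ in $Curr(DS)$ (projective convergence automatically supplies such a lift), computes $\lim_{n}w_{n}\lambda^{n}=1/i(\nu,\mu_{-})$ by pairing with $\mu_{-}$, and concludes $\lambda^{-n}f^{n}(\nu)=\dfrac{w_{n}}{w_{n}\lambda^{n}}f^{n}(\nu)\to i(\nu,\mu_{-})\mu_{+}$. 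This sidesteps your Step~1 entirely and is a bit shorter; your version has the advantage of making the role of Proposition~\ref{binding} explicit and of isolating exactly why boundary terms cannot appear in the limit.
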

\begin{proof}
Lemma \ref{projconv} implies that there exits a sequence $\{w_{n}\}$ of positive real numbers such that $\lim_{n\rightarrow\infty}w_{n}f^{n}(\nu)=\mu_{+}$. Recall that we assumed $i(\mu_{+},\mu_{-})=1$. 
Hence, we have 
\begin{align*}
1& =i(\lim_{n\rightarrow\infty}w_{n}f^{n}(\nu),\mu_{-})\\
 & =\lim_{n\rightarrow\infty}i(w_{n}\nu,f^{-n}(\mu_{-}))\\
 & =\lim_{n\rightarrow\infty}w_{n}\lambda^{n}i(\nu,\mu_{-}),
\end{align*}
which implies that $\lim_{n\to\infty}w_n\lambda^{n}=\dfrac{1}{i(\nu,\mu_-)}$. Therefore, we have
\[
\lim_{n\to\infty}\lambda^{-n}f^{n}(\nu)=\lim_{n\to\infty}\frac{w_n}{w_n\lambda^{n}}f^{n}(\nu)=i(\nu,\mu_{-})\mu_{+}.
\]
The proof of the second assertion is similar. 
\end{proof}
Now we are ready to prove the main theorem of this section.  
\begin{thm}\label{NSG}
Let $f$ be a pseudo-Anosov homeomorphism on a compact surface with boundary. Assume that $K$ is a compact set in $\mathbb{P}Curr(S)\setminus H_{0}(f)$. Then for any open neighborhood $U$ of the stable current $[\mu_{+}]$, there exist $m\in\mathbb{N}$ such that $f^{n}(K)\subset U$ for all $n\geq m$. Similarly, for a compact set $K'\subset\mathbb{P}Curr(S)\setminus H_{1}(f)$ and an open neighborhood $V$ of the unstable current $[\mu_{-}]$, there exist $m'\in\mathbb{N}$ such that $f^{-n}(K')\subset V$ for all $n\ge m'$.  
\end{thm}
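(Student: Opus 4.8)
The plan is to establish the (formally stronger) fact that
\[
\Omega:=\{[\mu_*]\in\mathbb{P}Curr(S)\ :\ [\mu_*]=\lim_{k}f^{n_k}[\nu_k]\text{ for some }[\nu_k]\in K,\ n_k\to\infty\}
\]
equals $\{[\mu_+]\}$; the theorem then follows by a routine compactness argument. Indeed, if it failed there would be an open $U\ni[\mu_+]$, integers $n_k\to\infty$ and $[\nu_k]\in K$ with $f^{n_k}[\nu_k]\notin U$, and passing to a subsequence (using compactness of $K$ and of $\mathbb{P}Curr(S)$) we would get a point of $\Omega$ in the closed set $\mathbb{P}Curr(S)\setminus U$, hence different from $[\mu_+]$. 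The second assertion of the theorem is obtained by applying the first to the pseudo-Anosov map $f^{-1}$, which again fixes $\partial S$ pointwise and for which the roles of $[\mu_+],[\mu_-]$ and of $H_0,H_1$ are interchanged.

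First, $\Omega\ne\emptyset$ (take any $[\nu]\in K$ and a convergent subsequence of $f^n[\nu]$), and $\Omega$ is closed by a standard diagonal argument: if $[\mu^{(j)}]\to[\mu]$ with $[\mu^{(j)}]\in\Omega$, pick $[\rho_j]\in K$ and $m_j>j$ with $f^{m_j}[\rho_j]$ within $1/j$ of $[\mu^{(j)}]$, so $f^{m_j}[\rho_j]\to[\mu]$. Since $f$ preserves $Curr(S)$, $\Omega\subseteq\mathbb{P}Curr(S)$. Clearly $f(\Omega)\subseteq\Omega$, and conversely, given $[\mu_*]=\lim_k f^{n_k}[\nu_k]\in\Omega$, pass to a subsequence so that $f^{n_k-1}[\nu_k]\to[\mu']$; then $[\mu']\in\Omega$ and $f[\mu']=[\mu_*]$. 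Hence $f(\Omega)=\Omega$, so $\Omega$ is invariant under $f$ and $f^{-1}$.

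The heart of the proof is to show $\Omega\subseteq H_1(f)\setminus\Delta$. Fix a filling system $\beta$ of simple closed curves on $DS$ as in the proof of Lemma~\ref{projconv}, so that $\mu_\beta$ is binding and $\lambda^{-n}f^{-n}(\mu_\beta)\to i(\mu_\beta,\mu_+)\mu_-$ with $i(\mu_\beta,\mu_+)>0$. Fix $[\mu_*]\in\Omega$ and choose representatives $\nu_k$ with $i(\nu_k,\mu_\beta)=1$; passing to a subsequence, $\nu_k\to\nu_\infty$ in $Curr(S)$ with $[\nu_\infty]\in K$, so $i(\nu_\infty,\mu_-)>0$ because $K\cap H_0(f)=\emptyset$ (by Proposition~\ref{Hset}, $i(\sigma,\mu_-)=0$ forces $[\sigma]\in H_0(f)$). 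Using the naturality $i(f\sigma,f\tau)=i(\sigma,\tau)$ together with $f\mu_+=\lambda\mu_+$ and $f\mu_-=\lambda^{-1}\mu_-$, one computes $i(\lambda^{-n_k}f^{n_k}(\nu_k),\mu_-)=i(\nu_k,\mu_-)$, $i(\lambda^{-n_k}f^{n_k}(\nu_k),\mu_+)=\lambda^{-2n_k}i(\nu_k,\mu_+)$ and $i(\lambda^{-n_k}f^{n_k}(\nu_k),\mu_\beta)=i(\nu_k,\lambda^{-n_k}f^{-n_k}(\mu_\beta))$; letting $k\to\infty$ and using joint continuity of Bonahon's form these tend respectively to $i(\nu_\infty,\mu_-)>0$, to $0$, and to $i(\mu_\beta,\mu_+)\,i(\nu_\infty,\mu_-)>0$. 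In particular the currents $\lambda^{-n_k}f^{n_k}(\nu_k)$ have bounded intersection with the binding current $\mu_\beta$, so by Proposition~\ref{binding} they lie in a compact subset of $Curr(S)$; passing to a further subsequence, $\lambda^{-n_k}f^{n_k}(\nu_k)\to\omega$ with $\omega\ne 0$, $[\omega]=[\mu_*]$, $i(\omega,\mu_+)=0$ and $i(\omega,\mu_-)=i(\nu_\infty,\mu_-)>0$. By Proposition~\ref{Hset}, $\omega=\sum_i a_i\mu_{\alpha_i}+c\mu_+$ with $a_i\ge 0$ and $c=i(\omega,\mu_-)>0$; thus $[\mu_*]\in H_1(f)$, and since $c>0$ it is not in $\Delta$. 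Hence $\Omega\subseteq H_1(f)\setminus\Delta$, so in particular $\Omega\cap\Delta=\emptyset$.

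Finally, by the previous paragraph any $[\mu]\in\Omega$ can be written $[\mu]=[\sum_i a_i\mu_{\alpha_i}+c\mu_+]$ with $a_i\ge 0$, $c>0$. Since $f$ fixes each boundary curve pointwise we have $f\mu_{\alpha_i}=\mu_{\alpha_i}$, so $f^{-n}(\sum_i a_i\mu_{\alpha_i}+c\mu_+)=\sum_i a_i\mu_{\alpha_i}+c\lambda^{-n}\mu_+$. If $\sum_i a_i\mu_{\alpha_i}\ne 0$ then $f^{-n}[\mu]\to[\sum_i a_i\mu_{\alpha_i}]\in\Delta$ as $n\to\infty$; but $f^{-n}[\mu]\in\Omega$ for all $n$ and $\Omega$ is closed, contradicting $\Omega\cap\Delta=\emptyset$. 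Therefore $\sum_i a_i\mu_{\alpha_i}=0$ and $[\mu]=[\mu_+]$, so $\Omega=\{[\mu_+]\}$, as required. The step I expect to be the main obstacle is the third paragraph: one cannot push the limit through $f^{n_k}$ directly, since $f^{n_k}$ distorts $Curr(S)$ without uniform control, and the point is to renormalize to $\lambda^{-n_k}f^{n_k}(\nu_k)$ and exploit joint continuity of Bonahon's intersection form together with the known convergence $\lambda^{-n}f^{-n}(\mu_\beta)\to i(\mu_\beta,\mu_+)\mu_-$, which together pin down every accumulation point.
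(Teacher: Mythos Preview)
Your proof is correct and takes a genuinely different route from the paper's. The paper first establishes the pointwise convergence $\lambda^{-n}f^{n}(\nu)\to i(\nu,\mu_-)\mu_+$ for each $[\nu]\notin H_0(f)$ (Lemma~\ref{projconv} and Theorem~\ref{PNS}), and then upgrades this to uniform convergence on the cross-section $\{\nu:i(\nu,\mu_-)=1\}$ by an explicit $\epsilon/2$--splitting estimate in the Duchin--Leininger--Rafi metric on $Curr(DS)$: the tail of the defining series is controlled by a uniform bound on $|i(\mu_+,x_k)-i(\lambda^{-n}f^{n}(\nu),x_k)|$, and each of the finitely many remaining terms is driven to zero using $\lambda^{-n}f^{-n}(x_j)\to i(\mu_+,x_j)\mu_-$. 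Your argument bypasses both the pointwise step and the DLR metric: you work directly with the omega-limit set $\Omega$ of $K$ under forward iteration, use the same renormalization $\lambda^{-n_k}f^{n_k}(\nu_k)$ and the same core ingredients (Ivanov's lemma for the multicurve $\mu_\beta$, Proposition~\ref{binding}, Proposition~\ref{Hset}, joint continuity of Bonahon's form) to show $\Omega\subseteq H_1(f)\setminus\Delta$, and then finish with the clean dynamical observation that $\Omega$ is closed and $f^{-1}$-invariant, while backward iteration pushes any point of $H_1(f)\setminus\{[\mu_+]\}$ toward $\Delta$, forcing $\Omega=\{[\mu_+]\}$. Your approach is more conceptual and self-contained; the paper's approach, in exchange, yields the marginally stronger quantitative statement that $\lambda^{-n}f^{n}(\nu)\to\mu_+$ uniformly in $Curr(DS)$ over the normalized section, not merely in $\mathbb{P}Curr(S)$.
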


\begin{proof} We will prove the first statement. The proof of the second one is similar. To prove this theorem we utilize the metric on the space of geodesic currents introduced by Duchin-Leininger-Rafi in \cite{DLR}. Let $d$ be the metric on $Curr(DS)$ as discussed in Section \ref{2.2}. Let $\{\alpha_0,\alpha_1,\alpha_2,\dotsc\}$ be an enumeration of all the closed curves on $DS$.  Let us set $x_{k}=\dfrac{\alpha_{k}}{i(\alpha_{k},\beta)}$ where $\beta$ is a filling set of simple closed curves on $DS$.  

Let us take a cross section $\bar{K}$ of $K$ in $Curr(S)$ by picking the representative $\nu\in  Curr(S)$ with $i(\nu,\mu_{-})=1$ for any $[\nu]\in K$.  From Theorem \ref{PNS}, we know that 
\[
\lim_{n\to\infty}\lambda^{-n}f^{n}(\nu)=i(\nu,\mu_{-})\mu_{+}=\mu_{+}
\]
So it suffices to show that for any $\epsilon>0$, there exist $m>0$ such that 
\[
d(\lambda^{-n}f^{n}(\nu),\mu_{+})<\epsilon
\]
for all $n\geq m$ and for all $\nu\in\bar{K}$. 

\begin{claim*}
\[
|i(\mu_{+},x_{k})-i(\lambda^{-n}f^{n}(\nu),x_{k})|
\]
is uniformly bounded $\forall\nu\in\bar{K}$, $\forall k\geq1$, $\forall n\geq1$.
\end{claim*}
\noindent By triangle inequality $|i(\mu_{+},x_{k})-i(\lambda^{-n}f^{n}(\nu),x_{k})|\le|i(\mu_{+},x_{k})|+|i(\lambda^{-n}f^{n}(\nu),x_{k})|$, so it suffices to bound the two quantities on the right. Since $\{x_{k}\}$ is precompact we have 
\[
i(\mu_{+},x_{k})\leq R_{0}
\]
for some $R_{0}>0$. \\
For the second quantity we have 
\[
i(\lambda^{-n}f^{n}(\nu),x_{k})=i(\nu,\lambda^{-n}f^{-n}(x_{k}))
\]
Since $\nu$ comes from a compact set, it suffices to show that $\{\lambda^{-n}f^{-n}(x_{k})\}$ is precompact. Now we have,
\[
i(\lambda^{-n}f^{-n}(x_{k}),\beta)=i(x_{k},\lambda^{-n}f^{n}(\beta))\leq R_{1}
\]
for some $R_{1}>0$ since $\{x_{k}\}$ is precompact and $\lim_{n\to\infty}\lambda^{-n}f^{n}(\beta)=i(\beta,\mu_{-})\mu_{+}$. Therefore by proposition \ref{binding}, $\{\lambda^{-n}f^{n}(x_{k})\}$ is precompact, and hence the second quantity is uniformly bounded, and the claim follows. 

By the claim there is some $R>0$ so that 
\[
|i(\mu_{+},x_{k})-i(\lambda^{-n}f^{n}(\nu),x_{k})|\leq R.
\]
Now we have;
\begin{equation}\label{distance}
d(\mu_{+},\lambda^{-n}f^{n}(\nu))\leq\sum\limits_{j=1}^M \frac{1}{2^{j}}|i(\mu_{+},x_{j})-i(\lambda^{-n}f^{n}(\nu),x_{j})|+\sum\limits_{j=M+1}^\infty \frac{1}{2^{j}}R. 
\end{equation}
The second sum can be made as small as we want by choosing $M$ big enough, so we make it less than $\epsilon/2$.  We need to show that first sum goes to 0 uniformly over all $\nu\in\bar{K}$. Since there are only finitely many terms we will show that for a fixed $x_{j}$  
\[
|i(\mu_{+},x_{j})-i(\lambda^{-n}f^{n}(\nu),x_{j})|\to 0
\]
uniformly $\forall\nu\in\bar{K}$. Now, because  $i(\nu,\mu_-)=1$, we have 
\begin{align*}|i(\mu_{+},x_{j})-i(\lambda^{-n}f^{n}(\nu),x_{j})| & =|i(\nu,\mu_{-})i(\mu_{+},x_{j})-i(\nu,\lambda^{-n}f^{-n}(x_{j}))|\\
& =|i(\nu,i(\mu_{+},x_{j})\mu_{-})-i(\nu,\lambda^{-n}f^{-n}(x_{j}))|.
\end{align*} 

Since $\nu$ comes from a compact set, we can choose a small neighborhood $V$ of $i(\mu_{+},x_{j})\mu_{-}$ such that for any $\nu'\in V$
\[
|i(\nu,i(\mu_{+},x_{j})\mu_{-})-i(\nu,\nu')|<\epsilon/2M
\]
for all $\nu\in\bar{K}$. 
We already know by Theorem \ref{PNS} that
\[
\lim_{n\to\infty}\lambda^{-n}f^{-n}(x_{j})=i(\mu_{+},x_{j})\mu_{-}.
\]
So there exist $m\in\mathbb{N}$ such that for all $n\ge m$ one has $\lambda^{-n}f^{-n}(x_{j})\in V$ and therefore
\[
|i(\nu,i(\mu_{+},x_{j})\mu_{-})-i(\nu,\lambda^{-n}f^{-n}(x_{j}))|<\epsilon/2M. 
\]
Hence, by \eqref{distance} we have
\[
d(\mu_{+},\lambda^{-n}f^{n}(\nu))\leq\epsilon
\]
for all $n\ge m$ and for all $\nu\in\bar{K}$.
\end{proof}

\section{Dynamics of non-atoroidal, fully irreducible automorphisms}

For a non-atoroidal iwip $\varphi\in Out(F_N)$, using a theorem of Bestvina and Handel we will be able to transfer the question about the dynamics of the action of $\varphi$ on $\mathbb{P}Curr(F_N)$ to a problem in surface theory. Using the result we established in the previous section,  we will prove a variant of uniform North-South dynamics on the space of geodesic currents for non-atoroidal iwips. 

The result of Bestvina-Handel we need is the following.
\begin{thm}\cite{BH92}\label{BH92} 
Let $\varphi\in Out(F_N)$. Then $\varphi$ is a \emph{non-atoroidal iwip} if and only if $\varphi$ is induced by a pseudo-Anosov homeomorphism $f$ of a compact surface $S$ with one boundary component and $\pi_1(S)\cong F_N$.
\end{thm}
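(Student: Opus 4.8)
The plan is to prove the two implications separately, with essentially all of the difficulty in the ``only if'' direction.

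For the (easier) ``if'' direction, suppose $\varphi$ is induced by a pseudo-Anosov homeomorphism $f$ of a compact surface $S$ with one boundary component and $\pi_1(S)\cong F_N$. After arranging that $f$ fixes $\partial S$, the conjugacy class $[\partial S]\in F_N$ is $\varphi$-invariant, so $\varphi$ has a nontrivial periodic conjugacy class and is non-atoroidal. To see that $\varphi$ is an iwip, I would argue by contradiction: if $\varphi^{k}$ fixed the conjugacy class of a proper free factor $A\le F_N$, then I would use that the stable lamination $\mu_{+}$ of $f$ (equivalently, the attracting algebraic lamination of $\varphi$) is minimal and fills $S$; realizing a basis of $A$ in a spine of $S$ and thickening, a $\varphi^{k}$-invariant proper free factor would produce an $f^{k}$-invariant isotopy class of an essential proper subsurface (or a sublamination carried by it), which no pseudo-Anosov map can preserve. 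Equivalently and more combinatorially, a power of $\varphi$ has a train-track representative whose transition matrix and all its powers are Perron--Frobenius (this is the pseudo-Anosov property seen on a Markov/train-track structure), while a periodic proper free factor would give a proper invariant subgraph of a train-track representative of some power, contradicting primitivity.

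For the (harder) ``only if'' direction, suppose $\varphi$ is a non-atoroidal iwip. Here I would invoke Bestvina--Handel train-track theory: represent $\varphi$ by an irreducible train-track map $f\colon G\to G$ on a finite graph with $\pi_1(G)\cong F_N$, no valence-one vertices, Perron--Frobenius transition matrix, and dilatation $\lambda>1$. Since $\varphi$ is non-atoroidal it has a nontrivial periodic conjugacy class; tracing this through the structure of periodic points, gates, and illegal turns of $f$ (using bounded cancellation) one extracts a closed indivisible periodic Nielsen path $\rho$ in $G$, which after replacing $\varphi$ by a power may be taken fixed. The structural heart is then to show that full irreducibility forces $G$, together with the cyclic orderings of turns coming from the local Whitehead graphs of $f$ at its vertices, to be the spine of an orientable ribbon graph whose single complementary region is an annulus with core curve represented by $\rho$. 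This ribbon structure determines a compact surface $S$ with $\partial S$ connected and $\pi_1(S)\cong\pi_1(G)\cong F_N$; the map $f$ extends to a homeomorphism $\hat f$ of $S$, and one checks that the measured foliations built from the train-track structures of $f$ and of $f^{-1}$ are transverse and filling, with $\hat f$ stretching one by $\lambda$ and contracting the other by $\lambda^{-1}$, so $\hat f$ is pseudo-Anosov; a short final argument (or the observation that the periodic Nielsen path is unique up to the evident symmetry) promotes the conclusion from a power of $\varphi$ back to $\varphi$ itself.

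I expect the main obstacle to be exactly the ribbon-graph step in the ``only if'' direction: one must show that fattening $G$ produces \emph{exactly one} complementary annulus (hence a single boundary component) and that the resulting surface is orientable with $\hat f$ matching $f$ up to isotopy. This is precisely where full irreducibility, rather than mere irreducibility, is essential — it forces the attracting lamination and the periodic Nielsen path to be unique and filling, which pins down the planar/ribbon combinatorics of the local Whitehead graphs at the vertices of $G$. The careful bookkeeping of gates, illegal turns, and planarity of these local graphs is the technical crux.
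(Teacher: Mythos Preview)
The paper does not prove this statement: Theorem~\ref{BH92} is quoted from Bestvina--Handel \cite{BH92} and used as a black box, so there is no proof in the paper to compare your proposal against.

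As an independent assessment: your sketch follows the genuine architecture of the Bestvina--Handel argument, and you correctly flag the ribbon-graph/single-boundary step as the crux. Two places where the sketch is thinner than an actual proof requires: first, passing from ``$\varphi$ has a nontrivial periodic conjugacy class'' to ``there is a closed indivisible periodic Nielsen path in the train track'' is not automatic --- one needs the full structure theory of periodic Nielsen paths in an irreducible train track (they decompose into INPs meeting only at fixed points, and the iwip hypothesis is what forces uniqueness); second, the conclusion that the fattened graph has \emph{exactly one} complementary annulus and is orientable is not a soft consequence of ``the lamination is filling'' but comes from a careful count relating the number of illegal turns, fixed directions, and INPs to the Euler characteristic of the would-be surface. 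Your ``if'' direction is essentially fine, though the cleanest way to rule out a periodic proper free factor is to note that no proper free factor of $\pi_1(S)$ is carried by a $\pi_1$-injective subsurface when $\partial S$ is connected, while a pseudo-Anosov has no invariant proper subsurfaces.
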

\begin{rem} Note that with the  definition we gave at the end of Section \ref{Prelim}, a geodesic current on $F_{N}$ is precisely a geodesic current on a surface $S$ with $\pi_{1}(S)=F_{N}$. Therefore, $Curr(F_{N})=Curr(S)$. 
\end{rem}
For $\varphi$ and $f$ as in Theorem \ref{BH92}, define $H_0(\varphi):=H_0(f), H_1(\varphi)=H_1(f)\subset\mathbb{P}Curr(S)=\mathbb{P}Curr(F_N)$.
Combining the above remark and Theorem \ref{BH92}, a special case of Theorem \ref{NSG} for one boundary component gives the following:

\begin{thm} \label{NStoroidal} Let $\varphi\in Out(F_{N})$ be a non-atoroidal iwip. Then the action of $\varphi$ on the space of projectivized geodesic currents $\mathbb{P}Curr(F_{N})$ has uniform dynamics in the following sense: Given an open neighborhood $U$ of the stable current $[\mu_{+}]$ and a compact set $K_0\subset\mathbb{P}Curr(F_{N})\setminus H_{0}(\varphi)$ there exist a power $M_0>0$ such that for all $n\ge M_0$, $\varphi^{n}(K_0)\subset U$.  Similarly, given an open neighborhood $V$ of the unstable current $[\mu_{-}]$ and a compact set $K_1\subset\mathbb{P}Curr(F_{N})\setminus H_{1}(\varphi)$, there exist a power $M_1>0$ such that for all $m\ge M_1$, $\varphi^{-m}(K_1)\subset V$.
\end{thm}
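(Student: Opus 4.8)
The plan is to deduce Theorem \ref{NStoroidal} directly from the surface case, Theorem \ref{NSG}, via the Bestvina--Handel realization theorem. By Theorem \ref{BH92}, since $\varphi$ is a non-atoroidal iwip, there is a compact surface $S$ with a single boundary component $\alpha$, an identification $\pi_1(S)\cong F_N$, and a pseudo-Anosov homeomorphism $f$ of $S$ (which, as in the setup of Section 3, we may take to fix $\partial S$ pointwise) that induces $\varphi$. The first step is to record, using the Remark following Theorem \ref{BH92}, that under this identification $Curr(F_N)=Curr(S)$ and hence $\mathbb{P}Curr(F_N)=\mathbb{P}Curr(S)$ as topological spaces, and that by definition $H_0(\varphi)=H_0(f)$ and $H_1(\varphi)=H_1(f)$; since $b=1$ here, $\Delta=\{[\mu_\alpha]\}$ is a single point, so these sets are just the projective ``segments'' spanned by $\mu_-,\mu_\alpha$ and by $\mu_+,\mu_\alpha$ respectively.

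The second step is to check that the left action of $\varphi$ on $Curr(F_N)$ agrees, under the above identification, with the push-forward action $\nu\mapsto f_*\nu$ of $f$ on $Curr(S)$. This is where the only genuine care is needed: a lift $\tilde f$ of $f$ to $\tilde S$ acts on $G(\tilde S)$ and is equivariant with respect to the outer automorphism $f_\#=\varphi\in Out(F_N)$, so the induced action of $f$ on locally finite Borel measures on $G(\tilde S)=\partial^2 F_N$ coincides with the action of $\varphi$ on $Curr(F_N)$. In particular the stable and unstable laminations $\mu_+,\mu_-$ of $f$ are precisely the attracting and repelling currents for $\varphi$, and their projective classes are the fixed points of $\varphi$ acting on $\mathbb{P}Curr(F_N)$.

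With these identifications in place, the statement of Theorem \ref{NStoroidal} becomes literally the statement of Theorem \ref{NSG} applied to $f$ on $S$. Indeed, given an open neighborhood $U$ of $[\mu_+]$ and a compact set $K_0\subset\mathbb{P}Curr(F_N)\setminus H_0(\varphi)=\mathbb{P}Curr(S)\setminus H_0(f)$, Theorem \ref{NSG} produces $M_0\in\mathbb{N}$ with $\varphi^n(K_0)=f^n(K_0)\subset U$ for all $n\ge M_0$; applying the second half of Theorem \ref{NSG} to $K_1$, $V$ and $H_1(f)$ gives the symmetric conclusion for negative iterates.

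The main (and essentially only) obstacle is the bookkeeping in the second step: one must set up the dictionary between surface homeomorphisms acting on $Curr(S)$ and outer automorphisms acting on $Curr(F_N)$ carefully enough that $\varphi^n$ and $f^n$ really are the same transformation of $\mathbb{P}Curr(F_N)$, and check that $H_0(\varphi)$ and $H_1(\varphi)$ are independent of the auxiliary choices (the surface $S$, the marking $\pi_1(S)\cong F_N$, and the representative $f$ of $\varphi$). Once this is settled, no further dynamical analysis is required, as all of the content is already contained in Theorem \ref{NSG}.
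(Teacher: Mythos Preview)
Your proposal is correct and follows exactly the paper's approach: the paper deduces Theorem \ref{NStoroidal} as an immediate special case of Theorem \ref{NSG} for $b=1$, using Theorem \ref{BH92} to realize $\varphi$ by a pseudo-Anosov $f$ and the Remark that $Curr(F_N)=Curr(S)$. Your write-up is in fact more careful than the paper's one-line deduction, since you explicitly flag the identification of the $\varphi$-action with the $f$-action on currents, which the paper leaves implicit.
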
 

We complement the above theorem by giving a complete picture in terms of fixed points of the action of a non-atoroidal iwip $\varphi$ on the space of projective geodesic currents. 
\begin{prop}
Let $\varphi$ be a non-atoroidal iwip. Then the action of $\varphi$ on $\mathbb{P}Curr(F_{N})$ has exactly three fixed
points: the stable lamination (current) $[\mu_{+}]$, the unstable lamination (current) $[\mu_{-}]$
and the current corresponding to the boundary curve $[\mu_{\alpha}]$.
\end{prop}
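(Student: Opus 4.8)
The plan is to identify the fixed points by combining the dynamical results already proven with the structural Proposition~\ref{Hset}. The surface $S$ here has a single boundary component $\alpha$, so $\Delta = \{[\mu_\alpha]\}$ is a single point, and the sets $H_0(\varphi), H_1(\varphi)$ are the ``segments'' $\{[t_1\mu_- + t_2\mu_\alpha]\}$ and $\{[t'_1\mu_+ + t'_2\mu_\alpha]\}$ respectively. First I would verify that $[\mu_+]$, $[\mu_-]$ and $[\mu_\alpha]$ are indeed fixed: $\mu_+$ and $\mu_-$ are the stable/unstable laminations of the pseudo-Anosov $f$ inducing $\varphi$, so $f\mu_+ = \lambda\mu_+$ and $f\mu_- = \lambda^{-1}\mu_-$, giving fixed projective classes; and $\alpha$ is the boundary curve, which $f$ fixes (pointwise, by the running assumption in Section~3), so $f\mu_\alpha = \mu_\alpha$. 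Hence all three are fixed points of the $\varphi$-action on $\mathbb{P}Curr(F_N) = \mathbb{P}Curr(S)$.

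Next I would show there are no others. Suppose $[\nu]$ is a fixed point, so $\varphi[\nu] = [\nu]$, i.e. $f\nu = c\nu$ for some $c>0$. The key dichotomy is whether $[\nu]\in H_0(\varphi)\cup H_1(\varphi)$ or not. If $[\nu]\notin H_0(\varphi)$, then by Lemma~\ref{projconv} we have $\lim_{n\to\infty} f^n[\nu] = [\mu_+]$; but $f^n[\nu] = [\nu]$ for all $n$ since $[\nu]$ is fixed, so $[\nu] = [\mu_+]$. Symmetrically, if $[\nu]\notin H_1(\varphi)$ then applying $f^{-n}$ and the second half of Lemma~\ref{projconv} (the $\mu_-$ statement) forces $[\nu] = [\mu_-]$. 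So the only remaining possibility is $[\nu] \in H_0(\varphi)\cap H_1(\varphi)$. Writing $[\nu] = [t_1\mu_- + t_2\mu_\alpha] = [t'_1\mu_+ + t'_2\mu_\alpha]$, I would argue that unless $\nu$ is a multiple of $\mu_\alpha$ we get a contradiction: a nonzero $\mu_-$-component forces $i(\nu, \mu_+) > 0$ (since $i(\mu_-,\mu_+) = 1 \neq 0$ while $i(\mu_\alpha, \mu_+) = 0$ as $\alpha$ is disjoint from the lamination $\mu_+$), whereas a nonzero $\mu_+$-component forces $i(\nu,\mu_-) > 0$; but these two representations also give, via bilinearity, $i(\nu,\mu_+) = t_1 i(\mu_-,\mu_+) = t_1$ and $i(\nu,\mu_+) = 0$ from the other side — so $t_1 = 0$, and symmetrically $t'_1 = 0$. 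Thus $[\nu] = [\mu_\alpha]$. This handles all cases.

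The main obstacle is the bookkeeping in the overlap case $H_0(\varphi)\cap H_1(\varphi)$: one must be careful that the three currents $\mu_+, \mu_-, \mu_\alpha$ are ``linearly independent'' in the appropriate sense so that the two expressions for $[\nu]$ can be compared coefficient-wise. The clean way to do this is entirely through the intersection form: compute $i(\nu,\mu_+)$ and $i(\nu,\mu_-)$ from each representation using bilinearity of $i(\cdot,\cdot)$ (Bonahon) together with the facts $i(\mu_+,\mu_+) = i(\mu_-,\mu_-) = 0$ (laminations have self-intersection zero), $i(\mu_+,\mu_\alpha) = i(\mu_-,\mu_\alpha) = 0$ (the boundary curve is disjoint from both laminations), and $i(\mu_+,\mu_-) = 1$. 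This reduces the step to the short linear-algebra argument above and avoids any appeal to a basis for $Curr(S)$. An alternative, perhaps slicker, route for the overlap case is to invoke the ``exceptional dynamics'' computation stated right after Theorem~\ref{A}: if $[\nu] = [t_1\mu_- + t_2\mu_\alpha]$ with $t_1 > 0$ then $\lim_{n\to\infty} f^n[\nu] = [\mu_\alpha] \neq [\nu]$ (as $[\nu]\neq[\mu_\alpha]$ when $t_1>0$), contradicting fixedness; so $t_1 = 0$ and $[\nu]=[\mu_\alpha]$ — and symmetrically for $H_1(\varphi)$. Either way, the proof is a short assembly of Lemma~\ref{projconv}, Proposition~\ref{Hset}, and elementary properties of the intersection form.
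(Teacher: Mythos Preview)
Your proof is correct and follows essentially the same approach as the paper's: use the convergence in Lemma~\ref{projconv}/Theorem~\ref{NStoroidal} to rule out fixed points off the exceptional set, and a direct computation on the segments $H_0(\varphi), H_1(\varphi)$ to finish. Your case split ($[\nu]\notin H_0$, $[\nu]\notin H_1$, $[\nu]\in H_0\cap H_1$) is organized slightly differently from the paper's ($[\nu]\in H_0\cup H_1$ versus its complement), which lets the dynamics lemma absorb the mixed points of $H_0\cup H_1$ and reduces the direct check to identifying $H_0\cap H_1=\{[\mu_\alpha]\}$; note only that the $\mu_-$ statement you cite is not literally part of Lemma~\ref{projconv} but is the symmetric version (equivalently contained in Theorem~\ref{PNS}).
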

\begin{proof}
A straightforward computation shows that points in $(H_{0}(\varphi)\cup H_{1}(\varphi))$ except $[\mu_+], [\mu_-]$ and $[\mu_\alpha]$ are not fixed. For any other point $[\nu]\in\mathbb{P}Curr(F_{N})\setminus\big(H_0(\varphi)\cup H_1(\varphi)\big)$, Theorem \ref{NStoroidal} and the fact that $[\mu_{+}]\neq[\mu_{-}]$ implies that $\varphi([\nu])\neq[\nu]$.  
\end{proof}

\noindent Recall that the minimal set $\mathcal{M}_N$ in $\mathbb{P}Curr(F_{N})$ is the closure of the set
\[
\{[\eta_{g}]\ |\ g\in F_{N}\ is\ primitive\ element\}
\]
in $\mathbb{P}Curr(F_{N})$. 
Equivalently, $\mathcal{M}_{N}$ is equal to the closure of $Out(F_{N})$ orbit of $[\eta_{g}]$ for a primitive element $g\in F_{N}$.  As a  consequence of Theorem \ref{NSG}, we obtain the following result, which was claimed without proof by R. Martin \cite{Martin}. A sketch of the proof following a different approach was given by Bestvina-Feign in \cite{BF10}. 

\begin{cor}
Let $\varphi\in Out(F_{N})$ be a non-atoroidal iwip  with stable and unstable currents $[\mu_{+}]$ and $[\mu_{-}]$. Then the action of $\varphi$ on $\mathcal{M}_N$ has uniform north-south dynamics. Namely, given a compact set $K_0\subset\mathcal{M}_N\setminus\{[\mu_{-}]\}$ and an open neighborhood $U$ of $[\mu_{+}]$ in $\mathcal{M}_N$, there is an integer $M_0>0$ such that $\varphi^{n}(K_0)\subset U$ for all $n\ge M_0$. Similarly, given a compact set $K_1\subset\mathcal{M}_N\setminus\{[\mu_{+}]\}$ and an open neighborhood $V$ of $[\mu_{-}]$ in $\mathcal{M}_N$, there is an integer $M_1>0$ such that $\varphi^{-m}(K_1)\subset V$ for all $m\ge M_1$.
\end{cor}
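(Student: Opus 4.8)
The plan is to deduce Corollary~\ref{minimalint} (the last statement of the excerpt) from Theorem~\ref{NStoroidal} by restricting the already-established uniform dynamics on $\mathbb{P}Curr(F_N)$ to the closed invariant subset $\mathcal{M}_N$. The essential point is that, inside $\mathcal{M}_N$, the "bad sets" $H_0(\varphi)$ and $H_1(\varphi)$ collapse down to the single points $[\mu_-]$ and $[\mu_+]$ respectively, so the statement of Theorem~\ref{NStoroidal} takes exactly the cleaner north-south form asserted. Concretely, I would first show:
\[
\mathcal{M}_N\cap H_0(\varphi)=\{[\mu_-]\}\quad\text{and}\quad \mathcal{M}_N\cap H_1(\varphi)=\{[\mu_+]\}.
\]
Here $\varphi$ is induced by a pseudo-Anosov $f$ on a surface $S$ with a \emph{single} boundary component $\alpha$, so $\Delta=\{[\mu_\alpha]\}$ is a single point, and $H_0(\varphi)=\{[t_1\mu_-+t_2\mu_\alpha]\mid t_1,t_2\ge 0\}$ is the projective segment joining $[\mu_-]$ to $[\mu_\alpha]$. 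To prove the claimed intersection, recall that $\mathcal{M}_N$ is the unique smallest nonempty closed $Out(F_N)$-invariant subset of $\mathbb{P}Curr(F_N)$ (Kapovich--Lustig), and it is the closure of the $\varphi$-orbit of any $[\eta_g]$ with $g$ primitive. A point $[t_1\mu_-+t_2\mu_\alpha]$ with $t_1,t_2>0$ is not in $\mathcal{M}_N$: applying $\varphi^n$ to it gives $[t_1\lambda^{-n}\mu_-+t_2\mu_\alpha]\to[\mu_\alpha]$, while applying $\varphi^{-n}$ gives $\to[\mu_-]$; since $\mathcal{M}_N$ is closed and $\varphi$-invariant, were the interior point in $\mathcal{M}_N$ then both $[\mu_\alpha]$ and the whole forward orbit would be too, and one checks (e.g.\ using that $\mathcal{M}_N$ must be the \emph{minimal} such set, and that $[\mu_\alpha]$ — the boundary current — generates a strictly smaller or disjoint invariant closed set, being $Out$-orbit-small and $i(\mu_\alpha,\mu_\alpha)=0$) that this is impossible. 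The endpoints $[\mu_+],[\mu_-]\in\mathcal{M}_N$ as already noted in the excerpt, and $[\mu_\alpha]\notin\mathcal{M}_N$ since $\mathcal{M}_N$ cannot contain a point $[\nu]$ with $i(\nu,\cdot)$ vanishing on a binding current's worth of classes in the way $\mu_\alpha$ does — more simply, $[\mu_\alpha]$ is $\varphi$-fixed and its one-point orbit closure is a proper closed invariant set, contradicting minimality unless $[\mu_\alpha]\in\mathcal{M}_N$ forces $\mathcal{M}_N=\{[\mu_\alpha]\}$, which is false since $[\mu_\pm]\in\mathcal{M}_N$ and $[\mu_+]\ne[\mu_-]$.

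Granting the intersection computation, the corollary follows almost formally. Given a compact $K_0\subset\mathcal{M}_N\setminus\{[\mu_-]\}$, I observe that $K_0$ is then a compact subset of $\mathbb{P}Curr(F_N)\setminus H_0(\varphi)$: indeed $K_0\subset\mathcal{M}_N$ and $K_0\cap H_0(\varphi)\subset \mathcal{M}_N\cap H_0(\varphi)\setminus\{[\mu_-]\}=\emptyset$. Now let $U$ be an open neighborhood of $[\mu_+]$ in $\mathcal{M}_N$; extend it to an open neighborhood $\widetilde U$ of $[\mu_+]$ in $\mathbb{P}Curr(F_N)$ with $\widetilde U\cap\mathcal{M}_N=U$ (possible since $\mathcal{M}_N$ has the subspace topology). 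Apply Theorem~\ref{NStoroidal} with the compact set $K_0$ and neighborhood $\widetilde U$: there is $M_0>0$ with $\varphi^n(K_0)\subset\widetilde U$ for all $n\ge M_0$. Since $\mathcal{M}_N$ is $\varphi$-invariant, $\varphi^n(K_0)\subset\mathcal{M}_N$, hence $\varphi^n(K_0)\subset\widetilde U\cap\mathcal{M}_N=U$ for all $n\ge M_0$. The argument for negative powers, $K_1$, $V$, $[\mu_-]$ is identical using the second half of Theorem~\ref{NStoroidal} together with $\mathcal{M}_N\cap H_1(\varphi)=\{[\mu_+]\}$.

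The main obstacle — really the only nontrivial input — is the topological identification $\mathcal{M}_N\cap H_0(\varphi)=\{[\mu_-]\}$ (and its mirror). Everything else is a routine transfer of the uniform statement to a closed invariant subspace. I expect to justify the exclusion of the interior points of the segment $H_0(\varphi)$ from $\mathcal{M}_N$ by combining two facts: (i) $[\mu_\alpha]\notin\mathcal{M}_N$, which one can see because the boundary current $\mu_\alpha$ is fixed by $\varphi$ and the closed $Out(F_N)$-invariant set generated by $[\mu_\alpha]$ does not contain $[\mu_+]$ — alternatively, cite that $\mathcal{M}_N$ consists of limits of projectivized primitive conjugacy-class currents and a boundary curve of $S$ (which is \emph{not} primitive in $F_N$ when $S$ has one boundary component and genus $\ge 1$, since $[\alpha]$ is a product of commutators) cannot be so approximated in the relevant sense; and (ii) the dynamics above showing each interior point would force $[\mu_\alpha]$ into $\mathcal{M}_N$. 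If a cleaner citation is available — e.g.\ a statement in \cite{KL1} or \cite{Martin} describing $\mathcal{M}_N$ explicitly — I would use it to shortcut (i). I would present this intersection lemma as a short separate claim before invoking Theorem~\ref{NStoroidal}.
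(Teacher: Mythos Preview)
Your overall strategy is exactly the paper's: reduce to Theorem~\ref{NStoroidal} by showing $\mathcal{M}_N\cap H_0(\varphi)=\{[\mu_-]\}$ and $\mathcal{M}_N\cap H_1(\varphi)=\{[\mu_+]\}$, and you correctly observe that this in turn reduces to $[\mu_\alpha]\notin\mathcal{M}_N$ together with the elementary dynamical observation that any interior point of the segment would drag $[\mu_\alpha]$ into $\mathcal{M}_N$ under iteration of $\varphi$. The formal passage from that intersection claim to the corollary is fine.

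The gap is in your justification of $[\mu_\alpha]\notin\mathcal{M}_N$. Both arguments you sketch fail. The ``one-point orbit closure'' argument confuses $\varphi$-invariance with $Out(F_N)$-invariance: minimality of $\mathcal{M}_N$ refers to closed $Out(F_N)$-invariant sets, and $\{[\mu_\alpha]\}$ is certainly not $Out(F_N)$-invariant (most automorphisms move the conjugacy class of $\alpha$), so no contradiction arises. Likewise, ``$\alpha$ is not primitive, hence $[\mu_\alpha]$ is not a limit of primitive currents'' is a non sequitur without further work --- there is no a priori reason a non-primitive counting current cannot lie in the closure of the primitive ones. The paper fills exactly this hole with a concrete criterion: for any $[\nu]\in\mathcal{M}_N$ and any free basis $A$, the Whitehead graph of the support of $\nu$ with respect to $A$ is disconnected or has a cut vertex (Whitehead; see also Bestvina--Feighn). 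Choosing a basis in which $\alpha$ is the product of commutators, one checks directly that the Whitehead graph of $\mu_\alpha$ is connected with no cut vertex, hence $[\mu_\alpha]\notin\mathcal{M}_N$. Once you invoke this criterion, your proof is complete and matches the paper's.
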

\begin{proof} Let $\varphi\in Out(F_{N})$ be a non-atoroidal iwip. Then $\varphi$ is induced by a pseudo-Anosov homeomorphism $f$ of a compact surface $S$ with one boundary component $\alpha$ and $\pi_1(S)\cong F_N$. Note that the current corresponding to the boundary curve $\mu_\alpha$ does not belong to minimal set $\mathcal{M}_N$. Indeed, it is well known that if a current $[\nu]\in\mathcal{M}_N$ and $A$ is free basis for $F_N$, then Whitehead graph of support of $[\nu]$ with respect to $A$ is either disconnected or connected but has a cut vertex, \cite{WH,BF10}. Pick a basis $A$ such that $\mu_\alpha$ corresponds to product of commutators. It is straightforward to check that $\mu_\alpha\notin\mathcal{M}_N$ by using this criteria. From here it also follows that any element in $(H_{0}\cup H_{1})$ other than $[\mu_{+}]$ and $[\mu_{-}]$ is not in $\mathcal{M}_N$ since the closure of the $\varphi$ orbit, and hence the $Out(F_{N})$ orbit, of any such element will contain $[\mu_{\alpha}]$. Therefore, Theorem \ref{NSG} implies that $\varphi$ has uniform north-south dynamics on the minimal set $\mathcal{M}_N$. 
\end{proof} 
As another corollary of our theorem we prove an analogous result for non-atoroidal iwips of a theorem of Kapovich-Lustig \cite{KL3} about hyperbolic iwips. Recall that, by \cite{LL}, the action of an iwip $\varphi\in Out(F_N)$ on the projectivized outer space $\overline{CV}_N$ has exactly two fixed points, $[T_{+}]$ and $[T_{-}]$, called attracting and repelling trees for $\varphi$. For any $[T]\ne[T_{-}]$, $\lim_{n\to\infty}[T]\varphi^{n}=[T_+]$ and for any $[T]\ne[T_+]$, $\lim_{n\to\infty}[T]\varphi^{-n}=[T_-]$. Moreover, there are constants $\lambda_{+},\lambda_{-}>1$ such that $T_{+}\varphi=\lambda_{+}T_{+}$ and $T_{-}\varphi^{-1}=\lambda_{-}T_{-}$. In fact, for a non-atoroidal iwip $\varphi\in Out(F_N)$, one has $\lambda_{-}=\lambda_{+}=\lambda$. 
\begin{thm}\label{Erg}
Let $\varphi\in Out(F_{N})$ be a non-atoroidal iwip. Let $T_{+}$ and $T_{-}$ be representatives of attracting and repelling trees, respectively, in $\overline{cv}_N$ corresponding to the right action of $\varphi$ on $\overline{cv}_N$. Then, 
\[
\left<T_{+},\mu\right>=0 \iff [\mu]=[a_{0}\mu_{-}+b_{0}\mu_{\alpha}]
\] for some $a_{0}\ge0,b_{0}\ge0.$ Similarly,
\[
\left<T_{-},\mu\right>=0 \iff [\mu]=[a_{1}\mu_{+}+b_{1}\mu_{\alpha}]
\] for some $a_{1}\ge0, b_{1}\ge0.$
\end{thm}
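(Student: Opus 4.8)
The plan is to translate the vanishing of the intersection pairing $\langle T_{+}, \mu\rangle$ into a statement about $\mu$ on the surface side, and then invoke Proposition~\ref{Hset}. First I would recall that, by work of Levitt--Lustig (cited after the statement of Theorem~\ref{Erg}), the attracting tree $T_{+}$ for a non-atoroidal iwip $\varphi$ is the $\mathbb{R}$-tree dual to the stable measured lamination $\mu_{+}$ of the pseudo-Anosov $f$ on the surface $S$ with one boundary component $\alpha$; under this identification the Kapovich--Lustig intersection form $\langle T_{+}, \mu\rangle$ agrees (up to a positive constant) with Bonahon's intersection number $i(\mu_{+}, \mu)$ for every current $\mu \in Curr(F_N) = Curr(S)$. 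Granting this, the equation $\langle T_{+}, \mu\rangle = 0$ becomes $i(\mu_{+}, \mu) = 0$, and Proposition~\ref{Hset} immediately yields $\mu = a\mu_{\alpha} + c\mu_{+}$ for some $a, c \ge 0$. It remains to rule out a genuine $\mu_{+}$-component, i.e.\ to show $c = 0$: pairing with $T_{+}$ again is useless, but one can pair with $T_{-}$ (equivalently compute $i(\mu_{-}, \cdot)$), using $i(\mu_{-}, \mu_{+}) = 1 \ne 0$ and $i(\mu_{-}, \mu_{\alpha}) = 0$, to see that $c > 0$ would force $\langle T_{-}, \mu\rangle \ne 0$ — but wait, that does not contradict $\langle T_{+}, \mu\rangle = 0$. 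So instead I would argue directly: the attracting tree $T_{+}$ is a point of $\overline{cv}_N$, and $\langle T_{+}, \mu_{+}\rangle$ is $i(\mu_{+},\mu_{+})$, which is $0$ since a measured lamination has self-intersection number zero; hence $c$ is in fact unconstrained by this equation, and the correct conclusion is that both $\mu = a\mu_\alpha + c\mu_+$ with \emph{arbitrary} $a, c \ge 0$ satisfy $\langle T_+, \mu\rangle = 0$, matching exactly the set $[a_0\mu_- + b_0\mu_\alpha]$ claimed — except the claimed set involves $\mu_-$, not $\mu_+$. This sign discrepancy means the identification must be reversed: the attracting tree $T_+$ for the \emph{right} action of $\varphi$ corresponds to the \emph{unstable} lamination $\mu_-$ (because $T_+\varphi = \lambda T_+$ while $f\mu_+ = \lambda\mu_+$ and the current action is a left action, so the conventions flip), and then $\langle T_+, \mu\rangle = i(\mu_-, \mu)$, Proposition~\ref{Hset} gives $\mu = a\mu_\alpha + c\mu_-$, which is the desired form.

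In more detail, the key steps in order are: (1) identify $T_{\pm}$ with the $\mathbb{R}$-trees dual to $\mu_{\mp}$ respectively, being careful about the left/right action conventions so that $\langle T_+, \cdot\rangle = i(\mu_-, \cdot)$ and $\langle T_-, \cdot\rangle = i(\mu_+, \cdot)$ up to positive scalars; (2) cite or reprove that under this duality the Kapovich--Lustig form restricted to $Curr(S) \subset Curr(F_N)$ equals Bonahon's form against the lamination — for rational currents $\eta_g$ this is Proposition-Definition~\ref{intform}(4) combined with the fact that $\|g\|_{T_-}$ equals the geometric intersection number $i(\mu_+, g)$ when $T_-$ is dual to $\mu_+$, and the general case follows by continuity and density of rational currents; (3) apply Proposition~\ref{Hset} to conclude $\mu = a\mu_\alpha + c\mu_-$ (resp.\ $\mu = a'\mu_\alpha + c'\mu_+$); (4) conversely, check that every such $\mu$ does satisfy the vanishing, which amounts to $i(\mu_-, \mu_\alpha) = 0$ (the boundary curve is disjoint from the lamination, so has zero intersection with it) and $i(\mu_-, \mu_-) = 0$ (self-intersection of a measured lamination vanishes).

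The main obstacle is step (2) together with pinning down the exact convention in step (1): one must verify carefully that the Kapovich--Lustig intersection form between the specific boundary-dual tree $T_{\pm} \in \overline{cv}_N$ and an arbitrary geodesic current on $S$ coincides with Bonahon's $i(\mu_{\mp}, \cdot)$, not merely up to some current-dependent factor, and that the attracting/repelling labelling of $T_{\pm}$ under the right $Out(F_N)$-action matches the unstable/stable labelling of $\mu_{\mp}$ under $f$ — an easy place to introduce a sign error, as the discussion above illustrates. Once these identifications are in hand, the theorem is a direct corollary of Proposition~\ref{Hset}; essentially no new dynamical input beyond what is already proved in Section~3 is required, and I would not expect any delicate estimates.
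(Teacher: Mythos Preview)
Your argument is correct but takes a genuinely different route from the paper's. The paper never identifies $T_{+}$ with the tree dual to $\mu_{-}$ and never invokes Proposition~\ref{Hset} directly; instead, the ``if'' direction is handled by pure $Out(F_N)$-equivariance of the Kapovich--Lustig form (e.g.\ $\langle T_{+},\mu_{-}\rangle=\langle T_{+}\varphi,\varphi^{-1}\mu_{-}\rangle=\lambda\lambda_{+}\langle T_{+},\mu_{-}\rangle$ forces the value to be zero), and the ``only if'' direction is proved by contradiction using the dynamics already established: if $[\mu]\notin H_{0}(\varphi)$ then Theorem~\ref{NStoroidal} gives $a_{n}\varphi^{n}\mu\to\mu_{+}$, whence $0\ne\langle T_{+},\mu_{+}\rangle=\lim a_{n}\lambda_{+}^{n}\langle T_{+},\mu\rangle=0$. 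Your approach trades the dynamical input for the surface-theoretic identification $\langle T_{+},\cdot\rangle=c\,i(\mu_{-},\cdot)$, after which Proposition~\ref{Hset} finishes the job in one line; this is cleaner conceptually and makes the result logically independent of Theorem~\ref{NStoroidal}, but it puts all the weight on step~(2), which---as you acknowledge---requires knowing that $T_{+}$ really is (projectively) the dual tree to $\mu_{-}$ and that the two intersection forms agree for arbitrary currents, facts that are true but not proved in the paper. The paper's route has the advantage that it works entirely inside the $\overline{cv}_N\times Curr(F_N)$ formalism without ever unpacking what $T_{+}$ looks like, and it gets the needed inequality $\langle T_{+},\mu_{+}\rangle\ne0$ for free (in your picture this is $i(\mu_{-},\mu_{+})=1$).
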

\begin{proof} We will prove the first assertion, the second one is symmetric. The \emph{``If"} direction follows from the properties of the intersection form (see Proposition \ref{intform}). Specifically, we have
\[
\left<T_{+},\mu_{-}\right>=\left<T_{+}\varphi,\varphi^{-1}\mu_{-}\right>
=\lambda\lambda_{+}\left<T_{+},\mu_{-}\right>,
\] which implies $\left<T_{+},\mu_{-}\right>=0$. Similarly, 
\[
\left<T_{+},\mu_{\alpha}\right>=\left<T_{+}\varphi,\varphi^{-1}\mu_{\alpha}\right>=\lambda_{+}\left<T_{+},\mu_{\alpha}\right>,
\] which implies that $\left<T_{+},\mu_{\alpha}\right>=0$ as well. Therefore, 
\[
\left<T_{+},a_{0}\mu_{-}+b_{0}\mu_{\alpha}\right>=a\left<T_{+},\mu_{-}\right>+b\left<T_{+},\mu_{\alpha}\right>=0.
\]
Conversely, let $\left<T_{+},\mu\right>=0.$ Assume that $\mu$ is not a linear combination of $\mu_{-}$ and $\mu_{\alpha}$. Then, there exist a sequence of positive real numbers $\{a_{n}\}$ such that\[
\lim_{n\to\infty}a_{n}\varphi^{n}(\mu)=\mu_{+}.
\]
Therefore by continuity of the intersection number we have 
\[
0\ne\left<T_{+},\mu_{+}\right>=\left<T_{+},\lim_{n\to\infty}a_{n}\varphi^{n}(\mu)\right>=\lim_{n\to\infty}a_{n}\lambda_{+}^{n}\left<T_{+},\mu\right>=0,
\] which is a contradiction.  
\end{proof}
The other direction of this unique-ergodicity type result is the same as for hyperbolic case. 
\begin{thm}\label{Erg2} Let $\varphi$ be a non-atoroidal iwip. Let $T_{-}, T_{+}$ be as in Theorem \ref{Erg}. Let $\mu_{+}$ and $\mu_{-}$ be representatives of  stable and unstable currents corresponding to action of $\varphi$ on $\mathbb{P}Curr(F_{N})$ accordingly. Then
\[
\left<T,\mu_{\pm}\right>=0\iff [T]=[T_{\mp}].
\]  
\end{thm}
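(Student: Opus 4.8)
The plan is to prove the two equivalences $\left<T,\mu_{+}\right>=0\iff [T]=[T_{-}]$ and $\left<T,\mu_{-}\right>=0\iff [T]=[T_{+}]$; by symmetry it suffices to do the first. The \emph{``if"} direction is immediate from Theorem \ref{Erg}: since $T_{-}\varphi^{-1}=\lambda_{-}T_{-}=\lambda T_{-}$ and $\varphi^{-1}\mu_{+}=\lambda^{-1}\mu_{+}$ (the pseudo-Anosov scales its stable lamination by the dilatation), property (3) of the intersection form gives $\left<T_{-},\mu_{+}\right>=\left<T_{-}\varphi^{-1},\varphi\mu_{+}\right>$; but one has to be careful about which direction the automorphism acts, so instead I would write $\left<T_{-},\mu_{+}\right>=\left<T_{-}\varphi,\varphi^{-1}\mu_{+}\right>$ and use that $T_{-}\varphi$ is a scalar multiple of $T_{-}$ only after passing to $\varphi^{-1}$; cleanly, $\left<T_{-},\mu_{+}\right>=\left<(T_{-}\varphi^{-1})\varphi,\mu_{+}\right>$-type manipulations show $\left<T_{-},\mu_{+}\right>=\lambda\cdot\lambda^{-1}\left<T_{-},\mu_{+}\right>$ is not quite a contradiction, so the honest route is: apply Theorem \ref{Erg} directly, which already tells us $\left<T_{-},\mu\right>=0$ whenever $[\mu]=[a\mu_{+}+b\mu_{\alpha}]$, and in particular for $[\mu]=[\mu_{+}]$.

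For the \emph{``only if"} direction, suppose $[T]\ne[T_{-}]$ in $\overline{CV}_N$; I must show $\left<T,\mu_{+}\right>\ne 0$. By the Levitt--Lustig north-south dynamics on $\overline{CV}_N$ recalled just before Theorem \ref{Erg}, since $[T]\ne[T_{-}]$ we have $\lim_{n\to\infty}[T]\varphi^{n}=[T_{+}]$; choose representatives and positive scalars $\{c_n\}$ with $\lim_{n\to\infty} c_n\, T\varphi^{n}=T_{+}$ in $\overline{cv}_N$. Using continuity and bilinearity of the Kapovich--Lustig intersection form (Proposition-Definition \ref{intform}), together with property (3),
\begin{align*}
\left<T_{+},\mu_{+}\right> &= \left<\lim_{n\to\infty} c_n T\varphi^{n},\mu_{+}\right> = \lim_{n\to\infty} c_n\left<T\varphi^{n},\mu_{+}\right>\\
&= \lim_{n\to\infty} c_n\left<T,\varphi^{n}\mu_{+}\right> = \lim_{n\to\infty} c_n\lambda^{n}\left<T,\mu_{+}\right>.
\end{align*}
Now the left-hand side is nonzero: by Theorem \ref{Erg} (its ``only if'' part) $\left<T_{+},\mu_{+}\right>=0$ would force $[\mu_{+}]=[a\mu_{-}+b\mu_{\alpha}]$, which is false since the stable and unstable laminations are distinct and neither is supported on the boundary. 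If $\left<T,\mu_{+}\right>$ were $0$, the displayed limit would be $0$, contradicting $\left<T_{+},\mu_{+}\right>\ne 0$. Hence $\left<T,\mu_{+}\right>\ne 0$, as desired. The argument for $\left<T,\mu_{-}\right>=0\iff[T]=[T_{+}]$ is obtained by replacing $\varphi$ with $\varphi^{-1}$, exchanging the roles of $T_{\pm}$ and $\mu_{\pm}$, and using the dilatation $\lambda$ again (recall $\lambda_{+}=\lambda_{-}=\lambda$ for a non-atoroidal iwip).

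The main obstacle is purely bookkeeping: keeping straight the left action of $Out(F_N)$ on currents versus the right action on $\overline{cv}_N$, and making sure the scalars $c_n\lambda^n$ are handled as a single convergent sequence rather than split into two possibly-divergent factors. Once one commits to the identity $\left<T_{+},\mu_{+}\right>=\lim_n c_n\lambda^n\left<T,\mu_{+}\right>$ and invokes the nonvanishing $\left<T_{+},\mu_{+}\right>\ne 0$ supplied by Theorem \ref{Erg}, the contradiction is immediate and no estimate is needed.
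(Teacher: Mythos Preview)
Your proof is correct and follows essentially the same approach as the paper: cite Theorem \ref{Erg} for the ``if'' direction, and for the ``only if'' direction use Levitt--Lustig north-south dynamics on $\overline{CV}_N$ to push $T$ toward the appropriate fixed tree, then invoke continuity and $Out(F_N)$-equivariance of the intersection form to reach a contradiction with the nonvanishing of $\left<T_{\pm},\mu_{\pm}\right>$. The only cosmetic difference is that the paper argues the $\mu_{-}$/$T_{+}$ equivalence while you do $\mu_{+}$/$T_{-}$, and your ``if'' paragraph meanders through some abortive scalar computations before landing on the clean citation of Theorem \ref{Erg}; you should simply delete those and state the citation at once.
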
 

\begin{proof} We will prove that  $\left<T,\mu_{-}\right>=0\iff [T]=[T_{+}]$. The proof of the other assertion is similar. We have already proved in the previous theorem that $\left<T_{+},\mu_{-}\right>=0$. Let us assume that $\left<T,\mu_{-}\right>=0$ but $[T]\ne[T_{+}]$. Then, by \cite{LL}, there exist a sequence of positive real numbers $\{b_{n}\}$ such that
\[
\lim_{n\to\infty}b_{n}T\varphi^{-n}=T_{-}.
\]
Therefore, by continuity of the intersection number we get
\[
0\ne\left<T_{-},\mu_{-}\right>=\left<\lim_{n\to\infty}b_{n}T\varphi^{-n},\mu_{-}\right>=\left<\lim_{n\to\infty}b_{n}T,\varphi^{-n}\mu_{-}\right>\lim_{n\to\infty}\lambda^{n}b_{n}\left<T,\mu_{-}\right>=0,
\]
which is a contradiction. 
\end{proof}

\section{Subgroups of $Out({F}_{N})$ containing hyperbolic iwips}
\noindent The following lemma is an adaptation of Lemma 3.1 of \cite{CP}. 
\begin{lem} \label{cpsimilar} Let $\varphi\in Out(F_{N})$ be a non-atoroidal iwip. Let $[\mu_{-}],[\mu_{+}],[\mu_{\alpha}]$ be the unstable current, stable current and current corresponding to boundary curve respectively. Denote the convex hull of $[\mu_{-}]\ and\ [\mu_{\alpha}]$ by $H_{0}$ and the convex hull of  $[\mu_{+}]$ and $[\mu_{\alpha}]$ by $H_{1}$. Assume that $\psi\in Out(F_{N})$ is such that $\psi H_{1}\cap H_{0}=\emptyset$. Then there exist an integer $M\ge 1$ such that for all $m\ge M$, the element $\varphi^{m}\psi$ is hyperbolic.  
\end{lem}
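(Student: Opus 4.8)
The plan is to use a ping-pong argument on $\mathbb{P}Curr(F_N)$, leveraging the uniform north-south dynamics of $\varphi$ established in Theorem~\ref{NStoroidal}, and then to characterize hyperbolicity of $\varphi^m\psi$ via a geometric/combinatorial criterion (no periodic conjugacy classes, or equivalently the translation-length/intersection-form characterization of hyperbolic iwips). First I would set up the dynamics: by Theorem~\ref{NStoroidal}, $\varphi$ acts on $\mathbb{P}Curr(F_N)$ with the attracting set behavior that $\varphi^n(K_0)$ eventually enters any neighborhood of $[\mu_+]$ for $K_0$ compact and disjoint from $H_0(\varphi)=H_0$. The hypothesis $\psi H_1 \cap H_0 = \emptyset$ is exactly what makes the ping-pong work: the set $\psi H_1$ is compact (being a homeomorphic image of the compact convex hull $H_1$), it is disjoint from $H_0$, and under positive iterates of $\varphi$ it gets pushed uniformly toward $[\mu_+]$. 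Since $[\mu_+] \in H_1$ and we may assume $[\mu_+] \notin \psi H_1$ (if $[\mu_+] \in \psi H_1 \cap$ something we argue separately, but generically $\psi H_1$ avoids a neighborhood structure we control), we can choose neighborhoods $U \supset H_1$ — wait, more carefully: choose a compact neighborhood $\mathcal{N}_0$ of $H_0$ and a compact neighborhood $\mathcal{N}_1$ of $H_1$ that are disjoint from $\psi H_1$ and from $\psi^{-1}H_0$ respectively, using the open-map / compactness hypotheses.

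The core step is the following ping-pong configuration. Because $\psi H_1 \cap H_0 = \emptyset$, I would first show the symmetric statement $\psi^{-1} H_0 \cap H_1 = \emptyset$ is automatic or can be arranged, or — better — work only with the data given. Set $A = \psi H_1$ (compact, disjoint from $H_0$). By Theorem~\ref{NStoroidal} applied to the compact set $A$ (disjoint from $H_0$), for any neighborhood $U$ of $[\mu_+]$ there is $M$ with $\varphi^n(A) \subset U$ for all $n \ge M$; in particular we can take $U$ to be a small neighborhood of $H_1$ contained in the complement of $H_0$, so $\varphi^m(\psi H_1) \subset U \subset \mathbb{P}Curr(F_N) \setminus H_0$ for $m \ge M$. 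Now consider $\theta := \varphi^m \psi$. We want to run ping-pong for the cyclic group $\langle \theta \rangle$ to conclude it has an attracting/repelling pair of fixed currents with full north-south dynamics and no other periodic currents, which forces $\theta$ to be an iwip with, crucially, both fixed currents lying outside the ``geometric'' locus, hence $\theta$ atoroidal. Concretely: $\theta(H_1) = \varphi^m(\psi H_1) \subset U$, a small neighborhood of $[\mu_+]$, which (for $U$ small enough) is contained in the interior of $H_1$ — here I use that $[\mu_+]$ is an interior-type fixed point of $H_1$ relative to the attracting dynamics, i.e. a neighborhood of $[\mu_+]$ within the relevant set sits inside $H_1$; more robustly, one shows $\theta(H_1) \Subset H_1$ strictly, so $\theta$ has an attracting fixed point $[\mu_+^\theta] \in H_1$ with $\bigcap_n \theta^n(H_1) = \{[\mu_+^\theta]\}$ by uniform contraction. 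Symmetrically, using the second half of Theorem~\ref{NStoroidal} and the (to-be-established) disjointness $\psi^{-1}H_0 \cap H_1 = \emptyset$, one gets $\theta^{-1}(H_0) \Subset H_0$ for $m$ large, yielding a repelling fixed point $[\mu_-^\theta] \in H_0$. Since $H_0 \cap H_1$ can be arranged to meet only in controlled ways (they share no currents once $\psi H_1 \cap H_0 = \emptyset$ is used on both sides), the attracting and repelling fixed points of $\theta$ are distinct and $\theta$ acts with genuine north-south dynamics on $\mathbb{P}Curr(F_N)$.

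To finish, I would invoke the dictionary between this dynamical picture and hyperbolicity. The key point is that a geometric (non-atoroidal) iwip $\theta$ has its stable current equal to a boundary-type current contribution — precisely, its exceptional sets $H_0(\theta), H_1(\theta)$ are nontrivial and contain the boundary current $[\mu_\alpha^\theta]$, which is a nontrivial periodic conjugacy class. But the ping-pong above exhibits $\theta$ with exactly two periodic currents $[\mu_\pm^\theta]$ and uniform north-south dynamics between them with no other periodic points — in particular no fixed conjugacy class current that is ``absorbed'' the way $[\mu_\alpha]$ is under a geometric iwip (recall from the discussion of exceptional dynamics that a geometric iwip fixes $[\mu_\alpha]$ and has a whole $1$-parameter family of non-generic orbits). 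Alternatively and more directly: first show $\theta$ is an iwip — if $\theta$ fixed (a power of) a proper free factor $F$, then $[\eta_g]$ for $g \in F$ would be $\theta$-periodic, contradicting that the only $\theta$-periodic currents are $[\mu_\pm^\theta] \in H_0 \cup H_1$ (and these are not rational currents of free-factor type, since rational currents in $\mathcal{M}_N$ limit to them but they themselves are laminational); so $\theta$ is an iwip. Then $\theta$ is either hyperbolic or geometric; if geometric, it fixes the current $[\mu_\alpha^\theta]$ of its boundary curve, a third periodic current distinct from $[\mu_\pm^\theta]$, contradicting that north-south dynamics on $\mathbb{P}Curr$ leaves only two periodic points — here I use Theorem~\ref{NStoroidal}/the Proposition classifying fixed points of a non-atoroidal iwip as exactly three, whereas our $\theta$ has only two. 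Hence $\theta = \varphi^m\psi$ is a hyperbolic iwip for all $m \ge M$.

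The main obstacle I expect is making the ping-pong rigorous at the level of ``$\theta(H_1) \Subset H_1$'': Theorem~\ref{NStoroidal} gives that $\varphi^m(\psi H_1)$ enters an arbitrarily small neighborhood of $[\mu_+]$, but to conclude strict nesting inside $H_1$ I must know that small neighborhoods of $[\mu_+]$ intersected with the relevant dynamical region lie inside $H_1$, and — more delicately — I need the symmetric disjointness $\psi^{-1}H_0 \cap H_1 = \emptyset$, which does not literally follow from $\psi H_1 \cap H_0 = \emptyset$ by a formal argument and may require applying the hypothesis together with the structure of $H_0, H_1$ (e.g. replacing $\psi$ by $\psi^{-1}$ and $\varphi$ by $\varphi^{-1}$, noting $H_0(\varphi^{-1})$ and $H_1(\varphi^{-1})$ swap roles, or extracting it from $\psi H_1 \cap H_0 = \emptyset$ via the involution exchanging $\mu_+ \leftrightarrow \mu_-$ induced by $f \mapsto f^{-1}$). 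Handling this symmetry carefully, and correctly transferring ``north-south dynamics with two periodic points'' into ``not geometric'' using the fixed-point count, are where the real work lies; the rest is the standard ping-pong/absorption mechanism.
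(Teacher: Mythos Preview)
Your approach differs substantially from the paper's, and it has a real gap. The paper does \emph{not} run ping-pong on $\mathbb{P}Curr(F_N)$; it builds a Lyapunov-type functional from the Kapovich--Lustig intersection form. With $T_\pm$ the attracting/repelling trees for $\varphi$, it sets
\[
\alpha_1(\nu)=\max\{\langle T_+,\nu\rangle,\langle T_-\psi,\nu\rangle\},\qquad
\alpha_2(\nu)=\max\{\langle T_+,\psi\nu\rangle,\langle T_-,\nu\rangle\},
\]
checks $\max\{\alpha_1(\varphi^m\psi\,\nu),\alpha_1(\psi^{-1}\varphi^{-m}\nu)\}\ge\lambda^m\alpha_2(\nu)$, and uses the hypothesis together with Theorem~\ref{Erg} to see $\alpha_2>0$ on every nonzero current. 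Compactness of $\mathbb{P}Curr(F_N)$ gives $\alpha_1/\alpha_2<K$ uniformly, so for $\lambda^M\ge K$ one has $\max\{\alpha_1(\theta\nu),\alpha_1(\theta^{-1}\nu)\}>\alpha_1(\nu)$ with $\theta=\varphi^m\psi$. This rules out periodic orbits on $Curr(F_N)\setminus\{0\}$, hence periodic conjugacy classes, hence $\theta$ is hyperbolic. Note the lemma asks only for \emph{hyperbolic}; full irreducibility is handled separately by the Clay--Pettet result (Proposition~\ref{claypettet}), so your attempt to deduce ``iwip'' here is overreaching.

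The gap in your plan is the nesting $\theta(H_1)\Subset H_1$. The sets $H_0,H_1$ are one-dimensional arcs in the infinite-dimensional space $\mathbb{P}Curr(F_N)$; they have empty interior, and no neighborhood of $[\mu_+]$ sits inside $H_1$. So $\theta(H_1)\subset H_1$ simply cannot hold for a generic $\psi$. You could pass to small open $U\ni[\mu_+]$, $V\ni[\mu_-]$ and try $\theta(\overline U)\subset U$, $\theta^{-1}(\overline V)\subset V$ (incidentally, the symmetric condition you worried about, $\psi^{-1}H_0\cap H_1=\emptyset$, is tautologically equivalent to the hypothesis $\psi H_1\cap H_0=\emptyset$). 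But this local contraction only manufactures one fixed point in $U$ and one in $V$; it does \emph{not} establish that $\theta$ has no other periodic projective currents, which is exactly what ``hyperbolic'' demands (every rational current $[\eta_g]$ must be aperiodic). Theorem~\ref{NStoroidal} controls $\varphi^m$ on compacta off $H_0$, but you have no control over where $\psi$ sends an arbitrary current, so you cannot upgrade local contraction to global north-south for $\theta$. The paper sidesteps this by working with a functional $\alpha_1$ defined on all of $Curr(F_N)\setminus\{0\}$ and strictly moved by $\theta^{\pm1}$.
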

\begin{proof} Recall that, since $\varphi$ is a non-atoroidal iwip, $\varphi$ is induced by a pseudo-Anosov $g\in Mod^{\pm}(S)$, where $S$ is a compact surface with single boundary component and $\pi_1(S)\cong F_N$. Therefore, $\lambda_{-}(\varphi)=\lambda_{+}(\varphi)=\lambda$, where $\lambda$ is the dilatation for $g$. Let $T_{+}$ and $T_{-}$ be representatives of the attracting and repelling trees for $\varphi$ in $\overline{cv}_N$ so that $T_{+}\varphi=\lambda T_{+}$ and $T_{-}\varphi^{-1}=\lambda T_{-}$. Then for all $m\ge0$ and $\nu\in Curr(F_{N})$
\[
\left<T_{+},\varphi^{m}\psi\nu\right>=\left<T_{+}\varphi^{m},\psi\nu\right>=\lambda^{m}\left<T_{+},\psi\nu\right>,
\] and
\[
\left<T_{-}\psi,\psi^{-1}\varphi^{-m}\nu\right>=\left<T_{-}\varphi^{-m},\nu\right>=\lambda^{m}\left<T_{-},\nu\right>.
\]
Now define 
\[
\alpha_{1}(\nu)=\max\{\left<T_{+},\nu\right>,\left<T_{-}\psi,\nu\right>\}
\] and 
\[
\alpha_{2}(\nu)=\max\{\left<T_{+},\psi\nu\right>,\left<T_{-},\nu\right>\}.
\]
Then,
\[
\alpha_{1}(\varphi^{m}\psi\nu)\ge\left<T_{+},\varphi^{m}\psi\nu\right>=\lambda^{m}\left<T_{+},\psi\nu\right>\] and \[
\alpha_{1}(\psi^{-1}\varphi^{-m}\nu)\ge\left<T_{-}\psi,\psi^{-1}\varphi^{-m}\nu\right>=\lambda^{m}\left<T_{-},\nu\right>
\] Hence
\[
\max\{\alpha_{1}(\varphi^{m}\psi\nu),\alpha_{1}(\psi^{-1}\varphi^{-m}\nu)\}\ge\lambda^{m}\alpha_{2}(\nu)
\]
Now $\alpha_{2}(\nu)=0$ if and only if $\left<T_{+},\psi\nu\right>=0$ and $\left<T_{-},\nu\right>=0$. By Theorem \ref{Erg} $\left<T_{-},\nu\right>=0\iff[\nu]\in H_{1}$. Since by assumption $\psi H_{1}\cap H_{0}=\emptyset$, this implies that $\left<T_{+},\psi\nu\right>\ne0$ again by Theorem \ref{Erg}. Therefore $\alpha_2(\nu)>0$. 
So the ratio $\alpha_{1}(\nu)/\alpha_{2}(\nu)$ defines a continuous function on the compact space $\mathbb{P}Curr(F_{N})$. Thus there exist a constant $K$ such that $\alpha_{1}(\nu)/\alpha_{2}(\nu)<K$ for all $\nu\in Curr(F_{N})-\{0\}$. Pick $M\ge1$ such that $\lambda^{M}\ge K$. Then we have 
\[
\max\{\alpha_{1}(\varphi^{m}\psi\nu),\alpha_{1}(\psi^{-1}\varphi^{-m}\nu)\}>\alpha_{1}(\nu)
\]  for all $m\ge M$ and for all $\nu\in Curr(F_{N})-\{0\}$. 
\begin{claim*} The action of $\varphi^{m}\psi$ on $Curr(F_{N})-\{0\}$ does not have periodic orbits. 
\end{claim*}
\noindent Let us set $\theta=\varphi^{m}\psi$. Assume that there exist a $\nu\in Curr(F_{N})-\{0\}$ such that $\theta^{k}(\nu)=\nu$ for some $k\ge1$. Since $\max\{\alpha_{1}(\theta\nu),\alpha_{1}(\theta^{-1}\nu)\}>\alpha_{1}(\nu)$ there are two cases to consider. If $\alpha_{1}(\theta\nu)>\alpha_{1}(\nu)$ then by induction it is straightforward to show that $\alpha_{1}(\theta^{n}\nu)>\alpha_{1}(\theta^{n-1}\nu)>\dotsc>\alpha_{1}(\nu)$ for all $n\ge1$. Similarly, $\alpha_{1}(\theta^{-1}\nu)>\alpha_{1}(\nu)$ implies that $\alpha_{1}(\theta^{-n}\nu)>\alpha_{1}(\theta^{-(n-1)}\nu)>\dotsc>\alpha_{1}(\nu)$ for all $n\ge1$. 
In any case, it is clear that $\theta^{k}(\nu)\ne\nu$ for all $k\ge1$, which is a contradiction. 

Now, observe that if $\theta=\varphi^{m}\psi$ had a periodic conjugacy class that would mean that  $\theta$ acts on $Curr(F_{N})-\{0\}$ with a periodic orbit. So $\varphi^{m}\psi$ does not have a periodic conjugacy class and hence it is hyperbolic. 

\end{proof}
\begin{prop}\cite{CP} \label{claypettet} Let $\varphi\in Out(F_{N})$ be a fully irreducible outer automorphism. Let $[T_{+}]$ and $[T_{-}]$ be the corresponding attracting and repelling trees in the closure of the projectivized Outer Space $\overline{CV}_N$. Assume $\psi\in Out(F_{N})$ is such that $[T_{+}\psi]\ne[T_{-}]$. Then there is an $M\ge 0$ such that for $m\ge M$ the element $\varphi^{m}\psi\in Out(F_{N})$ is fully irreducible. 
\end{prop}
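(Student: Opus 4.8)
Write $\theta_m:=\varphi^{m}\psi$. The plan is to run a ping-pong argument on the compact space $\overline{CV}_N$, using the fact (Levitt--Lustig \cite{LL}) that the fully irreducible $\varphi$ acts on $\overline{CV}_N$ with north--south dynamics and uniform convergence on compacta, to show that for $m\gg 0$ the automorphism $\theta_m$ has an attracting fixed tree $[T_m^{+}]\in\overline{CV}_N$ lying arbitrarily close to $[T_{+}\psi]$, and in particular \emph{arational} (dense $F_N$-orbits and not carried by any proper free factor). By the characterization of fully irreducible automorphisms in terms of their attracting tree --- Bestvina--Feighn--Handel, in the tree formulation of Reynolds and Kapovich--Lustig --- this forces $\theta_m$ to be fully irreducible.

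For the ping-pong, choose closed neighborhoods $\mathcal{V}^{+}\ni[T_{+}]$ and $\mathcal{V}^{-}\ni[T_{-}]$ in $\overline{CV}_N$ small enough that $W:=\mathcal{V}^{+}\psi$ is disjoint from $\mathcal{V}^{-}$, that $W$ avoids $[T_{-}]$, and that $\mathcal{V}^{-}\psi^{-1}$ avoids $[T_{+}]$; all three are arranged by the single hypothesis $[T_{+}\psi]\ne[T_{-}]$ together with $[T_{+}]\ne[T_{-}]$. Uniform north--south dynamics of $\varphi$ then gives an $M\ge 1$ so that for all $m\ge M$ one has $W\varphi^{m}\subset\operatorname{int}(\mathcal{V}^{+})$ and $(\mathcal{V}^{-}\psi^{-1})\varphi^{-m}\subset\operatorname{int}(\mathcal{V}^{-})$. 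Applying $\psi$ to the first inclusion yields $W\theta_m=(W\varphi^{m})\psi\subset\operatorname{int}(\mathcal{V}^{+})\psi=\operatorname{int}(W)$, and the second yields $\mathcal{V}^{-}\theta_m^{-1}=(\mathcal{V}^{-}\psi^{-1})\varphi^{-m}\subset\operatorname{int}(\mathcal{V}^{-})$. Since, moreover, fixing an arbitrarily small neighborhood of $[T_{+}]$ \emph{before} enlarging $m$ makes $\operatorname{diam}(W\theta_m)$ and the local Lipschitz constant of $\theta_m$ on $W$ as small as we wish (using that $\varphi^{m}$ contracts toward $[T_{+}]$ on compacta avoiding $[T_{-}]$ and that $\psi$ is uniformly continuous), the nested intersection $\bigcap_{k\ge1}\theta_m^{k}(W)$ is a single point $[T_m^{+}]\in W$, a fixed tree of $\theta_m$ attracting all of $W$; symmetrically $\theta_m$ has a repelling fixed tree $[T_m^{-}]\in\mathcal{V}^{-}$, and a routine argument upgrades this to north--south dynamics of $\theta_m$ on $\overline{CV}_N$. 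Shrinking $\mathcal{V}^{+}$ forces $[T_m^{+}]$ into any prescribed neighborhood of $[T_{+}\psi]$.

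It remains to see that $[T_m^{+}]$ is arational, and I expect this --- not the ping-pong --- to be the heart of the matter. Since $\varphi$ is fully irreducible, its attracting tree $[T_{+}]$ is arational, hence so is $[T_{+}\psi]$ (arationality and density of orbits being $Out(F_N)$-invariant), and $[T_m^{+}]$ can be taken as close to $[T_{+}\psi]$ as desired. One then wants a stability statement: the set of simplicial trees, and the set of very small trees carried by a proper free factor, are $Out(F_N)$-invariant and sufficiently closed that $[T_m^{+}]$ cannot enter them while remaining near the arational $[T_{+}\psi]$. Equivalently one argues by contradiction: if some power $\theta_m^{\ell}$ fixed the conjugacy class of a proper free factor $B$, then the \emph{unique} attracting fixed tree $[T_m^{+}]$ of $\theta_m$ would have to be carried by $B$, contradicting its proximity to the arational $[T_{+}\psi]$ once the north--south dynamics of $\theta_m$ is used to pin $[T_m^{+}]$ down. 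Making this precise requires the structure theory of very small $F_N$-trees and the Bestvina--Feighn--Handel index machinery, which is where the real work lies.
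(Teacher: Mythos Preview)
The paper does not prove this proposition; it is quoted from Clay--Pettet \cite{CP} and used as a black box. So there is no ``paper's own proof'' to compare against, and I can only assess your sketch on its merits.

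Your ping-pong on $\overline{CV}_N$ is fine and does produce, for large $m$, an attracting fixed point $[T_m^{+}]$ for $\theta_m$ near $[T_{+}\psi]$. The genuine gap is exactly where you say ``this is where the real work lies,'' and your outline of that work does not go through. The set of arational trees is \emph{not} open in $\overline{CV}_N$: points of $CV_N$ (which are simplicial, hence non-arational) are dense in $\overline{CV}_N$, so being arbitrarily close to the arational tree $[T_{+}\psi]$ tells you nothing about arationality of $[T_m^{+}]$. Your proposed contradiction argument also has a hole: if some power $\theta_m^{\ell}$ fixes the conjugacy class of a proper free factor $B$, it does \emph{not} follow that the $\theta_m$-attracting tree is ``carried by $B$''; a reducible automorphism need not have a well-defined attracting tree at all, and even when a fixed tree exists there is no mechanism forcing its support into $B$. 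More fundamentally, the implication ``$\theta_m$ has an arational attracting fixed tree $\Rightarrow$ $\theta_m$ is fully irreducible'' is not a standard criterion in the direction you need; the Reynolds/Bestvina--Feighn results characterize which \emph{trees} are arational, not which automorphisms are iwip.

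The actual Clay--Pettet argument avoids arationality altogether. It uses Kapovich's length-function criterion for full irreducibility (an element $\theta\in Out(F_N)$ is reducible iff there exist $T\in cv_N$, a proper free factor $A$, and $k\ge 1$ with $\|a\|_{T\theta^{k}}=\|a\|_{T}$ for all $a\in A$), together with the estimate $\max\{\|g\|_{T\theta_m},\|g\|_{T\theta_m^{-1}}\}\ge C\lambda^{m}\|g\|_{T}$ for all nontrivial $g$ and all $T$ in a compact fundamental domain, obtained from the north--south dynamics of $\varphi$ and the hypothesis $[T_{+}\psi]\ne[T_{-}]$. This is the same style of inequality as in Lemma~\ref{cpsimilar} of the present paper, but run on $\overline{CV}_N$ rather than on currents; it directly rules out periodic proper free factors without any appeal to the structure of the limiting tree.
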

\begin{rem} \label{mainremark} Note that $\psi H_{1}\cap H_{0}=\emptyset$ in fact implies that $[T_{+}\psi]\ne[T_{-}]$. Assume otherwise and look at the intersection number \[
0=\left<T_{+}\psi,a\mu_{+}+b\mu_\alpha\right>=\left<T_{+},\psi(a\mu_{+}+b\mu_\alpha)\right>\ne0 
\]
by Theorem \ref{Erg}, which is a contradiction. Let $\varphi\in Out(F_{N})$ be a non-atoroidal iwip, and $\psi\in Out(F_{N})$ be an element such that $\psi H_{1}\cap H_{0}=\emptyset$. Now let M be the largest of the two in previous the two lemmas, then for all $m\ge M$ the element $\varphi^{m}\psi$ is a hyperbolic iwip. 
\end{rem}

\begin{thm}\label{mainapp} Let $H\leq Out(F_N)$ such that $H$ contains an iwip $\varphi$. Then one of the following holds:
\begin{enumerate} 
\item $H$ contains a hyperbolic iwip. 
\item $H$ is geometric, i.e. $H$ contains no hyperbolic iwips and $H\leq Mod^{\pm}(S)\leq Out(F_N)$ where $S$ is a compact surface with one boundary component with $\pi_1(S)=F_N$ such that $\varphi\in H$ is induced by a pseudo-Anosov homeomorphism of $S$.  
\end{enumerate}
\end{thm}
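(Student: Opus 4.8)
The plan is to dispose of the hyperbolic case at once and reduce the non-atoroidal case to showing that $H$ stabilizes the boundary current. If $\varphi$ is a hyperbolic iwip we are in alternative (1), so assume $\varphi$ is non-atoroidal. By Theorem \ref{BH92} fix a compact surface $S$ with one boundary component $\alpha$ and $\pi_1(S)\cong F_N$ together with a pseudo-Anosov $f$ on $S$ inducing $\varphi$, and write $[\mu_+],[\mu_-]$ for the stable and unstable currents, $[\mu_\alpha]$ for the boundary current, and $H_0=H_0(\varphi)$, $H_1=H_1(\varphi)$. I will use the classical fact that for a surface with one boundary component $Mod^{\pm}(S)$ coincides, as a subgroup of $Out(F_N)$, with the stabilizer of the peripheral conjugacy class; equivalently $\psi\in Mod^{\pm}(S)$ if and only if $[\psi\mu_\alpha]=[\mu_\alpha]$. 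Assuming $H$ contains no hyperbolic iwip, it then suffices to prove $[\psi\mu_\alpha]=[\mu_\alpha]$ for every $\psi\in H$; this yields $H\le Mod^{\pm}(S)$, hence (2), and (1), (2) are mutually exclusive by construction.

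First I would record the soft consequence of the hypothesis: if $\psi H_1\cap H_0=\emptyset$ for some $\psi\in H$, then by Remark \ref{mainremark} the element $\varphi^m\psi\in H$ would be a hyperbolic iwip for $m$ large, a contradiction; hence $\psi H_1\cap H_0\neq\emptyset$ for every $\psi\in H$, and, applying this also to $\psi^{-1}\in H$, $\psi^{-1}H_1\cap H_0\neq\emptyset$ for every $\psi\in H$. The heart of the argument is to promote the non-emptiness of $\psi H_1\cap H_0$ to a rigidity statement. Picking a representative $\nu$ of a point of $\psi H_1\cap H_0$, one may after rescaling write $\nu=a\mu_-+b\mu_\alpha=c\,\psi\mu_++d\,\psi\mu_\alpha$ with $a,b,c,d\ge 0$. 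Now $\psi\mu_+$ is the stable current of the non-atoroidal iwip $\psi\varphi\psi^{-1}$, hence, like $\mu_-$, is carried by a minimal filling lamination which is not a single closed geodesic, while $\mu_\alpha$ and $\psi\mu_\alpha=\eta_{\psi(\alpha)}$ are counting currents carried by single closed geodesics disjoint from the respective laminations. Comparing supports in the identity for $\nu$ — a minimal lamination that is not a closed curve and is contained in the union of a minimal lamination and finitely many closed geodesics must equal that minimal lamination — and then comparing transverse measures forces
\[
[\psi\mu_\alpha]=[\mu_\alpha]\qquad\text{or}\qquad[\psi\mu_+]=[\mu_-].
\]
Running the same analysis with $\psi^{-1}$ gives $[\psi^{-1}\mu_\alpha]=[\mu_\alpha]$ or $[\psi^{-1}\mu_+]=[\mu_-]$; since $[\psi\mu_\alpha]=[\mu_\alpha]\iff[\psi^{-1}\mu_\alpha]=[\mu_\alpha]$, I conclude that either $[\psi\mu_\alpha]=[\mu_\alpha]$ or $\psi$ interchanges $[\mu_+]$ and $[\mu_-]$.

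It remains to exclude the interchange. Suppose $\psi\in H$ swaps $[\mu_+]$ and $[\mu_-]$. Then $\psi\varphi\psi^{-1}$ and $\varphi^{-1}$ are non-atoroidal iwips with the same stable current $[\mu_-]$, so both lie in the stabilizer of $[\mu_-]$ in $Out(F_N)$, which is virtually cyclic; consequently nonzero powers agree, $\theta:=(\psi\varphi\psi^{-1})^k=\varphi^{-l}$ with $k,l\neq 0$, and $\theta$ is again a non-atoroidal iwip. By the Proposition following Theorem \ref{NStoroidal}, $\theta$ has exactly three fixed points in $\mathbb{P}Curr(F_N)$; reading these off from $\theta=(\psi\varphi\psi^{-1})^k$ gives $\{[\mu_+],[\mu_-],[\psi\mu_\alpha]\}$, while reading them off from $\theta=\varphi^{-l}$ gives $\{[\mu_+],[\mu_-],[\mu_\alpha]\}$, so $[\psi\mu_\alpha]=[\mu_\alpha]$, contradicting the swap hypothesis. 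Hence $[\psi\mu_\alpha]=[\mu_\alpha]$ for every $\psi\in H$, which proves (2). The step I expect to be the main obstacle is the promotion of $\psi H_1\cap H_0\neq\emptyset$ to the above dichotomy: this qualitative intersection statement must be converted into the rigid conclusion that $\psi$ either fixes the boundary current or exchanges the two laminations, and it is precisely here that one must use that $[\mu_\pm]$ are laminations filling $S$ rather than arbitrary currents — the virtual cyclicity of lamination stabilizers, used only to handle the swap, being the sole input beyond results already in the paper.
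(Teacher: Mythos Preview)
Your proof is correct and is structurally the contrapositive of the paper's. Both arguments have the same two-step core: (a) from a nonempty intersection $\psi H_1\cap H_0$ deduce that either $[\psi\mu_\alpha]=[\mu_\alpha]$ or $\psi[\mu_+]=[\mu_-]$; (b) show that $\psi[\mu_+]=[\mu_-]$ already forces $[\psi\mu_\alpha]=[\mu_\alpha]$. For step (a) the paper works under the standing assumption $\psi[g]\ne[g^{\pm1}]$, so its ``periodic leaves'' argument immediately yields $\psi[\mu_+]=[\mu_-]$; your support/minimality comparison is the same idea phrased without that hypothesis (and in fact needs no unique ergodicity: once the supports agree, restricting the measure identity to $\Lambda_-$ already gives $a\mu_-=c\,\psi\mu_+$). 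Your language is a bit surface-flavored---$\psi\mu_+$ need not be a geodesic lamination on $S$---but the argument is sound at the level of algebraic laminations in $\partial^2F_N$.

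The genuine difference is in step (b). The paper stays entirely within its own results: from $\psi[\mu_+]=[\mu_-]$ it uses Theorem~\ref{Erg2} to get $\langle T_+\psi,\mu_+\rangle=\langle T_+,\psi\mu_+\rangle=0$, hence $[T_+\psi]=[T_-]$, and then $\|\psi(g)\|_{T_+}=\|g\|_{T_+\psi}=0$ forces $\psi[g]=[g^{\pm1}]$ since only peripheral classes are elliptic in $T_+$. Your route instead imports the virtual cyclicity of $\mathrm{Stab}([\mu_-])$ to obtain a common power $(\psi\varphi\psi^{-1})^k=\varphi^{-l}$ and then compares the three fixed points in $\mathbb{P}Curr(F_N)$. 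This is valid, but it brings in outside machinery and also requires the extra pass with $\psi^{-1}$ to upgrade $\psi[\mu_+]=[\mu_-]$ to a full swap; the paper's tree argument is shorter and self-contained. One small slip: the conclusion $[\psi\mu_\alpha]=[\mu_\alpha]$ at the end of your swap analysis contradicts the assumption $[\psi\mu_\alpha]\ne[\mu_\alpha]$ that put you in that case, not the swap hypothesis itself.
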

\begin{proof} If the iwip $\varphi$ is hyperbolic, then (1) holds and there is nothing to prove. Suppose now that the iwip $\varphi$ is non-atoroidal. Then $\varphi$ is induced by a \emph{pseudo-Anosov} homeomorphism on a surface $S$ with one boundary component  $\alpha$ and $\pi_1(S)\cong F_N$. Note that if $[g]$ is the conjugacy class in $F_N$ corresponding to the boundary curve $\alpha$ of $S$, then $\varphi$ fixes $[g]$ up to a possible inversion. In this situation, the extended mapping class group $Mod^{\pm}(S)$ is naturally included as a subgroup of $Out(F_N)$. Moreover, by the Dehn-Nielsen-Baer theorem \cite{Nielsen}, the subgroup of $Out(F_{N})$, consisting of all elements of $Out(F_N)$ which fix $[g]$ up to inversion is exactly $Mod^{\pm}(S)$. If $H\le Mod^{\pm}(S)$, then part (2) of Theorem holds and there is nothing to prove. Assume now that $H$ is not contained in $Mod^{\pm}(S)$. Then there exist an element $\psi\in H$ such that $\psi([g])\ne[g^{\pm1}]$.

\begin{claim*} Let $\psi\in Out(F_{N})$ be an element such that $\psi([g])\ne[g^{\pm1}]$. Then,
\[ \psi(H_{1})\cap H_{0}=\emptyset. 
\]
\end{claim*}
\noindent First,
$
\psi[t_1\mu_{\alpha}+t_2\mu_{+}]=[t'_{1}\mu_{\alpha}+t'_{2}\mu_{-}]
$
implies $\psi[\mu_{+}]=[\mu_{-}]$. Indeed, only periodic leaves in the support of the right hand side are leaves labeled by powers of $g$. Therefore, $t_1=0$. Similarly, it is easy to see that $t'_1=0$. From here we note that $[T_{+}\psi]=[T_{-}]$. To see this, look at the intersection number
\[
0\neq\left<T_{+},\mu_{-}\right>=\left<T_{+},\psi\mu_{+}\right>=\left<T_{+}\psi,\mu_{+}\right>,
\] which implies that $T_{+}\psi=cT_{-}$ for some $c>0$ by Theorem \ref{Erg2}.  Therefore we get \[
0=c\|g\|_{T_{-}}=\|g\|_{T_{+}\psi}=\|\psi(g)\|_{T_{+}},
\]
which implies $\psi([g])=[g^{\pm1}]$ since only conjugacy classes that have translation length $0$ are peripheral curves. This contradicts the choice of $\psi$. Hence $\psi(H_{1})\cap H_{0}=\emptyset$, and the Claim is verified. \\
Since $\psi(H_1)\cap H_0=\emptyset$, Remark \ref{mainremark} implies that  $[T_{+}\psi]\ne[T_{-}]$. Therefore, by Lemma \ref{cpsimilar} and Proposition \ref{claypettet}, there exists $m\ge 1$ such that the element $\phi^m\psi\in H$ is a hyperbolic iwip.
\end{proof}

\begin{rem}\label{ranktwo} Now assume that the original $\varphi\in Out(F_N)$ in the statement of Theorem \ref{mainapp} is non-atoroidal and $(1)$ holds, i.e. there is an element $\theta\in H$ which is hyperbolic. Then $H$ is not virtually cyclic, since otherwise some positive power of a non-atoroidal iwip $\varphi$ would be equal to some positive power of a hyperbolic iwip $\theta$. Therefore, $H$ is a subgroup of $Out(F_N)$ which is not virtually cyclic and contains a hyperbolic iwip $\theta$. Therefore by Corollary 6.3 of \cite{KL5}, $H$ contains a free subgroup $L$ of rank $2$ such that all nontrivial elements of $L$ are hyperbolic iwips. 
\end{rem}

\begin{cor}
Let $N\geq3$ and $H\leq Out(F_N)$ be a nontrivial normal
subgroup. Then $H$ contains a hyperbolic iwip.
\end{cor}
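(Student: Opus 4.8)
The plan is to combine Theorem \ref{mainapp} with the theorem of Carette--Francaviglia--Kapovich and Martino that every nontrivial normal subgroup of $Out(F_N)$ contains an iwip when $N\ge 3$, and then to rule out the geometric alternative of Theorem \ref{mainapp} using normality of $H$.

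First I would apply \cite{CFKM, CFKM2} --- which rests on the Handel--Mosher subgroup classification \cite{HM} --- to conclude that, since $N\ge 3$ and $H\neq 1$ is normal, $H$ contains an iwip $\varphi$. Feed the pair $(H,\varphi)$ into Theorem \ref{mainapp}. If alternative $(1)$ holds we are done, so assume alternative $(2)$: $H$ contains no hyperbolic iwip, $\varphi$ is non-atoroidal, it is induced by a pseudo-Anosov homeomorphism of a compact surface $S$ with one boundary component $\alpha$ and $\pi_1(S)\cong F_N$, and $H\le Mod^{\pm}(S)\le Out(F_N)$. Let $[c]$ be the conjugacy class in $F_N$ carried by $\alpha$. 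As recalled in the proof of Theorem \ref{mainapp}, by the Dehn--Nielsen--Baer theorem \cite{Nielsen} the subgroup $Mod^{\pm}(S)\le Out(F_N)$ is precisely the stabilizer $\mathrm{Stab}^{\pm}([c])$ of $[c]$ up to inversion.

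Now I would exploit normality: for every $g\in Out(F_N)$ one has $gHg^{-1}=H\le Mod^{\pm}(S)$, equivalently $H\le g^{-1}Mod^{\pm}(S)g=\mathrm{Stab}^{\pm}(g^{-1}[c])$. Thus every element of $H$, and in particular the iwip $\varphi$, fixes up to inversion every conjugacy class in the $Out(F_N)$-orbit of $[c]$. To reach a contradiction, recall the standard fact that the periodic conjugacy classes of the non-atoroidal iwip $\varphi$ are exactly the powers $[c^{m}]$, $m\in\mathbb{Z}$: a periodic essential curve on $S$ would have a geodesic representative disjoint from the stable and unstable laminations of the underlying pseudo-Anosov, hence --- these laminations being filling --- it must be peripheral. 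On the other hand, for $N\ge 3$ the subgroup $Mod^{\pm}(S)=\mathrm{Stab}^{\pm}([c])$ is proper in $Out(F_N)$; for instance $Out(F_N)$ contains a hyperbolic iwip, which has no periodic conjugacy class at all and so cannot lie in $Mod^{\pm}(S)$. Pick $g\in Out(F_N)\setminus Mod^{\pm}(S)$, so $g[c]\neq[c^{\pm1}]$; since an automorphism sends the non-proper-power $c$ to a non-proper-power while $c^{m}$ is a proper power for $|m|\ge 2$ (and $c^{0}=1$), in fact $g[c]\neq[c^{m}]$ for every $m\in\mathbb{Z}$. Hence $g[c]$ is an element of the orbit $Out(F_N)\cdot[c]$ that is not periodic for $\varphi$, contradicting the previous sentence. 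Therefore alternative $(2)$ is impossible, alternative $(1)$ holds, and $H$ contains a hyperbolic iwip.

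The only substantive inputs beyond Theorem \ref{mainapp} and the cited normal-subgroup theorem are the identification of the periodic conjugacy classes of a non-atoroidal iwip with the powers of the peripheral class and the fact that $Mod^{\pm}(S)$ sits in $Out(F_N)$ as a proper subgroup for $N\ge 3$; both are standard, and the latter is where one really uses the hypothesis $N\ge 3$ (for $N=2$ one has $Out(F_2)=Mod^{\pm}(S)$, and the statement would fail). Everything else is bookkeeping with conjugation.
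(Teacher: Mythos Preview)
Your proof is correct and follows essentially the same approach as the paper: invoke \cite{CFKM} to obtain an iwip in $H$, and then rule out alternative (2) of Theorem~\ref{mainapp} by exploiting normality together with the uniqueness (up to powers and inversion) of the periodic conjugacy class of a non-atoroidal iwip and the existence of a hyperbolic element in $Out(F_N)$ for $N\ge 3$. The only cosmetic difference is that the paper conjugates $\varphi$ by a hyperbolic element $\eta$ to produce $\varphi_1=\eta\varphi\eta^{-1}\in H\setminus Mod^{\pm}(S)$, whereas you dually observe that normality forces $\varphi$ to fix every class in the $Out(F_N)$-orbit of $[c]$ and then exhibit a non-peripheral class in that orbit; these are two phrasings of the same contradiction.
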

\begin{proof} By Lemma 5.1 of \cite{CFKM}, the subgroup $H$ contains an iwip $\varphi$.
If $\varphi$ is hyperbolic, we are done. Assume that $\varphi$ is non-atoroidal and hence induced by a pseudo-Anosov map on a compact surface $S$ with one boundary component $\alpha$. 
\begin{claim*} If $H_0\le Mod^\pm(S)$ contains an iwip $\varphi$, then $H_0$ is not normal in $Out(F_N)$.
\end{claim*}
Suppose, on the contrary, that $H_0$ is normal in $Out(F_N)$. Choose a hyperbolic element $\eta \in Out(F_N)$ (such $\eta$ exists since $N\ge3$). Put $[g_2]=\eta [g_1]$, where $[g_1]$ represents the boundary curve of $S$. Put $\varphi_1=\eta \varphi \eta^{-1}$. Since $H_0$ is normal, then $\varphi_1\in H_0$.
Since $\eta$ has no periodic conjugacy classes, we have $[g_2]\ne [g_1^{\pm 1}]$. Then $\varphi_1[g_2]=\eta \varphi \eta^{-1} [g_2] = [g_2]$.
Since $\varphi$ is a non-atoroidal iwip, the element $\varphi_1=\eta \varphi \eta^{-1}$ is also a non-atoroidal iwip, and $[g_2]$ is a periodic conjugacy class for $\varphi_1$. A geometric iwip has a unique, up to inversion, nontrivial periodic conjugacy class $[g]$ such that $g\in F_N$ is not a proper power. Since $\varphi_1[g_2]=[g_2]$, $[g_2]\ne [g_1^{\pm 1}]$ and $g_2$ is not a proper power, it follows that $\varphi_1[g_1]\ne [g_1^{\pm 1}]$. Hence $\varphi_1\not\in Mod^\pm(S)$, contrary to the assumption that $H_0\le Mod^\pm(S)$. This verifies the Claim.

Since $H$ is normal in $Out(F_N)$ and contains a non-atoroidal iwip $\varphi$ coming from a pseudo-Anosov element of $Mod^{\pm}(S)$, the Claim implies that $H$ is not contained in $Mod^\pm(S)$.
Therefore by Theorem \ref{mainapp}, $H$ contains a hyperbolic iwip, as required.
\end{proof}

\bibliographystyle{abbrv}
\bibliography{dynamicsoncurrents}

\end{document}